\documentclass[12pt,reqno,amscd,amssymb,pstricks,latexsym,amsbsy,xypic,mathrsfs,verbatim]{amsart}
\usepackage{amsfonts,amssymb,amsthm}
\usepackage{amsmath,amscd}
\usepackage{pstricks}
\usepackage{pst-node}
\usepackage{mathrsfs}
\usepackage[all]{xy}
\usepackage[enableskew,vcentermath]{youngtab}
\usepackage{hyperref}

\renewcommand{\subsection}[1]{\vspace{.18in}\par\noindent\addtocounter{subsection}{1}
\setcounter{equation}{0}{\bf\thesubsection.\hspace{5pt}#1}}

\newtheorem{theorem}{Theorem}[section]
\numberwithin{equation}{theorem}

\theoremstyle{definition}
\newtheorem{Example}[theorem]{Example}%[section]
\newtheorem{Examples}[theorem]{Examples}
\newtheorem{Rem}[theorem]{Remark}

\theoremstyle{plain}
\newtheorem{Prop}[theorem]{Proposition}
\newtheorem{Thm}[theorem]{Theorem}

\newtheorem{Lem}[theorem]{Lemma}
\newtheorem{Coro}[theorem]{Corollary}

\newcommand{\scr}[1]{\mathscr #1}

\def\lan{\langle}    \def\ran{\rangle}
\def\lr#1{\langle #1\rangle}

\def\az{\alpha}  
\def\bz{\beta}  \def\dt{\Delta}

\def\ggz{\Gamma}  
\def\sz{\sigma}

\def\vez{\varepsilon}  
\def\dz{\delta}

\def\lz{\lambda}
      \def\cF{{\mathcal F}}
\def\cC{{\mathcal C}}  \def\cX{{\mathcal X}}   
\def\cH{{\mathcal H}}

  \def\cF{{\mathcal F}}  \def\cT{{\mathcal T}}
\def\cA{{\mathcal A}}    
  \def\scrC{{\scr C}}  \def\scrR{{\scr R}}
   \def\scrE{{\scr E}}  
 
\def\bbN{{\mathbb N}}  \def\bbZ{{\mathbb Z}}  
  \def\bbI{{\mathbb I}}  \def\bbF{{\mathbb F}}
\def\bbC{{\mathbb C}}    \def\bbP{{\mathbb P}}
\def\bbA{{\mathbb A}}  \def\scrR{{\scr R}}
         
      \def\vphi{\varphi}
        \def\bfd{{\bf d}} 
   
  \def\leq{\leqslant}  \def\geq{\geqslant}
 
\def\lra{\longrightarrow}   
\def\lmto{\longmapsto}   
\def\ra{\rightarrow}
\def\Hom{\mbox{\rm Hom}}  
       \def\im{\mbox{\rm Im}\,}
\def\Ext{\mbox{\rm Ext}\,}   \def\Ker{\mbox{\rm Ker}\,}
\def\dim{\mbox{\rm dim}\,}   \def\End{\mbox{\rm End}}
\def\Aut{\mbox{\rm Aut}}     \def\id{\mbox{\rm id}}
\def\udim{{\mathbf{ dim\,}}} 
\def\mod{\mbox{\rm -mod}}  \def\tr{\mbox{\rm tr}}
  
\def\rep{\mbox{\rm Rep}\,}  
\def\reg{\mbox{\rm Reg}\,}  \def\prim{{\rm prim}}
\def\fkg{{\frak g}}

\def\fkO{{\frak O}}

  \def\bfd{{\bf d}}

\def\supp{{\rm supp}} \def\mult{{\rm mult}}

\def\wt{\widetilde}

\def\Rep{{\rm Rep}}
\def\wh{\widehat}

\def\bfd{{\bf d}} \def\bfx{{\bf x}}  \def\bfy{{\bf y}}

\def\spec{{\rm Spec}}
\def\add{{\rm add\,}}
\def\tail{{\rm t}} \def\head{{\rm h}}

\def\one{{\boldsymbol 1}}
\def\ofq{{\overline{\bbF}}_q}
\def\re{{\rm re}} \def\im{{\rm im}}
\def\reg{{\rm reg}}
\def\GL{{\rm GL}}

\begin{document}

\title[Primitive elements in Ringel--Hall algebras]
{Primitive elements in Ringel--Hall algebras \\ of tame hereditary algebras}
\author{Bangming Deng and Weihao Li}
\address{Department of Mathematical Sciences, Tsinghua University,
Beijing 100084,  China.} \email{bmdeng@tsinghua.edu.cn }
\email{li-wh20@mails.tsinghua.edu.cn}
%\address{Department of Mathematical Sciences, Tsinghua University,
%Beijing 100084,  China.} \email{li-wh20@mails.tsinghua.edu.cn}

%\thanks{Supported by the Natural Science Foundation of China (Grant No. 12031007).}

%\date{\today}

\subjclass[2000]{16G20, 16G60, 17B37}

\begin{abstract}
We study primitive elements in the Ringel--Hall algebra $\cH(A)$
of an algebra $A$ over a finite field associated with a quiver with automorphism.
When $A$ is a tame hereditary algebra, we give a description
of primitive elements in $\cH(A)$ which generalizes and improves a result
of Hennecart (IMRN 2021) \cite{Hen} for tame quivers. Moreover, we
obtain an identity concerning primitive elements in the subalgebra of $\cH(A)$
generated by regular $A$-modules which enables us to construct an explicit
basis for the space of primitive elements in $\cH(A)$.
\end{abstract}
\maketitle

\section{Introduction}

The classical Hall algebra introduced by Steinitz \cite{St} and later by Hall
\cite{Ha} is isomorphic to the ring of symmetric functions
and plays an important role in the study of representations of symmetric
groups and general linear groups; see \cite{Mac}. It is a graded Hopf algebra
generated by primitive elements.

In 1990s, Ringel \cite{R90a} generalized the approach of Steinitz and Hall and
constructed an associative algebra from the category of finite length modules
over a finitary ring. It turns out that these algebras, called Ringel--Hall
algebras nowadays, have a deep connection with Kac--Moody Lie algebras and
their quantized enveloping algebras. In fact, by results of Ringel \cite{R90c} and
Green \cite{Gr95}, for a finite dimensional hereditary algebra $A$ over a finite
field $k=\bbF_q$, the subalgebra of its Ringel--Hall algebra $\cH(A)$
generated by simple modules is isomorphic to the positive part of the quantized
enveloping algebra $U_{\boldsymbol v}({\frak g})$ of the (symmetrizable) Kac--Moody Lie algebra
$\frak g$ associated with $A$ by specializing $\boldsymbol v$ to $\sqrt{q}$.

It was further shown by Sevenhant and Van den Bergh \cite{SVd2} (see also \cite{DX3})
that the whole algebra $\cH(A)$ is generated by its primitive elements and is
isomorphic to the positive part of the quantized enveloping algebra of a generalized
Kac--Moody Lie algebra in the sense of Borcherds \cite{Bor}.
Inspired by \cite{SVd2}, Deng and Xiao \cite{DX3} proved that the dimension
of the space of primitive elements in $\cH(A)$ of a fixed dimension vector is a
polynomial in $q$ with rational coefficients, and related them with Kac's conjecture,
which is concerned with numbers of absolutely indecomposable representations
of quivers. The generating functions of these polynomials have been
studied in \cite{BS}, where primitive elements are called cuspidal functions,
and the dimension of absolutely cuspidal functions of a fixed dimension vector
was introduced and studied.

In \cite{BG}, the authors proved that the Hall algebra of every profinitary
and cofinitary exact category, including the category of finite dimensional
 modules over each finitely generated $\bbF_q$-algebra, is generated by
its primitive elements. It is thus interesting to describe primitive elements
in a given Ringel--Hall algebra.

For a positive integer $r$, let $C_r$ denote the cyclic quiver with $r$ vertices.
By $\Rep_{\bbF_q}C_r$ (resp., $\Rep^0_{\bbF_q}C_r$) we denote the category of finite
dimensional (resp., nilpotent) representations of $C_r$ over $\bbF_q$.
The Ringel--Hall algebra $\cH(\bbF_q C_r)$ of the path algebra $\bbF_q C_r$ of
$C_r$ admits a subalgebra $\cH^0(\bbF_q C_r)$ generated by
nilpotent representations of $C_r$ over $\bbF_q$ which was widely studied;
see, e.g., \cite{R93a,L99,VV,Sch,DD05,DDF}. When $r=1$, i.e., $C_1$ is the
Jordan quiver, $\cH^0(\bbF_q C_1)$ is exactly the classical Hall
algebra whose primitive elements can be explicitly
constructed; see \cite[Ch.~III, Sect.~7]{Mac}. Namely, let $S$ be the unique
simple nilpotent representation of $C_1$ over $\bbF_q$, and for $t\geq 1$,
let $S[t]$ denote the indecomposable nilpotent representation of $C_1$
of dimension $t$. For each partition $\lz=(\lz_1,\ldots,\lz_m)$, set
\begin{equation} \label{indec-lambda}
I_\lz=\bigoplus_{i=1}^m S[\lz_i].
\end{equation}
Then the elements
$$p_n^{(1)}=p_n=\sum_{\lz \vdash n}\bigg(\prod_{s=1}^{\ell(\lz)-1}(1-q^s)
\bigg)[I_\lz],\;\forall\, n\geq 1$$
are primitive and form a minimal set of generators in $\cH^0(\bbF_q C_1)$,
where $\lz$ is a partition of $n$, $\ell(\lz)$ is the length of $\lz$, and
$[I_\lz]$ denotes the isoclass (isomorphism class) of $I_\lz$.

In general, based on Schiffmann's work \cite{Sch}, Hubery \cite{Hub1}
constructed a family of central elements $\cH^0(\bbF_q C_r)$ and derived
primitive elements $p_n^{(r)}$ admitting expression
$$p_n^{(r)}=\sum_{i=1}^r [M_i]+\text{other terms},$$
where $M_1,\ldots,M_r$ are all pairwise non-isomorphic indecomposable
nilpotent representations of dimension $nr$ of $C_r$. A formula
for $p_n^{(r)}$ in terms of monomials of these central elements was
given in \cite{Ma}.

Recently, Hennecart \cite{Hen} studied primitive elements in the Ringel--Hall algebra
$\cH(A)$ of the path algebra $A=\bbF_q Q$ of a tame quiver $Q$ and proved that
the space of primitive elements in $\cH(A)$ with dimension vector a fixed positive
 imaginary root is the kernel of a nonzero linear function on the space of {\it regular}
primitive elements with the same dimension vector. The proof is based on the description
of primitive elements in the cyclic quiver case, as well as the well-known
embedding from the category of finite dimensional representations of the Kronecker
quiver into that of finite dimensional representations of $Q$.

The main purpose of the present paper is to apply the approach of Hennecart
to obtain a more explicit description of primitive elements in the
Ringel--Hall algebra of an arbitrary tame hereditary $\bbF_q$-algebra.
To this aim, we deal with the Ringel--Hall algebra $\cH(A)$ of an $\bbF_q$-algebra
$A={\frak A}(Q,\sz;q)$ associated with a quiver $Q$ with automorphism $\sz$ introduced in
\cite{DD06}; see the definition in Section 2. Let $A\mod$ denote the category
of finite dimensional $A$-modules. By the definition, $\cH(A)$ is
a complex vector space with a basis given by the isoclasses
$[M]$ of modules $M\in A\mod$, and it is a (twisted) bialgebra whose
multiplication and comultiplication $\dt$ are adjoint with respect to the
symmetric bilinear form
$\{-,-\}: \cH(A)\times \cH(A)\ra \bbC$ defined by
$$\{[M],[N]\}=\delta_{[M],[N]}\frac{1}{a_M},\;\forall\,M,N\in A\mod$$
 where $a_M$ is the cardinality
 of the automorphism group $\Aut(M)$, that is, there holds that
$$\{xy,z\}=\{x\otimes y, \dt(z)\},\,\forall\, x,y,z\in \cH(A).$$
 This fact indeed implies that $\cH(A)$ is generated by primitive elements.

Now let $Q$ be an acyclic tame quiver, that is, $A={\frak A}(Q,\sz;q)$ is a tame hereditary
algebra, and let $I$ denote the set of isoclasses of simple $A$-modules. For each
$M\in A\mod$, by $\udim M$ we denote the dimension vector of $M$.
Then $\cH(A)$ is $\bbN I$-graded with
$$\cH(A)=\bigoplus_{\bfd\in\bbN I}\cH(A)_{\bfd},$$
where $\cH(A)_{\bfd}$ is spanned by $[M]$ with $\udim M=\bfd$.
Let $\dz$ be the minimal positive imaginary root in the root system associated
with $(Q,\sz)$, and
for each $n\geq 1$, set
$$\cH(A)_{n\dz}^{\prim}=\{x\in \cH(A)_{n\dz}\mid \dt(x)=x\otimes 1+1\otimes x\}.$$

It is known that the full subcategory $\scrR_A$ of $A\mod$ consisting of
regular $A$-modules is closed under extensions. This defines a subalgebra
$\cH(\scrR_A)$ of $\cH(A)$. Further, the comultiplication of $\cH(A)$ restricts
to a comultiplication $\dt_\scrR$ on $\cH(\scrR_A)$ such that
$\cH(\scrR_A)$ becomes a bialgebra. Further, set
$$\cH(\scrR_A)_{n\dz}^{\prim}=\{x\in \cH(\scrR_A)_{n\dz}\mid
\dt_\scrR(x)=x\otimes 1+1\otimes x\}.$$
 A slight modification of \cite[Props.~6.1\;\&\;6.2]{Hen}
gives that
$$\cH(A)_{n\dz}^{\prim}\subset \cH(\scrR_A)_{n\dz}^{\prim}.$$

As in \cite{Hen}, set
$$\one_{n\dz}^\reg=\sum_{{[M],\,M\in\scrR_A}\atop{\udim M=n\dz}}[M]\in \cH(A)_{n\dz}$$
and define a linear function
$$\bbI^\reg_{n\dz}=\{-,\one_{n\dz}^\reg\}: \cH(\scrR_A)^{\prim}_{n\dz}\lra \bbC,\;
z\lmto \{z,\one_{n\dz}^\reg\}.$$

One of our main purposes is to prove the following theorem which generalizes
\cite[Thm~1.1]{Hen} to an arbitrary tame hereditary $\bbF_q$-algebra.

\begin{Thm} \label{Main-thm-1} Let $Q$ be an acyclic tame quiver with
automorphism $\sz$. Then for each $n\geq 1$,
$$\cH(A)_{n\dz}^{\prim}=\Ker \bbI^\reg_{n\dz}.$$
\end{Thm}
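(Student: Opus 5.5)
The plan is to compare the full comultiplication $\dt$ with the regular comultiplication $\dt_\scrR$ on an element $z\in\cH(\scrR_A)^{\prim}_{n\dz}$, and to show that the only obstruction to $z$ being primitive in $\cH(A)$ is the single scalar $\{z,\one^\reg_{n\dz}\}$.

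\textbf{Inclusion $\subseteq$.} We first take $\cH(A)^{\prim}_{n\dz}\subset\cH(\scrR_A)^{\prim}_{n\dz}$ as given, which is the stated modification of \cite[Props.~6.1\;\&\;6.2]{Hen}. Let $x\in\cH(A)^{\prim}_{n\dz}$. Primitive elements of positive degree are orthogonal to all products $yw$ with $y,w$ of positive degree, since
$$\{yw,x\}=\{y\otimes w,\dt(x)\}=0.$$
Let $P$ be an indecomposable preprojective module with $\udim P=\bfd$ and $n\dz-\bfd>0$. Writing $\one_{\bfd}$ for the sum of all isoclasses of that dimension vector, the product $[P]\cdot\one_{n\dz-\bfd}$ can be expanded. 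Every module of dimension $n\dz$ is either regular or has a nonzero preprojective direct summand. Using Riedtmann/Green-type counting of extensions, I would show that a suitable linear combination of such products, together with $\one_{n\dz}$ itself, isolates $\one^\reg_{n\dz}$ modulo the span of products. The element $\one_{n\dz}$ is the coefficient element of the product $\one_{\bfd}\one_{n\dz-\bfd}$ only up to normalization, so the cleaner route is the following. $\one^\reg_{n\dz}$ equals $\one_{n\dz}$ minus the sum over nonregular modules. Each nonregular $M$ has a preprojective quotient or a preinjective submodule, so this sum is a combination of products, by induction on the defect. Since $\{\one_{n\dz},x\}=\{\one_{\bfd}\one_{n\dz-\bfd},x\}$ up to a nonzero factor, it vanishes, and hence $\{x,\one^\reg_{n\dz}\}=0$.

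\textbf{Inclusion $\supseteq$.} Let $z\in\Ker\bbI^\reg_{n\dz}$ be regular primitive. Write
$$\dt(z)=z\otimes1+1\otimes z+\sum_{\bfd}\dt_{\bfd,n\dz-\bfd}(z).$$
Because $z$ is regular, a component $[X]\otimes[Y]$ can occur only when $X$ is a quotient and $Y$ a submodule of a regular module. Hence $X$ has no preprojective summand and $Y$ has no preinjective summand. Since $\dt_\scrR(z)$ has no middle terms, every surviving component has $X$ or $Y$ nonregular. I would test $\dt(z)$ against $[X]\otimes[Y]$ via adjointness, which gives $\{[X][Y],z\}$. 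If $X=X_r\oplus I$ with $I$ preinjective, the Hall product $[X][Y]$ projected to regular modules should factor through products of regular pieces, using that $\Hom(\scrR,\text{preproj})=0$ and similar vanishing. This reduces $\{[X][Y],z\}$ to pairings of $z$ against regular products, which vanish, except for a term proportional to $\{\one^\reg_{n\dz},z\}$.

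\textbf{Main obstacle and strategy for it.} The key step, and the main obstacle, is to show that $\sum_{[X],[Y]}\{[X][Y],z\}$ depends on $z$ only through $\{z,\one^\reg_{n\dz}\}$. I would reduce it to the Kronecker case through the embedding of Kronecker modules, or in the valued case through the tubular structure of $\scrR_A$, as Hennecart does. Concretely, I would use that the regular part of $[P][I]$, for $P$ preprojective and $I$ preinjective with $\udim P+\udim I=n\dz$, equals $c\cdot\one^\reg_{n\dz}$ plus products of regular elements of smaller degree. This should follow because $\Ext^1(I,P)$ is homogeneous over all tubes, which I would verify by counting over the projective line $\bbP^1$ and the exceptional tubes via Green's formula. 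The remaining components, with both $X$ and $Y$ preprojective/preinjective mixtures, I would handle by induction on degree.
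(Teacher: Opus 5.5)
Your $\subseteq$ direction is essentially right, and it needs less than the paper uses. The paper invokes Zhang's theorem $\one^\reg_{n\dz}\in\cC_v(Q,\sz)_{n\dz}$, whereas you only need $\one^\reg_{n\dz}\in\Xi(Q,\sz)_{n\dz}$. That holds for two reasons. First, $\one_{n\dz}\in\Xi(Q,\sz)_{n\dz}$: split off a source as in Proposition~\ref{ann-int-map}. This does not work for arbitrary $\bfd$, since $\one_{\bfd}\one_{n\dz-\bfd}$ is in general not proportional to $\one_{n\dz}$. Second, every nonregular isoclass of dimension vector $n\dz$ lies in $\Xi(Q,\sz)_{n\dz}$ directly, by the argument of Proposition~\ref{prim-supp-reg}; no induction on the defect is needed.

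The gap is in $\supseteq$. Your reduction there can be completed. By defect, a nonregular component $[X]\otimes[Y]$ of $\dt(z)$ has $X=X_r\oplus I$ and $Y=Y_p\oplus Y_r$ with $I\neq 0\neq Y_p$. If $X_r\neq0$ or $Y_r\neq0$, split it off: the complementary factor in $\dt(z)$ is then forced to be regular, so $\dt_\scrR$-primitivity kills the pairing. So everything hinges on your key claim: for $I$ preinjective and $P$ preprojective with $\udim I+\udim P=n\dz$, the functional $z\mapsto\{[I][P],z\}$ on $\cH(\scrR_A)^\prim_{n\dz}$ is a multiple of $\bbI^\reg_{n\dz}$. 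It must be $[I][P]$, which counts $\Ext^1(I,P)$; the product $[P][I]$ is just a multiple of $[P\oplus I]$.

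On the basis $\{p_m(x)\}$ of Proposition~\ref{prim-elt-reg-tame}, the claim says that $\{[I][P],p_m(x)\}/\{\one^\reg_{n\dz},p_m(x)\}$ is the same for every tube $\cT_x$ with $m\deg(x)=n$. This must hold across homogeneous tubes of different degrees and across the exceptional tubes. ``$\Ext^1(I,P)$ is homogeneous over all tubes'' does not prove this. There is no evident symmetry relating tubes of different degrees. For an exceptional tube you would have to pair against $p^{(r)}_m$, for which no closed formula is known, so the proposed Green's-formula count has nothing concrete to compute with.

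Nor is this a technical side lemma; it is the whole content of the theorem. Given $\subseteq$, your reduction, and $\bbI^\reg_{n\dz}\neq0$, the key claim is equivalent to $\dim\cH(\scrR_A)^\prim_{n\dz}=\dim\cH_v(Q,\sz)^\prim_{n\dz}+1$. The paper does not prove this codimension-one statement by computation. It takes the known formula $\dim\cH_v(Q,\sz)^\prim_{n\dz}=I_{Q,\sz}(n\dz,q)-\mult\,n\dz$ from \cite{HX,ZZG,Obul,DOP} and compares it with the tube counts $I_{Q,\sz}(n\dz,q)=\sum_{\deg(x)\mid n}r_x$ and $\mult\,n\dz=1+\sum_{\deg(x)\mid n}(r_x-1)$. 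It then only needs $\bbI^\reg_{n\dz}\neq0$, which is witnessed by $\{p_1(x),\one^\reg_{n\dz}\}=1/(q^{e_x}-1)$ for a tube of degree $n$. Without this dimension input, or a genuine proof of your key claim, your argument does not establish $\Ker\bbI^\reg_{n\dz}\subseteq\cH_v(Q,\sz)^\prim_{n\dz}$.
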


The theorem is indeed a consequence of the formula
\begin{equation} \label{key-identity-1}
\bigg\{p_n^{(r)},\one_{n\dz}^{(r)}\bigg\}
=\sum_{\lz \vdash n}\frac{\prod_{s=1}^{\ell(\lz)-1}(1-q^s)}{a_\lz(q)}\;\;(n\geq 1)
\end{equation}
 in $\cH^0(\bbF_q C_r)$, where $a_\lz(q)$ is the cardinality of the automorphism
 group of $I_\lz$, and
$$\one_{n\dz}^{(r)}=\sum_{{[M],\,M\in \Rep^0_{\bbF_q}C_r}\atop{\udim M=n\dz}}[M].$$
This formula is clearly true for $r=1$. When $r>1$, it can be obtained by
viewing $\cH^0({\bbF_q}C_r)$ as a subalgebra of $\cH(\bbF_q Q)$
for a specific acyclic tame quiver $Q$ of type $\wt A$.

Furthermore, by applying Fourier transforms of Ringel--Hall algebras, we obtain
the identity
\begin{equation} \label{key-identity-2}
\sum_{\lz \vdash n}\frac{\prod_{s=1}^{\ell(\lz)-1}(1-q^s)}{a_\lz(q)}
=\frac{1}{q^n-1},\;\,\forall\,n\geq 1,
\end{equation}
which indeed enables us to give an explicit basis of $\cH(A)_{n\dz}^{\prim}$.
Let us explain this in the case where $A=\bbF_q Q$ for
an acyclic tame quiver $Q$ (with $\sz=\id$). In this case, $\scrR_A$
consists of infinitely many tubes which can be parameterized by the
projective line $\bbP^1_{\bbF_q}$. For each $x\in \bbP^1_{\bbF_q}$,
let $\deg(x)$ denote its degree and $\cT_x$ be the corresponding
tube in $\scrR_A$. Then each $\cT_x$ defines a subalgebra $\cH(\cT_x)$
of $\cH(\scrR_A)$ which is isomorphic to $\cH^0(\bbF_{q^d} C_r)$,
where $d=\deg(x)$ and $r$ is the rank of $\cT_x$. Then for each $m\geq 1$,
the primitive element $p_m^{(r)}\in\cH^0(\bbF_{q^d} C_r)$ defined above
(over $\bbF_{q^d}$) gives rise to a primitive element
$$p_m(x)\in \cH(\scrR_A)_{md\dz}^{\prim},$$
which satisfies that
$$\big\{p_m(x),\one^\reg_{md\dz}\big\}=\sum_{\lz \vdash m}
\frac{\prod_{s=1}^{\ell(\lz)-1}(1-q^{ds})}{a_\lz(q^d)}.$$
 This together with the identity \eqref{key-identity-2} gives the following result
which extends \cite[Cor.~6.10]{Hen} in a full generality.

\begin{Thm} \label{Main-thm-2} Let $Q$ be an acyclic tame quiver with $A=\bbF_q Q$,
and for each $n\geq 1$, fix $x\in \bbP_{\bbF_q}^1$ with $n=m\deg(x)$ for some
$m\geq 1$. Then the set
$$\bigg\{ p_m(x)-p_t(y)\;\bigg|\; y\in \bbP_{\bbF_q}^1,\; y\not=x,\;
\text{ and }\;\,t\deg(y)=n\bigg\}$$
is a basis of $\cH(A)^{prim}_{n\dz}$.
\end{Thm}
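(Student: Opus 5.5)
By Theorem \ref{Main-thm-1}, $\cH(A)^{\prim}_{n\dz}=\Ker\bbI^\reg_{n\dz}$. The plan is to prove two facts and combine them. First, the elements $p_t(y)$, with $y\in\bbP^1_{\bbF_q}$ and $t\deg(y)=n$, form a basis of $\cH(\scrR_A)^{\prim}_{n\dz}$. Second, $\bbI^\reg_{n\dz}$ takes the same nonzero value on each of them. Granting both, every difference $p_m(x)-p_t(y)$ lies in $\Ker\bbI^\reg_{n\dz}$. These differences are linearly independent, being nonzero multiples of distinct basis vectors $p_t(y)$ corrected by the fixed vector $p_m(x)$. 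Their number is one less than $\dim\cH(\scrR_A)^{\prim}_{n\dz}$, and since $\bbI^\reg_{n\dz}\neq 0$ the kernel has exactly this codimension-one dimension. So they form a basis.

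For the value of $\bbI^\reg_{n\dz}$, I would use the displayed formula
$$\{p_t(y),\one^\reg_{n\dz}\}=\sum_{\lz\vdash t}\frac{\prod_{s=1}^{\ell(\lz)-1}(1-q^{ds})}{a_\lz(q^d)},\qquad d=\deg(y).$$
Applying \eqref{key-identity-2} over the field $\bbF_{q^d}$, this equals $1/((q^d)^t-1)=1/(q^n-1)$. The value depends only on $n$ and is nonzero, which settles the second fact.

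For the basis statement I would decompose the regular part. Since $\scrR_A=\bigoplus_y\cT_y$ with no homomorphisms or extensions between distinct tubes, $\cH(\scrR_A)$ is the restricted tensor product $\bigotimes_y\cH(\cT_y)$. This decomposition is compatible with $\dt_\scrR$, up to the twist, which is trivial between tubes because the Euler form vanishes on regular dimension vectors of different tubes, as each is a multiple of $\dz$ locally. In a tensor product of connected graded bialgebras, the primitive space is the direct sum of the primitive spaces of the factors. Hence
$$\cH(\scrR_A)^{\prim}_{n\dz}=\bigoplus_y\cH(\cT_y)^{\prim}_{n\dz},$$
where only tubes with $\deg(y)\mid n$ contribute. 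Moreover, the primitives in $\cH(\cT_y)$ of degree $n\dz$ must have dimension vector a multiple of $d\dz$.

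The main obstacle is showing that, inside $\cH(\cT_y)\cong\cH^0(\bbF_{q^d}C_r)$, the primitive space of dimension vector $m\dz_r$ is one-dimensional and spanned by $p_m^{(r)}$. Here $\dz_r$ is the dimension vector $(1,\dots,1)$ of the cyclic quiver, and primitives of other dimension vectors are excluded because they are not multiples of $d\dz$ in $\cH(A)$. I would deduce this from the structure $\cH^0(C_r)\cong\mathcal{C}\otimes Z$ of Schiffmann and Hubery, with $Z$ a polynomial ring in central generators of degrees $m\dz_r$. Counting generators degreewise, together with the fact from Sevenhant--Van den Bergh that the Hall algebra is generated freely enough by primitives, gives exactly one new primitive in each degree $m\dz_r$. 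Since $p_m^{(r)}$ is nonzero and primitive, it spans this space. If this counting is delicate, one can instead cite Hennecart's description of the regular primitives in the tube case.
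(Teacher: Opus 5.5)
Your proof is correct and follows the paper's own argument: combine Theorem~\ref{Main-thm-1} with the basis of Proposition~\ref{prim-elt-reg-tame}, compute $\{p_t(y),\one^\reg_{n\dz}\}=1/(q^{n}-1)$ from Lemma~\ref{key-lemma} and Lemma~\ref{value-xi(q,n)} (using $e_y=\deg(y)$ when $\sz=\id$), and conclude by the codimension-one count. Your re-derivation of the regular basis is extra, since the paper quotes it and cites Hennecart for one-dimensionality in the cyclic case. In that re-derivation the twist vanishes between distinct tubes because $\Hom$ and $\Ext^1$ vanish between them, not because the dimension vectors are multiples of $\dz$; the latter fails for quasi-simples in non-homogeneous tubes.
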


The paper is organised as follows. Section 2 gives a brief introduction on the
Ringel--Hall algebra $\cH_v(Q,\sz)$ associated with a quiver $Q$ with automorphism $\sz$
and its primitive elements. In section 3, we relate primitive elements in $\cH_v(Q,\sz)$
with those in its subalgebras defined by full subcategories of $A$-modules, with
emphasis on the full subcategory of regular $A$-modules, as well as that
arising from an orthogonal exceptional pair of $A$-modules. Section 4 recalls a
construction of primitive elements in the Ringel--Hall algebra of nilpotent representations
of a cyclic quiver, including the classical Hall algebra. In section 5 we describe
regular primitive elements in $\cH_v(Q,\sz)$ when $Q$ is an acyclic tame quiver.
The proof of Theorem \ref{Main-thm-1} is given in Section 6, and Theorem \ref{Main-thm-2}
follows from Theorem \ref{Main-thm-1} and the identity in
\eqref{key-identity-2}, whose proof involves Fourier transforms of Ringel--Hall
algebras and is given in Section 7.

\medskip

{\it Throughout the paper, $\bbF_q$ denotes a finite field of $q$ elements,
$A$ often denotes an $\bbF_q$-algebra, and let $A\mod$ be the category
of finite dimensional (left) $A$-modules. For a module $M\in A\mod$, by $[M]$
we denote the isoclass of $M$, and for each positive integer $s\geq 1$,
we write
$$sM=\underbrace{M\oplus \cdots\oplus M}_s.$$
For a class $\mathcal X$ of $A$-modules, let $\add\cX$ denote the full
subcategory of $A\mod$ consisting of the direct summands of finite direct sums
of $A$-modules in $\mathcal X$. }

\medskip

\section{Ringel--Hall algebras and their primitive elements}

In this section we recall the definition of the (twisted) Ringel--Hall algebra of
an algebra $A$ over a finite field and state some properties of their primitive elements
when $A$ is associated with a quiver with automorphism.

Let $A$ be a finitely generated algebra over the finite field $\bbF_q$.
For $L, M_1,\ldots,M_t\in A\mod$,
let $F^L_{M_1,\ldots,M_t}$ be the number of the filtrations of submodules
$$L=L_0\supseteq L_1\supseteq \cdots\supseteq L_{t-1}\supseteq L_t=0$$
 such that $L_{i-1}/L_i\cong M_i$ for all $1\le i \le t$. Since
$L$ is finite as a set, $F^L_{M_1,\ldots,M_t}$ is a nonnegative
integer, called the {\it Hall number} associated with
$L,M_1,\ldots, M_t$. In particular, if $t=2$, $F^L_{M_1,M_2}$ is the number of
submodules $X$ of $L$ such that $X\cong M_2$ and $L/X\cong M_1$. For each $M\in A\mod$,
set $a_M=|\Aut_A(M)|$, the cardinality of the automorphism group of $M$.

Following Ringel \cite{R90a}, the Hall algebra $H(A)$ of $A$ is by definition the algebra
over the complex field $\bbC$ with a basis $\{[M]\mid M\in A\mod\}$,
the set of isoclasses of modules in $A\mod$, and with multiplication given by
\begin{equation}\label{twisted multn}
[M]\diamond [N]=\sum_{[L]} F^L_{M,N}[L],\;\,\forall\, M,N\in A\mod,
\end{equation}
 where the sum is over all the isoclasses of $A$-modules $L$. This is
indeed a finite sum since $A$ is finitely generated. Then
$H(A)$ is an associative algebra with identity $[0]$, the isoclass
of zero module. Moreover, by a result of Green \cite{Gr95}, $H(A)$ admits a
coalgebra structure with comultiplication given by
$$\dt([M])=\sum_{[X],[Y]}\frac{a_{X} a_{Y}}{a_M} F_{X,Y}^M\,  [X]\otimes [Y],\;\,\forall\, M\in A\mod.$$
As usual, an element $x\in H(A)$ is called primitive if
$$\dt(x)=x\otimes 1+1\otimes x.$$
 The following result is a consequence of \cite[Main Thm 1.2]{BG}, which generalizes
a result of Sevenhant--Van den Bergh \cite{SVd1} for path algebras of quivers.

\begin{Prop} Let $A$ be a finitely generated algebra over a finite field $\bbF_q$.
Then $H(A)$ is generated by its primitive elements.
\end{Prop}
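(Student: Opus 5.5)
Since the statement is presented as a consequence of \cite[Main Thm 1.2]{BG}, the plan is to verify that $A\mod$ satisfies the hypotheses there and to check that the bialgebra structure above matches theirs up to harmless normalizations.

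\textbf{Step 1: the category is finitary.} The category $A\mod$ of finite dimensional modules over a finitely generated $\bbF_q$-algebra is an abelian, hence exact, category. For any $M,N\in A\mod$, the groups $\Hom_A(M,N)$ and $\Ext^1_A(M,N)$ are finite-dimensional $\bbF_q$-spaces and therefore finite sets. Moreover, each module $L$ has only finitely many submodules and only finitely many quotients, because $L$ is itself a finite set. This gives the profinitary and cofinitary conditions of \cite{BG}: every object admits only finitely many admissible subobjects and quotients, up to the relevant equivalence. It also shows that the sums defining $\diamond$ and $\dt$ are finite.

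\textbf{Step 2: matching the structures.} Next I would compare the Hall product and coproduct of \cite{BG} with the ones above. In \cite{BG} the Hall algebra of the category is shown to be generated by primitive elements, where primitivity is taken for Green's coproduct, possibly in a twisted or dualized normalization. The basis rescaling $[M]\mapsto [M]/a_M$ (or its inverse) interchanges the counting conventions $F^L_{M,N}$ and $\frac{a_Xa_Y}{a_M}F^M_{X,Y}$. Such a diagonal change of basis, homogeneous with respect to the grading by classes in the Grothendieck group, is an isomorphism of algebras and coalgebras between the two versions. Since primitivity depends only on the coproduct, it is preserved. Generation by a set is likewise preserved under an algebra isomorphism.

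\textbf{Alternative route via the pairing.} If one prefers to bypass the exact form of \cite{BG}, the generation statement follows from the Green-type adjunction $\{xy,z\}=\{x\otimes y,\dt(z)\}$ together with the nondegeneracy of the form $\{-,-\}$ on each finite-dimensional graded piece. Here the grading is by dimension over $\bbF_q$, and each piece is finite-dimensional because there are only finitely many isoclasses of a given dimension. Let $\mathcal P$ be the subalgebra generated by the primitives and suppose $z\perp \mathcal P$ is homogeneous of minimal degree. Then $\dt(z)$ pairs trivially with all products, which forces $z$ to be primitive, and hence $\{z,z\}=0$. Positivity of the form then gives $z=0$. This argument requires Green's theorem that $\dt$ is multiplicative, and that theorem is only available in the hereditary setting. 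That is precisely why one cites \cite{BG} for general finitely generated $A$.

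\textbf{Main obstacle.} The difficulty lies entirely in confirming that the normalization in \cite{BG} agrees with the one used here after the rescaling of Step 2. The substantive content is the theorem of \cite{BG}, which I would not reprove.
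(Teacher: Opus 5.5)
Your main route is the paper's: it gives no argument beyond invoking \cite[Main Thm~1.2]{BG}, noting that finite dimensional modules over a finitely generated $\bbF_q$-algebra fall under its hypotheses, which is what your Step~1 checks. One correction to your aside: the pairing argument does not need $\dt$ to be multiplicative, because $\{xy,z\}=\{x\otimes y,\dt(z)\}$ holds by the very definition of Green's coproduct for any $A$, so that argument already proves the statement for arbitrary finitely generated $A$ (choose $z$ in the real span of the $[M]$, which you can since everything is defined over $\mathbb{R}$, so that positivity gives $z=0$); the paper's introduction likewise remarks that adjointness alone implies generation by primitives.
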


In this paper we mainly focus on finitely generated $\bbF_q$-algebras
associated with quivers with automorphism and their Ringel--Hall
algebras with a twisted multiplication via associated Euler forms.

Let $Q=(Q_0,Q_1)$ be a finite quiver with vertex set $Q_0$ and arrow
set $Q_1$. For each arrow $\rho$ in $Q_1$, we denote by $h\rho$ and
$t\rho$ the head and the tail of $\rho$, respectively.
Let $\sz$ be an automorphism of $Q$, that is, $\sz$ is a permutation on $Q_0$,
as well as on $Q_1$, satisfying $\sz(h\rho)=h\sz(\rho)$ and $\sz(t\rho)=t\sz(\rho)$
for each $\rho\in Q_1$.

Let $\ofq$  be the algebraic closure of $\bbF_q$ and $\ofq Q$ be the path algebra
of $Q$ over $\ofq$. The automorphism $\sz$ of $Q$ induces a Frobenius morphism
$$F_{Q,\sz;q}:\ofq Q\lra \ofq Q,\,\,\sum_{s}x_sp_s\lmto \sum_{s}x_s^q\sz(p_s),$$
 where $\sum_{s}x_sp_s$ is an $\ofq$-linear combination of
paths $p_s$ in $Q$. The set of fixed points
\begin{equation} \label{Frob-Morph-quiver-auto}
A={\frak A}(Q,\sz;q):=\{a\in A\mid F_{Q,\sz;q}(a)=a\}
\end{equation}
 is then a finitely generated $\bbF_q$-algebra. By \cite[Thm~2.1]{DD06},
$A$ is in fact isomorphic to the tensor algebra of the
$\bbF_q$-species associated with $(Q,\sz)$. Hence, it is a
hereditary $\bbF_q$-algebra.

By the definition, if $\sz=\id$, then $A=\bbF_q Q$ is
the path algebra of $Q$ over $\bbF_q$. In general, $A$ is finite dimensional
if and only if $Q$ is an acyclic quiver, i.e., a quiver containing no oriented
cycles. Moreover, every finite dimensional hereditary $\bbF_q$-algebra is Morita
equivalent to ${\frak A}(Q,\sz;q)$ for some acyclic quiver $Q$ with automorphism $\sz$.

Further, to the pair $(Q,\sz)$ we can attach a valued quiver $\ggz=\ggz(Q,\sz)=(\ggz_0,\ggz_1)$
with vertex set $I:=\ggz_0$ (resp., arrow set $\ggz_1$) the set
of $\sz$-orbits in $Q_0$ (resp., in $Q_1$). The valuation is
defined as follows: for each $i\in I=\ggz_0$, let $\vez_i$ be
the number of vertices in the orbit $i$, while for each arrow
$\rho:t\rho\longrightarrow h\rho$ in $\ggz$, let $(d_{\rho},d'_{\rho})$
be the pair of positive integers defined by
\begin{equation}\label{GCM}
d_\rho=\vez_\rho/\vez_{h\rho}\;\;
\text{and}\;\;d'_{\rho}=\vez_\rho/\vez_{t\rho},
\end{equation}
 where $\vez_\rho$ denotes the number of arrows in the orbit
$\rho$. If $\sz=\id$, the identity automorphism of $Q$, then the
valued quiver $\ggz(Q,\sz)$ coincides with $Q$.

Let $\bbZ I$ be the free abelian group with basis $I$ whose elements will be written
as either $\sum_{i\in I}x_ii$ or $(x_i)_{i\in I}$. The Euler form associated with
$\Gamma$ is defined to be the bilinear form
$\lan-,-\ran:\bbZ I \times \bbZ I\rightarrow \bbZ$ given by
$$\lan \bfx,\bfy\ran=\sum_{i\in\Gamma_0}\vez_ix_iy_i-\sum_{\rho\in\Gamma_1} \vez_\rho x_{t\rho}y_{h\rho},$$
where $\bfx=\sum x_ii,\,\bfy=\sum y_ii\in\bbZ I$.
Its symmetrization
$$(\bfx, \bfy)=\lr{\bfx, \bfy} +\lr{\bfy, \bfx}$$
 is called the symmetric Euler form.

For each vertex $i\in I$, let $e_i$ be the corresponding idempotent in $A={\frak A}(Q,\sz;q)$.
Then for each $A$-module $M$, $e_iM$ is a vector space over $D_i:=\bbF_{q^{\vez_i}}$; see
\cite[\S3.3]{DDPW}. Define
$$\udim M=\sum_{i\in I}(\dim_{D_i}e_iM)i\in\bbN I,$$
 called the dimension vector of $M$. Then for $M,N\in A\mod$,
 $$\lr{\udim M, \udim N}=\dim_{\bbF_q}\Hom_A(M, N)-\dim_{\bbF_q}\Ext^1_A(M, N).$$

\begin{Example} \label{aff-A11} Let $Q$ be the following quiver
%\begin{center}
%\setlength{\unitlength}{1mm}
%\begin{picture}(60,32)
% \put(6,15){\circle*{1}} \put(24.5,28.5){\circle*{1}}
% \put(24.5,21){\circle*{1}} \put(24.5,9){\circle*{1}}
% \put(24.5,1.5){\circle*{1}}
%
%\put(7,15.6){\vector(4,3){16}} \put(7,15.2){\vector(3,1){16}}
%\put(7,14.8){\vector(3,-1){16}} \put(7,14.4){\vector(4,-3){16}}
%
%\put(3,14.2){$0$} \put(25.6,28){$1$} \put(25.6,20.5){$2$}
% \put(25.6,9){$3$}   \put(25.6,1){$4$}
%\end{picture}
%\end{center}

\begin{center}
\begin{pspicture}(1,-0.6)(4,2.95)
 \psdot(2.2,2.4) \psdot(2.2,0)
 \psdot(1,1.2) \psdot(2.2,1.2) \psdot(3.4,1.2)
 \psline{->}(2.2,1.25)(2.2,2.4)
 \psline{<-}(2.2,0)(2.2,1.15)
 \psline{->}(2.15,1.2)(1,1.2)
 \psline{<-}(3.4,1.2)(2.25,1.2)
\uput[r](3.45,1.2){$1$} \uput[d](2.2,0){$2$}
\uput[l](1,1.2){$3$}  \uput[u](2.2,2.4){$4$}
\uput[d](2.38,1.25){$0$} \uput[r](3.1,1.9){$\sz$}
\psarc[linestyle=dashed,dash=1pt .8pt]{<-}(2.2,1.2){1.2}{2}{90}
\psarc[linestyle=dashed,dash=1pt .8pt]{<-}(2.2,1.2){1.2}{92}{180}
\psarc[linestyle=dashed,dash=1pt .8pt]{<-}(2.2,1.2){1.2}{182}{270}
\psarc[linestyle=dashed,dash=1pt .8pt]{<-}(2.2,1.2){1.2}{272}{360}
%\uput[l](3.7,-.6){{\bf Fig.~VII}\;($\widetilde D_4$)}
 \end{pspicture}
\end{center}
 with automorphism $\sz$ fixing vertex $0$ and taking
$1\mapsto 2,\,2\mapsto 3,\,3\mapsto 4,\,4\mapsto 1$ (thus, permuting $4$ arrows as well). Then
 $$A={\frak A}(Q,\sz;q)\cong \left(\begin{matrix} \bbF_q & 0 \\
                      \bbF_{q^4} & \bbF_{q^4}  \end{matrix}\right),$$
 which is a tame hereditary algebra of type $\widetilde A_{1,1}$; see \cite[Sect.~6]{DR}.
Moreover, the associated valued quiver $\ggz=\ggz(Q,\sz)$ has the form
\begin{center}
\setlength{\unitlength}{1mm}
\begin{picture}(30,10)
 \put(5,5){\circle*{1}} \put(20,5){\circle*{1}}
\put(6,5){\vector(1,0){13}}

\put(1,4){$\bar 0$} \put(21,4){$\bar 1$}   \put(12,6.4){$\rho$}
\end{picture}
\end{center}
with valuations
$$\vez_{\bar 0}=1,\, \vez_{\bar 1}=4,\;\text{ and }\;\,\vez_\rho=4.$$
%In what follows, we will simply denote this algebra by $\widetilde A_{1,1}$.
\end{Example}

For a quiver $(Q,\sz)$ with automorphism, we have the associated Hall
algebra $H(A)$ of $A={\frak A}(Q,\sz;q)$, which, by the proposition above, is generated
by its primitive elements.

Now set $v=\sqrt{q}$. In \cite{R93}, Ringel introduced a twisted multiplication
on $H(A)$ given by
$$[M]\cdot [N]=\sum_{[L]} v^{^{\lan \udim M,\udim N\ran}}F^L_{M,N}[L],\;\,\forall\, M,N\in A\mod.$$
 Then $H(A)$ with the twisted multiplication is again an associative $\bbF_q$-algebra,
which is denoted by $\cH(A)=\cH_v(Q,\sz)$ and called the Ringel--Hall algebra of $A$.
It is known from \cite{Gr95} that $\cH_v(Q,\sz)$ is indeed a (twisted) bialgebra with
comultiplication given by
$$\dt([M])=\sum_{[X],[Y]}v^{\lan \udim X,\udim Y\ran}
\frac{a_{X} a_{Y}}{a_M} F_{X,Y}^M\,  [X]\otimes [Y],\;\,\forall\, M\in A\mod.$$

  For each $i\in I$, there is a (nilpotent) simple $A$-module $S_i$ with $\udim S_i=i$.
The subalgebra of $\cH_v(Q,\sz)$ generated by the $[S_i]$ is called the composition
algebra of $A$, denoted by $\cC(A)=\cC_v(Q,\sz)$.

Clearly, $\cH_v(Q,\sz)$ is $\bbN I$-graded in terms of dimension vectors.
More explicitly,
$$\cH_v(Q,\sz)=\bigoplus_{\bfd\in\bbN I}\cH_v(Q,\sz)_{\bfd},$$
where $\cH_v(Q,\sz)_{\bfd}$ is spanned by $[M]$ with $\udim M=\bfd$.
This grading induces an $\bbN I$-grading on $\cC_v(Q,\sz)$.

On the other hand, to the valued quiver $\ggz=\ggz(Q,\sz)$ we attach a matrix
$C_{Q,\sz}=(c_{i j})_{i,j\in I}$ defined by
$$c_{ij}=\left\{ \begin{array}{ll}
2-2\displaystyle\sum_{\tau} \vez_\tau/\vez_i, \;\; &\mbox{if}\;\; i=j, \\
-\displaystyle\sum_{\rho}\vez_\rho/\vez_i, \;\; &\mbox{if}\;\; i\ne j,
                 \end{array}\right.$$
where the first sum is taken over all loops $\tau$ at $i$, and the
second is taken over all arrows $\rho$ in $\ggz$ between $i$ and
$j$. The matrix $C_{Q,\sz}$ is a (symmetrizable) Borcherds--Cartan
matrix in the sense of \cite{Bor}.

 If, moreover, $\ggz=\ggz(Q,\sz)$ contains no loops, i.e., $c_{ii}=2$ for all
$i\in I$, then $C_{Q,\sz}$ is a (symmetrizable) generalized Cartan matrix.
Let ${\frak g}_{Q,\sz}={\frak g}(C_{Q,\sz})$ be the Kac--Moody Lie algebra associated with
$C_{Q,\sz}$; see the definition in \cite{Kac90}. It is known that
$\frak g_{Q,\sz}$ admits a root space decomposition
$$\fkg_{Q,\sz}=\fkg_0\oplus\bigoplus_{\alpha\in\Phi}\fkg_\alpha,$$
 where $\Phi=\Phi(Q,\sz)$ is the root system of $\fkg_{Q,\sz}$. For the notational
simplicity, we write $i$ for the simple root $\az_i$ and, thus, view $\Phi$ as
a subset of $\bbZ I$. It is known from \cite{Kac90} that
$$\Phi(Q,\sz)=\Phi^+(Q,\sz)\cup \Phi^-(Q,\sz)$$
 with $\Phi^-(Q,\sz)=-\Phi^+(Q,\sz)$.
The roots in $\Phi^+(Q,\sz)$ (resp., $\Phi^-(Q,\sz)$) are called
positive (resp., negative) roots. Let further $\Phi_\re(Q,\sz)$ and
$\Phi_\im(Q,\sz)$ denote the set of real and imaginary roots,
respectively. Set
$$\Phi^+_\re(Q,\sz)=\Phi^+(Q,\sz)\cap\Phi_\re(Q,\sz)\;\;\text{and}\;\;
\Phi^+_\im(Q,\sz)=\Phi^+(Q,\sz)\cap\Phi_\im(Q,\sz).$$
 For each $\mu=\sum_{i\in I}\mu_i i\in\bbN I$, let $\supp\mu$ be the full
subquiver of $\ggz$ consisting of vertices $i\in I$ with $\mu_i\not=0$.
Then the fundamental set $\cF_{Q,\sz}$ is defined to be the set
$$\{\mu\in\bbN I\mid \text{$\supp\mu$ is connected and $(\mu,i)\leq 0,\;\forall\,i\in I$}\}.$$

For example, if $(Q,\sz)$ denotes the quiver with automorphism given
in Example \ref{aff-A11}. Then
$$\Phi_\im(Q,\sz)=\bbZ\dz\;\text{ with }\;\, \dz=(2,1)\;\text{ and }
\;\, \cF_{Q,\sz}=\{m\dz\mid m\geq 1\},$$
 and the associated Kac--Moody Lie algebra $\fkg_{Q,\sz}$ is the affine Lie
 algebra of type $A^{(2)}_2$ in the notation of \cite[\S4.8]{Kac90}.

%
%\begin{Rem} By results of Ringel \cite{R90a} and Green \cite{Gr95}, if
%$Q$ is an acyclic quiver, then the composition algebra $\cC_v(Q,\sz)$ is isomorphic to
%the positive part of the quantized enveloping algebra $U_v(\fkg_{Q,\sz})$.
%\end{Rem}

We now recall a characterization of primitive elements in $\cH_v(Q,\sz)$ given
in \cite{SVd2}. For each $\bfd\in\bbN I$, set
$$\cH_v(Q,\sz)^\prim_\bfd=\{x\in \cH_v(Q,\sz)_\bfd\mid \dt(x)=x\otimes 1+1\otimes x\}.$$

Following Green \cite{Gr95}, there is a symmetric bilinear form (called Green form)
$$\{-,-\}: \cH_v(Q,\sz)\times \cH_v(Q,\sz)\lra \bbC$$ defined by
$$\{[M],[N]\}=\delta_{[M],[N]}\frac{1}{a_M},\;\forall\, M,N\in A\mod,$$
 which satisfies that
$$\{xy,z\}=\{x\otimes y, \dt(z)\},\,\forall\, x,y,z\in \cH_v(Q,\sz).$$
In other words, the multiplication and comultiplication in $\cH_v(Q,\sz)$ are adjoint
with respect to Green form $\{-,-\}$.

By \cite[Lem.~3.1]{SVd1}, for each $x\in \cH_v(Q,\sz)_\bfd$,
$$\text{$x$ is primitive}\Longleftrightarrow
\{x, \Xi(Q,\sz)_\bfd\}=0,$$
 where
$$\Xi(Q,\sz)_\bfd:=\sum_{\bfd', \bfd''} \cH_v(Q,\sz)_{\bfd'} \cH_v(Q,\sz)_{\bfd''}
\subseteq\cH_v(Q,\sz)_{\bfd} $$
with the sum taking over $\bfd',\bfd''\in\bbN I$ such that
$\bfd'+\bfd''=\bfd$ and $\bfd'\not=\bfd\not=\bfd''$. Moreover,
$\cH_v(Q,\sz)^\prim_\bfd\not=0$ implies that
$$\bfd\in \{i\mid i\in I\}\cup {\cF}_{Q,\sz}.$$

%\begin{Rems} (1) By \cite[Prop.~3.2]{SVd1} (see also \cite{DX3,BS}), $\cH_v(Q,\sz)$
%is generated by primitive elements. It is also shown that $\cH_v(Q,\sz)$ is
%isomorphic to the positive part of the quantized enveloping
%algebra of a generalized Kac--Moody Lie algebra in the sense of Borcherds \cite{Bor}.
%
%(2) It has been proved in \cite{DX3} that for each $\az\in{\mathcal F}_{Q,\sz}$,
%$\dim \cH_v(Q,\sz)^\prim_\az$ is a polynomial in $q$  of rational coefficients
%which is independent of the orientation of $\ggz(Q,\sz)$, but only of the
%underlying graph of $\ggz(Q,\sz)$.
%
%(3) The primitive elements in $\cH_v(Q,\sz)$ are called cuspidal functions in
%\cite{BS} where the dimension of absolutely cuspidal functions are introduced.
%In \cite{Hen}, isotropic cuspidal functions for a quiver are studied, and those of
%tame quiver are described.
%\end{Rems}

Define a $\bbC$-linear map
\begin{equation} \label{def-int-eva}
{\mathbb I}: \cH_v(Q,\sz)\lra\bbC,\;[M]\lmto\frac{1}{a_M}.
\end{equation}
For each $\bfd\in\bbN I$, $\bbI$ restricts to a map
$\bbI_\bfd: \cH_v(Q,\sz)_\bfd\lra\bbC,\;[M]\lmto\frac{1}{a_M}$.
Further, set
$$\one_\bfd=\sum_{[M],\,\udim M=\bfd}[M]\in \cH_v(Q,\sz)_\bfd.$$
By the definition, for each $x\in \cH_v(Q,\sz)_\bfd$, we have
$$\bbI_\bfd(x)=\{x,\one_\bfd\}.$$

\begin{Prop} \label{ann-int-map} Suppose $Q$ is acyclic and take $\az=(a_i)\in\cF_{Q,\sz}$.
Then
$$\cH_v(Q,\sz)_\az^\prim\subseteq \Ker \bbI_\az.$$
\end{Prop}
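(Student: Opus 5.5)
The idea is to use the characterization of primitive elements recalled above from \cite[Lem.~3.1]{SVd1}: $x\in\cH_v(Q,\sz)_\az$ is primitive if and only if $\{x,\Xi(Q,\sz)_\az\}=0$. Since $\bbI_\az(x)=\{x,\one_\az\}$, it suffices to show that $\one_\az\in\Xi(Q,\sz)_\az$. Concretely, I will show that $\one_\az$ is a nonzero scalar multiple of a product $\one_{\az-i}\cdot\one_i$ for a suitable vertex $i$ with $0\neq\az-i\neq\az$.

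\emph{Choice of vertex.} Since $\az\in\cF_{Q,\sz}$, its support $\supp\az$ is connected. Moreover $\az$ is not a simple root, because $(i,i)=2\vez_i>0$ when there are no loops; this holds since $Q$ is acyclic. Every $A$-module $L$ with $\udim L=\az$ is supported on $\supp\az$, which is again an acyclic valued quiver, so it has a sink $i\in\supp\az$; thus $a_i>0$. Because $\az\neq i$, we have $\az-i\in\bbN I\setminus\{0\}$, and the product $\one_{\az-i}\cdot\one_i$ indeed lies in $\Xi(Q,\sz)_\az$.

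\emph{Computing the product.} By the definition of the twisted multiplication,
$$\one_{\az-i}\cdot\one_i=v^{\lr{\az-i,i}}\sum_{[L],\,\udim L=\az}\Big(\sum_{[M]}F^L_{M,S_i}\Big)[L],$$
since $S_i$ is the unique module of dimension vector $i$. The inner sum equals the number of submodules $X\subseteq L$ with $X\cong S_i$. Such submodules are exactly the images of nonzero maps $S_i\to L$, and two maps have the same image precisely when they differ by an element of $\End(S_i)^\times=D_i^\times$.

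Because $i$ is a sink of $\supp\az$, the module $S_i$ behaves like the projective $Ae_i$ on modules supported on $\supp\az$. Hence
$$\Hom_A(S_i,L)\cong e_iL\cong D_i^{\,a_i},$$
since every vector of $e_iL$ spans a copy of $S_i$ (no arrows leave $i$ inside the support). So the number of such submodules is
$$\frac{q^{\vez_i a_i}-1}{q^{\vez_i}-1}=:c_i,$$
which is independent of $L$ and nonzero because $a_i>0$. Therefore
$$\one_{\az-i}\cdot\one_i=v^{\lr{\az-i,i}}\,c_i\,\one_\az,$$
and so $\one_\az\in\Xi(Q,\sz)_\az$. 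Consequently $\bbI_\az(x)=\{x,\one_\az\}=0$ for every primitive $x$, which is the claim.

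\emph{Main obstacle.} The step needing care is the identification $\Hom_A(S_i,L)\cong e_iL$ over $D_i$ in the species setting $A={\frak A}(Q,\sz;q)$. One must check that for a sink $i$ of the valued quiver restricted to the support, every element of $e_iL$ is annihilated by all arrows, so that $e_iL$ is a direct sum of copies of $S_i$ lying in the socle. This follows from the description of $A$ as a tensor algebra of an $\bbF_q$-species \cite[Thm~2.1]{DD06}. If this causes trouble, a fallback is to argue with the Riedtmann formula, showing that $\bbI$ is multiplicative up to the twist $v^{-\lr{\bfd',\bfd''}}$ and deducing that $\Delta(\one_\az)$ has a specific form; but the direct sink argument above should suffice.
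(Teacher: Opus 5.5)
Your argument is correct and is essentially the paper's. Both use acyclicity to write $\one_\az$ as a nonzero multiple of a product lying in $\Xi(Q,\sz)_\az$.

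The paper factors $\one_\az$ completely along a topological order $i_1,\ldots,i_n$ of the vertices: $\one_\az$ is a power of $v$ times $[a_{i_1}S_{i_1}]\cdots[a_{i_n}S_{i_n}]$. Each $L$ with $\udim L=\az$ has exactly one such filtration, and this even gives $\one_\az\in\cC_v(Q,\sz)_\az$. You instead split off a single $S_i$ at a sink of $\supp\az$, using the $L$-independent count $c_i$ of $D_i$-lines in $e_iL$.

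Your explicit check that $\az\neq i$, so that the product really lies in $\Xi(Q,\sz)_\az$, spells out a step the paper leaves implicit.
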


\begin{proof} Since $Q$ is acyclic, $\ggz=\ggz(Q,\sz)$ is acyclic, too.
Thus, we can order all the vertices of $\ggz$ in the way
$i_1,i_2,\ldots,i_n$ ($n=|I|$) so that there are no arrows
$i_s\lra i_t$ whenever $s\geq t$. Then
$$\one_\bfd=v^{-\sum_{s<t}a_{i_s}a_{i_t}\lr{\udim S_{i_s},\udim S_{i_t}}}
[a_{i_1}S_{i_1}]\,[a_{i_2}S_{i_2}]\cdots [a_{i_n}S_{i_n}]\in \cC_v(Q,\sz)_\az.$$
Consequently, if $x\in \cH_v(Q,\sz)_\az^\prim$, then
$$\bbI_\az(x)=\{x,\one_\az\}=0.$$
\end{proof}

Now suppose $Q$ is acyclic, that is, $A={\frak A}(Q,\sz;q)$ is finite dimensional.
It is well known in this case that indecomposable
$A$-modules are divided into three classes: postprojective, regular and preinjective
ones; see \cite{R84,GR92}. An $A$-module $M$ is called popostprojective (resp., regular
or preinjective) if all its indecomposable summands are popostprojective (resp.,
regular or preinjective).

For each nonzero element $x=\sum_{[M]}\mu_M [M]\in \cH_v(Q,\sz)$,
we say that $x$ is supported on $M$ if $\mu_M\not=0$.
The following proposition is a generalization of \cite[Prop.~6.2]{Hen}
which is stated for Ringel--Hall algebras of affine quivers.

\begin{Prop} \label{prim-supp-reg} Suppose $Q$ is acyclic, and let $x$ be a primitive element
in $\cH_v(Q,\sz)_\az$, where $\az\in{\cF}_{Q,\sz}$. Then $x$ is supported on regular
$A$-modules $M$ with $\udim M=\az$.
\end{Prop}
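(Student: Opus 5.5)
The plan is to use the characterization of primitive elements from \cite[Lem.~3.1]{SVd1} recalled above: $x\in\cH_v(Q,\sz)_\az$ is primitive if and only if $\{x,\Xi(Q,\sz)_\az\}=0$. Write $x=\sum_{[M]}\mu_M[M]$. Since $\{x,[M]\}=\mu_M/a_M$, it suffices to show the following: for every non-regular $M$ with $\udim M=\az$, some nonzero scalar multiple of $[M]$ is a product $[X]\cdot[Y]$ with $X,Y\neq 0$. Such a product lies in $\Xi(Q,\sz)_\az$, so $\mu_M=0$. The key tool will be the following observation. Suppose $M=Y\oplus X$ with $X,Y\neq0$, $\Ext^1_A(Y,X)=0$ and $\Hom_A(X,Y)=0$. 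Then every $L$ with a submodule isomorphic to $X$ and quotient isomorphic to $Y$ is isomorphic to $M$. Hence $[Y]\cdot[X]=v^{\lr{\udim Y,\udim X}}F^M_{Y,X}[M]$, and $F^M_{Y,X}\geq1$ because the canonical summand $X\subseteq M$ is such a submodule.

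\emph{Case 1: $M$ has a nonzero postprojective summand but is not postprojective.} Write $M=P\oplus X$ with $P\neq0$ postprojective and $X\neq 0$ having no postprojective summands. Then $\Hom_A(X,P)=0$. By the Auslander--Reiten formula, $\Ext^1_A(P,X)\cong D\Hom_A(X,\tau P)=0$, since $\tau P$ is postprojective or zero. Taking $Y=P$ in the observation, $[P]\cdot[X]$ is a nonzero multiple of $[M]$.

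\emph{Case 2: $M$ has a nonzero preinjective summand but is not preinjective.} Dually, write $M=X\oplus I$ with $I\neq0$ preinjective and $X\neq 0$ without preinjective summands. Then $\Hom_A(I,X)=0$ and $\Ext^1_A(X,I)\cong D\Hom_A(\tau^{-1}I,X)=0$. Taking $Y=X$ and submodule $I$, the product $[X]\cdot[I]$ is a nonzero multiple of $[M]$. (If $M$ has both postprojective and preinjective summands, Case 1 already applies.)

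\emph{Case 3: $M$ is entirely postprojective, or entirely preinjective.} This is the step I expect to need the most care. First, $M$ cannot be isotypic, i.e.\ $M\not\cong sE$ with $E$ indecomposable postprojective (or preinjective). Indeed, $\udim E$ is a positive real root, so $(s\udim E,s\udim E)>0$. On the other hand, $\az\in\cF_{Q,\sz}$ gives $(\az,\az)=\sum_i a_i(\az,i)\leq 0$. So $M$ has at least two non-isomorphic indecomposable summands. The postprojective component is directed, so I can choose an indecomposable summand $E$ of $M$ that is minimal with respect to the path order among the summands. Let $P_1$ be the sum of all copies of $E$ in $M$, and write $M=P_1\oplus P_2$ with $P_2\neq0$. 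Minimality gives $\Hom_A(P_2,P_1)=0$. Moreover $\tau E$ precedes $E$, so $\Ext^1_A(P_1,P_2)\cong D\Hom_A(P_2,\tau P_1)=0$. The observation with $Y=P_1$, $X=P_2$ then shows that $[P_1]\cdot[P_2]$ is a nonzero multiple of $[M]$. The preinjective case is dual: choose a maximal summand and use the product $[P_2]\cdot[P_1]$ in the reversed order.

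In all cases $\mu_M=0$, so $x$ is supported only on regular modules of dimension vector $\az$.
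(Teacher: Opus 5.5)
Your proof is correct and follows essentially the same approach as the paper. Both arguments kill $\mu_M$ for each non-regular $M$ by exhibiting $[M]$ as a nonzero multiple of a product of Hall basis elements of smaller nonzero dimension vectors (split extensions forced by $\Ext^1$-vanishing between the postprojective, regular and preinjective parts, or between ordered blocks of summands), and then invoke $\{x,\Xi(Q,\sz)_\az\}=0$. The differences are minor: the paper uses $[P][R][I]$ and the ordered product $[P_1]^{t_1}\cdots[P_s]^{t_s}$ where you split off a single block, and you supply the inequality $(\az,\az)\leq 0<(s\,\udim E,s\,\udim E)$ that the paper leaves implicit when asserting $M$ is decomposable. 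The hypothesis $\Hom_A(X,Y)=0$ in your observation is not actually needed.
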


\begin{proof} Write
$$x=\sum_{[M],\,\udim M=\az}\mu_M[M].$$
 Take $M\in A\mod $ with $\udim M=\az$ and set
$$M=P\oplus R\oplus I,$$
 where $P,R$, and $I$ are postprojective, regular and preinjective summands of $M$, respectively.
 Then
$$[M]=v^{-(\lr{\udim P,\udim R}+\lr{\udim P,\udim I}+\lr{\udim R,\udim I})} [P][R][I].$$
Suppose that $M$ is not regular, i.e., $P\not=0$ or $I\not=0$. If two of $P,R,I$ are
nonzero, then $[M]\in\Xi(Q,\sz)_\az$. Thus,
$$0=\{x,[M]\}=\frac{\mu_M }{a_M},$$
 which implies $\mu_M=0$.

It remains to consider the case where $M=P$ or $M=I$. Suppose $M=P$. Since
$\udim M=\az\in {\cF}_{Q,\sz}$, $M=P$ must be decomposable. Thus, $M$ admits a decomposition
$$M=P=t_1P_1\oplus \cdots \oplus t_sP_s,$$
 where $s,t_1,\ldots,t_s\geq 1$ with $t_1+\cdots+t_s>1$, and $P_1,\ldots,P_s$ are
 indecomposable postprojectives satisfying
$$\Hom(P_j,P_i)=0\,\;\text{ whenever }\,\; j>i.$$
 Then
$$[M]=c [P_1]^{t_1}\cdots [P_s]^{t_s}\in \Xi(Q,\sz)_\az, \;\,
\text{ for some $0\not=c\in \bbC$.}$$
 Hence, from $\{x,[M]\}=0$ it follows that
$\mu_M=0$. The case where $M=I$ can be treated similarly.
\end{proof}

\section{Primitive elements in subalgebras of $\cH_v(Q,\sz)$}

In this section we consider the subalgebra $\cH(\scrC)$ of the Ringel--Hall algebra $\cH_v(Q,\sz)$
of $A={\frak A}(Q,\sz;q)$ arising from a full subcategory $\scr C$ of $A\mbox{-mod}$ which is
closed under extensions, and relate primitive elements of $\cH_v(Q,\sz)$ with those
in $\cH(\scrC)$. We are mainly interested in the subcategory of regular $A$-modules,
as well as the subcategory filtered by an orthogonal exceptional pair in $A\mbox{-mod}$.
These two subcategories play a significant role in describing primitive
elements in Ringel--Hall algebra of tame hereditary algebras.

\medskip

As in the previous section, let $Q$ be a quiver with automorphism $\sz$ and
$A={\frak A}(Q,\sz;q)$. Let $\scrC$ be a full subcategory of $A\mod$ which
is closed under extensions. Then $\scrC$ itself is an exact category.
Let $\cH(\scrC)$ be the subspace of $\cH_v(Q,\sz)$ spanned by those
$[M]$ with $M\in\scrC$. Since $\scrC$ is closed under extensions,
$\cH(\scrC)$ becomes a subalgebra of $\cH_v(Q,\sz)$. However, in general,
$\cH(\scrC)$ is not a sub-coalgebra of $\cH_v(Q,\sz)$, but the coalgebra
structure of $\cH_v(Q,\sz)$ restricts to the one on $\cH(\scrC)$. Indeed,
for each $M\in\scrC$, set
$$\dt_{\scrC}([M])=\sum_{{[X],[Y]}\atop{X,Y\in\scrC}}v^{\lan \udim X,\udim Y\ran}
\frac{a_{X} a_{Y}}{a_M} F_{X,Y}^M\,  [X]\otimes [Y].$$
 It is easy to check that $\dt_\scrC$ is a comultiplication on $\cH(\scrC)$ with
which $\cH(\scrC)$ becomes a coalgebra. Further, $\cH(\scrC)$ inherits an $\bbN I$-grading
from $\cH_v(Q,\sz)$, i.e.,
$$ \cH(\scrC)=\bigoplus_{\bfd\in\bbN I} \cH(\scrC)_{\bfd}.$$
For each $x=\sum_{[M]}x_M[M]\in \cH_v(Q,\sz)$, write
$$x_\scrC=\sum_{[M], M\in\scrC}x_M[M]\in \cH(\scrC).$$

\medskip

\begin{Prop} \label{prim-in-subcat} Let $p\in \cH_v(Q,\sz)_\bfd$ be a primitive element. Then
$p_\scrC$ is a primitive element in $\cH(\scrC)_\bfd$.
\end{Prop}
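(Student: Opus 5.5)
The plan is to compare $\dt_\scrC(p_\scrC)$ with the part of $\dt(p)$ that lies in $\cH(\scrC)\otimes\cH(\scrC)$. Let $\pi:\cH_v(Q,\sz)\otimes \cH_v(Q,\sz)\to \cH(\scrC)\otimes\cH(\scrC)$ be the projection that kills all basis tensors $[X]\otimes[Y]$ with $X\notin\scrC$ or $Y\notin\scrC$. I would establish two facts and then combine them.

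\emph{Step 1.} For every $M\in A\mod$, if $M\notin\scrC$ then $\pi(\dt([M]))=0$. Indeed, a nonzero term $[X]\otimes[Y]$ in $\dt([M])$ requires $F^M_{X,Y}\neq0$, i.e.\ an exact sequence $0\to Y\to M\to X\to 0$. If $X,Y\in\scrC$, then $M\in\scrC$ because $\scrC$ is closed under extensions.

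\emph{Step 2.} For $M\in\scrC$, $\pi(\dt([M]))=\dt_\scrC([M])$. This holds by the very definition of $\dt_\scrC$, since the coefficients are the same.

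\emph{Step 3.} Write $p=p_\scrC+p'$, where $p'$ is supported on modules outside $\scrC$. By Steps 1 and 2,
\[
\pi(\dt(p))=\dt_\scrC(p_\scrC)+\pi(\dt(p'))=\dt_\scrC(p_\scrC).
\]
On the other hand, $\dt(p)=p\otimes 1+1\otimes p$, and $1=[0]$ lies in $\cH(\scrC)$ because $0\in\scrC$ (the empty extension). Hence
\[
\pi(p\otimes1+1\otimes p)=p_\scrC\otimes 1+1\otimes p_\scrC.
\]
Comparing the two expressions gives $\dt_\scrC(p_\scrC)=p_\scrC\otimes1+1\otimes p_\scrC$. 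Since $p_\scrC$ is homogeneous of degree $\bfd$, this shows that $p_\scrC$ is primitive in $\cH(\scrC)_\bfd$.

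The only delicate point is Step 1: the extension-closure hypothesis must be used in the right direction, namely that the middle term of a short exact sequence with both end terms in $\scrC$ lies in $\scrC$. We also need $\scrC$ to contain the zero module, which is implicit in $\cH(\scrC)$ being a unital subalgebra. Everything else is bookkeeping with the basis.
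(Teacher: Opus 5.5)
Your proof is correct and is essentially the paper's argument. The paper fixes nonzero $X,Y\in\scrC$ and compares the coefficient $\sum_{[M]}\frac{\mu_M}{a_M}F^M_{X,Y}$ of $[X]\otimes[Y]$ in $\dt(p)$ with the corresponding coefficient in $\dt_\scrC(p_\scrC)$; it uses extension-closure exactly as in your Step 1 to discard the terms with $M\notin\scrC$, and your projection $\pi$ simply packages this comparison, while your explicit remark that $0\in\scrC$ is needed makes visible an assumption the paper uses tacitly.
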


\begin{proof} Write
$$p=\sum_{[M],\,\udim M=\bfd}\mu_M [M].$$
Then
$$p_\scrC=\sum_{{[M],\,M\in\scrC}\atop{\udim M=\bfd}}\mu_M [M]\in \cH(\scrC)_\bfd.$$
 By the definition,
$$\aligned
\dt(p)&=\sum_{[M]}\mu_M \dt([M])=\sum_{[M]}\mu_M\sum_{[X],[Y]} v^{\lan \udim X,\udim Y\ran}
\frac{a_{X} a_{Y}}{a_M} F_{X,Y}^M [X]\otimes [Y],  \\
&=p\otimes 1+1\otimes p+ \sum_{{[X],[Y]}\atop{X\not=0\not=Y}}
v^{\lan \udim X,\udim Y\ran}a_Xa_Y\big(\sum_{[M]}\frac{\mu_M}{a_M}F_{X,Y}^M \big)[X]\otimes [Y].
\endaligned$$
 Since $p$ is primitive, it follows that for all nonzero representations $X,Y$, we have
$$\sum_{[M]}\frac{\mu_M}{a_M}F_{X,Y}^M=0.$$
Moreover, if $X,Y\in\scrC$, then $M\in \scrC$ whenever $F_{X,Y}^M\not=0$ since
$\scrC$ is closed under extensions. Then
 $$\sum_{[M], M\in\scrC}\frac{\mu_M}{a_M}F_{X,Y}^M=\sum_{[M]}\frac{\mu_M}{a_M}F_{X,Y}^M=0$$
 Therefore,
$$\aligned
\dt_\scrC(p_\scrC)&=\sum_{[M],\,M\in\scrC}\mu_M \dt_\scrC([M])=\sum_{[M],\,M\in\scrC}\mu_M
\sum_{{[X],[Y]}\atop{X,Y\in\scrC}} v^{\lan \udim X,\udim Y\ran}
\frac{a_{X} a_{Y}}{a_M} F_{X,Y}^M [X]\otimes [Y]\\
&=p_\scrC\otimes 1+1\otimes p_\scrC+ \sum_{{[X],[Y]}\atop{0\not=X,Y\in\scrC}}
v^{\lan \udim X,\udim Y\ran}a_Xa_Y\big(\sum_{[M],\,M\in\scrC}\frac{\mu_M}{a_M}F_{X,Y}^M \big)[X]\otimes [Y]\\
&=p_\scrC\otimes 1+1\otimes p_\scrC,
\endaligned$$
 that is, $p_\scrC$ is a primitive element in $\cH(\scrC)$.
\end{proof}

For each $\bfd\in\bbN I$, set
$$\cH(\scrC)_{\bfd}^{\prim}=\{x\in \cH(\scrC)_{\bfd}\mid \dt_{\scrC}(x)=x\otimes 1+1\otimes x \}.$$

In the rest of this section, we assume that $Q$ is acyclic. Then
the full subcategory $\scrR_A$ of $A\mod$ consisting of regular modules is
closed under extensions; see \cite{R84}. This gives a subalgebra $\cH(\scrR_A)$
of $\cH_v(Q,\sz)$, which is also a coalgebra with comultiplication $\dt_{\scrR}$;
see \cite[Prop.~6.3]{Hen} for the affine quiver case. Moreover, $\cH(\scrR_A)$
is also $\bbN I$-graded by dimension vectors
$$\cH(\scrR_A)=\bigoplus_{\bfd\in\bbN I}\cH(\scrR_A)_{\bfd}.$$
 By Proposition \ref{prim-supp-reg}, each primitive element in $\cH_v(Q,\sz)$
is supported on regular $A$-modules. Thus, for each $\bfd\in\bbN I$, we have the inclusion
$$\cH_v(Q,\sz)_{\bfd}^{\prim}\subseteq \cH(\scrR_A)_{\bfd}^{\prim}.$$
The primitive elements in $\cH(\scrR_A)$ will be called the regular primitive elements,
which are called cuspidal regular functions in \cite{Hen}.

 Furthermore, we have the element
$$\one_\bfd^\reg=\sum_{{[M],\,M\in\scrR_A}\atop{\udim M=\bfd}}[M]\in \cH(\scrR_A)_\bfd.$$
 and the map
$$\bbI^\reg_\bfd: \cH(\scrR_A)_\bfd\lra \bbC,\; [M]\lmto \frac{1}{a_M}$$
By \cite[Thm~1(ii)]{Zh-p}, $\one_\bfd^\reg\in \cC_v(Q,\sz)_\bfd$. Hence,
if $p\in \cH_v(Q,\sz)_{\bfd}$ is primitive, then
$$\bbI^\reg_\bfd(p)=\{p,\one_\bfd^\reg\}=0.$$
As conclusion, we obtain the following result.

\begin{Prop} \label{inclusion-reg-prim} For each $\az\in\cF_{Q,\sz}$,
$$\cH_v(Q,\sz)_{\az}^{\prim}\subseteq \cH(\scrR_A)_{\az}^{\prim}\cap \Ker\bbI^\reg_\az.$$

\end{Prop}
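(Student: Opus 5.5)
The inclusion splits into two independent parts, and each follows from results already established above.

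For the first part, take $p\in\cH_v(Q,\sz)_\az^\prim$. By Proposition \ref{prim-supp-reg}, $p$ is supported only on regular modules $M$ with $\udim M=\az$. This means $p=p_{\scrR_A}$, so $p$ lies in $\cH(\scrR_A)_\az$. Since $Q$ is acyclic, $\scrR_A$ is closed under extensions. Proposition \ref{prim-in-subcat} applied to $\scrC=\scrR_A$ then shows that $p_{\scrR_A}=p$ is primitive for $\dt_\scrR$. Hence $p\in\cH(\scrR_A)_\az^\prim$.

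For the second part, recall the characterization of primitive elements from \cite[Lem.~3.1]{SVd1}: a homogeneous $x$ of degree $\az$ is primitive if and only if $\{x,\Xi(Q,\sz)_\az\}=0$. By \cite[Thm~1(ii)]{Zh-p}, $\one^\reg_\az$ lies in the composition algebra $\cC_v(Q,\sz)_\az$.

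The point to check is that $\one^\reg_\az$ in fact lies in $\Xi(Q,\sz)_\az$. Every element of $\cC_v(Q,\sz)_\az$ is a linear combination of monomials $[S_{j_1}]\cdots[S_{j_m}]$ whose degrees sum to $\az$. Since $\az\in\cF_{Q,\sz}$ and $\cF_{Q,\sz}$ contains no simple root, $\az$ is not a simple root, so $m\geq 2$. Each such monomial therefore factors as a product of two elements of nonzero degree strictly less than $\az$, and so lies in $\Xi(Q,\sz)_\az$. Consequently
$$\bbI^\reg_\az(p)=\{p,\one^\reg_\az\}=0.$$

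The only genuine input is the membership $\one^\reg_\az\in\cC_v(Q,\sz)_\az$ from \cite{Zh-p}, which we take as given. Everything else is formal, so I expect no substantial obstacle beyond the remark that $\az$ is not simple, which is exactly what forces $\cC_v(Q,\sz)_\az\subseteq\Xi(Q,\sz)_\az$.
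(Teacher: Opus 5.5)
Your proposal is correct and follows the paper's own argument. For the first inclusion you use Proposition \ref{prim-supp-reg} together with Proposition \ref{prim-in-subcat}. For the second you use Zhang's result $\one^\reg_\az\in\cC_v(Q,\sz)_\az$ plus the orthogonality of primitive elements to $\Xi(Q,\sz)_\az$. The only difference is that you spell out the step the paper leaves implicit: $\cC_v(Q,\sz)_\az\subseteq\Xi(Q,\sz)_\az$ holds because $\az$ is not a simple root, since $(i,i)=2\vez_i>0$ when $\ggz$ has no loops.
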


Now we consider the full subcategory of $A\mod$ arising from an orthogonal exceptional pair.
An indecomposable $A$-module $E$ is called exceptional if $\Ext^1_A(E,E)=0$.
A pair $(E_1,E_2)$ of exceptional $A$-modules is called an orthogonal exceptional pair if
$$\Hom_A(E_1,E_2)=0=\Hom_A(E_2,E_1)\;\,\text{ and }\,\;\Ext_A^1(E_2,E_1)=0.$$
Given an orthogonal exceptional pair $(E_1,E_2)$, let ${\scr E}:=\scrE(E_1,E_2)$ denote the
full subcategory of $A\mod$ consisting of $A$-modules $M$ such that there exists an exact sequence
$$0\lra s E_2\lra M\lra t E_1\lra 0$$
 for some $s,t\geq 0$.

\begin{Prop} Let $(E_1,E_2)$ be an orthogonal exceptional pair and
$\scrE=\scrE(E_1,E_2)$. Then for each $\bfd\in\bbN I$,
$$\one_\bfd^{\scrE}\in C_v(Q,\sz)_\bfd.$$
\end{Prop}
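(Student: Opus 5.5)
We will show that $\one^{\scrE}_\bfd$ is, up to a nonzero scalar, a product of divided-power-type elements $\one_{t\udim E_1}$-like sums coming from $E_1$ and $E_2$, which lie in $\cC_v(Q,\sz)$. The key facts to use are these. Every exceptional $A$-module $E$ has $[E]\in\cC_v(Q,\sz)$, and so does $[sE]$, since $[E]^s$ is a nonzero multiple of $[sE]$ because $\Ext^1(E,E)=0$. This is the standard result of Ringel that exceptional modules belong to the composition algebra; in the quiver-with-automorphism setting it follows by reflection functors or the braid group action on exceptional sequences, and we take it as known.

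The first step is to describe $\scrE$. Because $(E_1,E_2)$ is orthogonal exceptional, $\scrE$ is an abelian, extension-closed exact subcategory equivalent to the module category of the hereditary algebra of a valued Kronecker-type quiver with two vertices. Its simple objects are $E_1$ and $E_2$, with $E_2$ the sink and $E_1$ the source, since $\Ext^1(E_2,E_1)=0$. The dimension vector of $M\in\scrE$ equals $t\,\udim E_1+s\,\udim E_2$, and the pair $(s,t)$ is determined by $\udim M$ because $\udim E_1$ and $\udim E_2$ are linearly independent. Hence $\one^\scrE_\bfd$ is either $0$ or equals $\sum[M]$ over all $M\in\scrE$ fitting into $0\to sE_2\to M\to tE_1\to 0$ with $s,t$ fixed.

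The second step is the computation. For fixed $s,t$ we form the product $[tE_1]\cdot[sE_2]=v^{\lr{t\udim E_1,s\udim E_2}}\sum_{[M]}F^M_{tE_1,sE_2}[M]$. In $M$ the submodule isomorphic to $sE_2$ is unique: it is the trace of $E_2$ in $M$, because $\Hom(E_2,E_1)=0$ forces any map $sE_2\to M$ to land in the given copy of $sE_2$. It follows that $F^M_{tE_1,sE_2}=1$ for every $M$ in the class. Therefore
$$[tE_1]\cdot[sE_2]=v^{\lr{t\udim E_1,s\udim E_2}}\one^\scrE_\bfd.$$
Since $[tE_1],[sE_2]\in\cC_v(Q,\sz)$, this gives $\one^\scrE_\bfd\in\cC_v(Q,\sz)_\bfd$. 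This mirrors the proof of Proposition \ref{ann-int-map}, where the simples are replaced by $E_1,E_2$.

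The main obstacle is justifying $[E]\in\cC_v(Q,\sz)$ for exceptional $E$ over ${\frak A}(Q,\sz;q)$. If it is not directly citable, we would argue as follows. Exceptional modules over a hereditary algebra are determined by their real-root dimension vectors and are reachable from simples through reflection functors and APR tilts, and the Lusztig/Ringel reflection operators preserve the composition algebra, which yields the claim. A smaller point to check carefully is the uniqueness of the submodule $sE_2$, which uses $\Hom(E_2,E_1)=0$ and $\Hom(E_2,E_2)=\End(E_2)$ being a division ring.
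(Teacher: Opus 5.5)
Your proof is correct, but it follows a different and more direct route than the paper. The paper passes through the equivalence $\scrE\simeq B\mod$ (Geigle--Lenzing, Schofield), where $B$ is a hereditary algebra with two simples. It quotes \cite[Thm~1(ii)]{Zh-p} to get $\one_{\bf e}\in\cC_v(B)$, transports this to the subalgebra of $\cH(\scrE)$ generated by $[E_1],[E_2]$, and finishes with Ringel's theorem that $[E_1],[E_2]\in\cC_v(Q,\sz)$.

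You stay inside $A\mod$ and use only $\Hom_A(E_2,E_1)=0$. Any submodule of $M$ isomorphic to $sE_2$ with quotient $tE_1$ lies in the trace of $E_2$, so it is unique. Hence $F^M_{tE_1,sE_2}=1$ for every relevant $M$, and $\one^\scrE_\bfd=v^{-\lr{t\,\udim E_1,\,s\,\udim E_2}}[tE_1]\cdot[sE_2]$. This is the argument of Proposition~\ref{ann-int-map} with $E_1,E_2$ playing the role of simples. For the two-vertex algebra $B$, it is exactly the fact $\one_{\bf e}\in\cC_v(B)$ that the paper quotes from Zhang.

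What each route buys:
\begin{itemize}
\item Your version gives an explicit formula and needs neither the equivalence $\scrE\simeq B\mod$ nor Zhang's theorem. It also makes explicit the step $[E_i]^s\in\bbC^\times[sE_i]$, which the paper leaves implicit.
\item The paper's version is shorter given the citations. It also fits the embedded-module-category viewpoint reused in Lemma~\ref{key-lemma}.
\end{itemize}

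Both routes depend on Ringel's theorem that exceptional modules lie in the composition algebra, so cite it as the paper does (via \cite[Thm~2]{Zh-p}). Your fallback via reflection functors and APR tilts is not an argument as it stands. It is not clear that every exceptional module, in particular a regular one, is reached from a simple by admissible sink/source reflections.

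Finally, the linear independence of $\udim E_1$ and $\udim E_2$, which you assert without proof, does hold. We have $\lr{\udim E_2,\udim E_1}=0$ while $\lr{\udim E_2,\udim E_2}=\dim_{\bbF_q}\End(E_2)>0$. This rules out $\udim E_1$ being a positive multiple of $\udim E_2$, so $(s,t)$ is determined by $\bfd$ and no module is counted twice.
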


\begin{proof} By the definition of $\scrE$,  $\one_\bfd^{\scrE}\not=0$ only if
$$\bfd=t\,\udim\, E_1+s\,\udim E_2\;\text{ for some $s,t\geq 0$}.$$
 Thus, we may suppose that $\bfd=t\,\udim E_1+s\,\udim E_2$ and write ${\bf e}=(t,s)$.

It is well known from \cite{GL, Scho} that there is an equivalence $\cF:\scrE\ra B\mod$
taking $E_i\mapsto S_i, \,i=1,2$, where $B$ is a finite dimensional hereditary
$\bbF_q$-algebra with two simples $S_1$ and $S_2$. Then $F$ induces an isomorphism
$\cH_v(B)\ra \cH(\scrE)$ of Ringel--Hall algebras. By \cite[Thm~1(ii)]{Zh-p},
$\one_{\bf e}$ lies in the composition algebra $\scrC_v(B)$ of $B$,
the subalgebra of $\cH_v(B)$ generated by $[S_1]$ and $[S_2]$.
This implies that $\one_\bfd^{\scrE}$ lies in the subalgebra of $\cH(\scrE)$ generated
by $[E_1]$ and $[E_2]$. By a result of Ringel (see \cite[Thm~2]{Zh-p}),
$[E_1], [E_2]\in \cC_v(Q,\sz)$. Therefore,
$$\one_\bfd^{\scrE}\in \cC_v(Q,\sz)_\bfd.$$
\end{proof}

\begin{Coro} \label{kernel-OEP}
Let $(E_1,E_2)$ be an orthogonal exceptional pair in $A\mod$ and
$\scrE=\scrE(E_1,E_2)$, and let $\az\in\cF_{Q,\sz}$. If $p\in \cH_v(Q,\sz)_\az$ is primitive, then
$$\{p,\one_\az^\scrE\}=\{p_\scrE,\one_\az^\scrE\}=0.$$
\end{Coro}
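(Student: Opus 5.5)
The corollary has two equalities, and I will prove them separately.

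\emph{First equality.} By the definition of the Green form, $\{[M],[N]\}=0$ unless $[M]=[N]$. The element $\one_\az^\scrE$ is, by definition, the sum of the isoclasses $[M]$ with $M\in\scrE$ and $\udim M=\az$. Write $p=\sum_{[M]}\mu_M[M]$. Then
$$\{p,\one_\az^\scrE\}=\sum_{{[M],\,M\in\scrE}\atop{\udim M=\az}}\frac{\mu_M}{a_M}=\{p_\scrE,\one_\az^\scrE\},$$
because only the terms of $p$ supported on modules in $\scrE$ contribute. This needs nothing beyond the definitions of $p_\scrE$ and $\one^\scrE_\az$.

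\emph{Second equality.} I will show that $\{p,\one_\az^\scrE\}=0$ using the preceding proposition, which gives $\one_\az^\scrE\in\cC_v(Q,\sz)_\az$. The key point is that every element of $\cC_v(Q,\sz)_\az$ lies in $\Xi(Q,\sz)_\az$.

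Here I must check that $\az$ is not a simple root $i$. Since $\az\in\cF_{Q,\sz}$ and $Q$ is acyclic, $\Gamma$ has no loops, so $(i,i)=2\vez_i>0$. Hence no simple root $i$ satisfies $(\az,i)\leq 0$ with $\az=i$, and so $\az\neq i$ for every $i\in I$.

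It follows that every monomial $[S_{i_1}]\cdots[S_{i_m}]$ of degree $\az$ has $m\geq 2$. Such a monomial factors as $[S_{i_1}]\cdot([S_{i_2}]\cdots[S_{i_m}])$ with both factors of nonzero degree different from $\az$. Therefore $\cC_v(Q,\sz)_\az\subseteq\Xi(Q,\sz)_\az$, and in particular $\one_\az^\scrE\in\Xi(Q,\sz)_\az$. The primitivity criterion of \cite[Lem.~3.1]{SVd1}, recalled in Section~2, then gives $\{p,\one_\az^\scrE\}=0$. This is the same argument as in Proposition~\ref{ann-int-map}.

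The main obstacle is already handled by the preceding proposition, namely the membership of $\one^\scrE_\az$ in the composition algebra. The only remaining subtlety is excluding the degenerate case $\az=i$, which I settle by the loop-free observation above.
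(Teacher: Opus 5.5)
Your proof is correct and is essentially the argument the paper intends, since the corollary is stated without separate proof as a direct consequence of the preceding proposition. The first equality is immediate from the definition of the Green form. The second follows from $\one_\az^\scrE\in\cC_v(Q,\sz)_\az\subseteq\Xi(Q,\sz)_\az$ (using that $\az\in\cF_{Q,\sz}$ is not a simple root), exactly as in Proposition~\ref{ann-int-map}.
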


%\begin{Rem} By \cite{DR}, if $A$ is a tame hereditary $\bbF_q$-algebra, then
%there is an orthogonal exceptional pair $(E_1,E_2)$ in $A\mod$ such that
%$\scrE=\scrE(E_1,E_2)$ is equivalent to $\bbF_q K_2\mod$, ${\widetilde A}_{1,1}\mod$,
%or $({\widetilde A}_{1,1})^{\rm op}\mod$, where $K_2$ denotes the Kronecker
%quiver $\cdot\! \rightrightarrows\!\cdot$ and ${\widetilde A}_{1,1}={\frak A}(Q,\sz;q)$
%denotes the algebra given in Example \ref{aff-A11}. Indeed, these algebras
%are the only tame hereditary algebras with two simples (up to Morita equivalence)
%and serve as minimal models for tame hereditary algebras.
%\end{Rem}

\bigskip

\section{Primitive elements in the cyclic quiver case}

In this section we consider the Ringel--Hall algebra of a cyclic quiver
$C_r$ with $r$ vertices and recall from \cite{Sch, Hub1, Hen} a description of
primitive elements in the subalgebra spanned by nilpotent representations of $C_r$,
as well as of those in the classical Hall algebra.

For $r\geq 1$, let $C_r$ denote the cyclic quiver
%\begin{center}
%\begin{pspicture}(0,-.6)(2,2.6)
%\psset{xunit=.8cm,yunit=.7cm}
%\psdot*(1,0) \psdot*(2,0) \psdot*(3,1) \psdot*(3,2)
%\psdot*(2,3) \psdot*(1,3) \psdot*(0,2) \psdot*(0,1)
%\uput[l](0,1){$_{r-2}$} \uput[l](0,2){$_{r-1}$}
%\uput[u](1,3){$_r$} \uput[u](2,3){$_1$}
%\uput[r](3,2){$_2$} \uput[r](3,1){$_3$}
%\uput[d](2,0){$_4$} \uput[d](1,0){$_5$}
%\psline{->}(1,3)(2,3) \psline{->}(2,3)(3,2)
%\psline{->}(3,2)(3,1) \psline{->}(3,1)(2,0)
%\psline{->}(2,0)(1,0)
%\psline[linestyle=dotted,linewidth=1pt](1,0)(0,1)
%\psline{->}(0,1)(0,2) \psline{->}(0,2)(1,3)
%\end{pspicture}
%\end{center}

\begin{center}
\begin{pspicture}(-3,-.6)(3.6,1.6)
\psset{linewidth=0.6pt, arrowsize=3.8pt}
\psset{xunit=.8cm,yunit=.7cm}
\psdot*(-3,0) \psdot*(-1.7,0) \psdot*(-.4,0) \psdot*(2.3,0)
\psdot*(3.6,0)\psdot*(.3,1.2) \uput[u](.3,1.2){$_{r}$}
\uput[d](-3,0){$_1$} \uput[d](-1.7,0){$_2$} \uput[d](-.4,0){$_3$}
\uput[d](2.3,0){$_{r-2}$} \uput[d](3.6,0){$_{r-1}$}
\psline{->}(-3,0)(-1.7,0) \psline{->}(-1.7,0)(-.4,0)
\psline(-.4,0)(0,0)
\psline[linestyle=dotted,linewidth=1pt](0,0)(1.4,0)\psline{->}(1.4,0)(2.3,0)
\psline{->}(2.3,0)(3.6,0) \psline{->}(3.6,0)(.3,1.2)
\psline{->}(.3,1.2)(-3,0)
\end{pspicture}
\end{center}
with vertex set $I=\bbZ/r\bbZ=\{1,2,\ldots,r\}$ and arrow set
$\{i\to i+1\mid i\in I\}$. By convention, $C_1$ is the Jordan quiver with
only one vertex and one loop. For each $r>1$,
$$\Phi_\im^+(C_r)=\bbN\dz\;\,\text{ with }\;\, \dz=(1,\ldots,1)\in\bbN I.$$

By $\rep C_r=\rep_{\bbF_q} C_r$ we denote the category of finite dimensional
representations of $C_r$ over $\bbF_q$, which are identified with finite
dimensional left modules over the path algebra $\bbF_qC_r$.

Let $\rep^0 C_r$ denote the full subcategory of $\rep C_r$ consisting
of nilpotent representations of $C_r$. For each vertex $i\in I$, we have
a one-dimensional nilpotent representation $S_i$. The $S_i$ form a complete set
of simple objects in $\rep^0 C_r$. Further, for each integer $l\geq 1$,
there is a unique (up to isomorphism) indecomposable
representation $S_i[l]$ in $\rep^0 C_r$ of length $l$ with top $S_i$. It
is well known that $S_i[l]$, $i\in I, l\geq 1$, yield all
isoclasses of indecomposable representations in $\rep^0 C_r$, and that
the Auslander--Reiten quiver of $\rep^0 C_r$ is a tube of rank $r$.

To $C_r$ we have the associated Ringel--Hall algebra $\cH_v(C_r)$ of $C_r$,
as well as its subalgebra $\cH^0_v(C_r)$ spanned by $[V]$ with $V\in \rep^0 C_r$.
Both of them are bialgebras. By \cite{Sch,Hub1}, $\cH^0_v(C_r)$ is generated by
its primitive elements.

In case $r=1$, $\cH^0_v(C_1)$ is the classical Hall algebra studied in \cite{St,Ha,Mac,Zel},
which is $\bbN$-graded by dimensions. For each partition $\lz=(\lz_1,\ldots,\lz_m)$
with $\lz_1\geq \cdots\geq\lz_m\geq 1$, set $\ell(\lz)=m$, called the length of $\lz$.
Further, set
$$I_\lz=\bigoplus_{i=1}^m S_1[\lz_i]\in \rep^0 C_1.$$
Then, by writing $\lz$ in the ``exponential form'' as $\lz=(1^{t_1},2^{t_2},\ldots)$,
we have
\begin{equation} \label{auto-group-partition}
a_\lz(q):=|{\rm Aut}(I_\lz)|=q^{|\lz|+2n(\lz)}\prod_{i\geq 1}\prod_{j=1}^{t_i}(1-q^{-j}),
\end{equation}
 where $|\lz|=\lz_1+\lz_2+\cdots+\lz_m$ and $n(\lz)=\sum_{i=1}^{\ell(\lz)}(i-1)\lz_i$;
 see \cite[Ch.~II~(1.6)]{Mac}.

Moreover, for each $n\geq 1$, $\dim \cH^0_v(C_1)^\prim_n =1$, and
\begin{equation} \label{prim-elt-Jordan}
p_n=\sum_{\lz \vdash n}\bigg(\prod_{s=1}^{\ell(\lz)-1}(1-q^s)\bigg)[I_\lz]\in \cH^0_v(C_1)^\prim_n;
\end{equation}
see \cite[Ch.~III, Sect.~7]{Mac} or \cite[\S5]{Hub1}. In other words,
$\cH^0_v(C_1)$ is generated by the primitive elements $p_n$.

\medskip

Now suppose $r>1$. Following Schiffmann \cite{Sch} and Hubery \cite{Hub1},
$\cH^0_v(C_r)$ admits a decomposition
$$\cH^0_v(C_r)=\cC_v(C_r)\otimes \bbC[c_1,c_2,\ldots],$$
 where $\cC_v(C_r)$ is the composition algebra of $C_r$ (i.e., the subalgebra
of $\cH^0_v(C_r)$ generated by the $[S_i]$), and $\bbC[c_1,c_2,\ldots]$
is an infinite polynomial ring with
$$c_n=(-1)^nv^{-2rn}\sum_{{[M]: \udim M=n\dz}\atop{{\rm soc} M}\;\text{square free}}
(-1)^{{\rm dim}\,{\rm End}(M)}a_M [M]$$
central in $\cH^0_v(C_r)$. Also, it is proved in \cite[Prop.~9]{Hub1} that
for each $n\geq 1$,
$$\dt(c_n)=\sum_{s=0}^n c_s\otimes c_{n-s},$$
 where $c_0=1$. Further, define inductively
$$x_n=nc_n-\sum_{s=1}^{n-1} x_s c_{n-s}.$$
Then, by \cite[Cor.~10]{Hub1}, $x_n$ are primitive elements in $\cH^0_v(C_r)$.
This together with \cite[Thm~5.4]{Hen} implies that
$$\cH^0_v(C_r)^{\prim}_{n\dz}=\bbC x_n,\;\;\forall\, n\geq 1.$$
Then, by \cite[Lem.~12]{Hub2} (see also \cite[Lem.~2.2.1]{DDF} and \cite[Lem.~5.5]{Hen}),
we have
$$x_n=v^{n-2rn}(v^n-v^{-n})\sum_{i=1}^r [S_i[rn]]+y_n,$$
 where $y_n$ is a linear combination of $[M]$ with $M$ decomposable and
 $\udim M=n\dz$. Finally, set
\begin{equation} \label{prim-elt-cyc-r}
p_n^{(r)}=\frac{v^{2rn-n}}{v^n-v^{-n}}x_n=\sum_{i=1}^r [S_i[rn]]+\frac{v^{2rn-n}}{v^n-v^{-n}}y_n,
\;\,\forall\, n\geq 1,
\end{equation}
which are called the normalized primitive elements in \cite[6.3.1]{Hen}.
If $r=1$, then $p_n^{(1)}$ coincides with $p_n$ given in \eqref{prim-elt-Jordan}.
However, as pointed out in \cite[Rem.~5.6]{Hen}, there  is no known closed
formula for $p_n^{(r)}$ in general.

For each $1\leq t<r$, there is an embedding $\Upsilon_{r,t}: \rep^0 C_t\ra \rep^0 C_r$
taking
$$S_i\lmto \begin{cases} S_i, & \text{if $1\leq i<t$},\\
                S_t[r-t+1], & \text{if $i=t$};\end{cases}$$
 see, e.g., \cite[\S4]{Hub1}. This then gives rise to an embedding of algebras
$$\zeta_{r,t}:\cH^0_v(C_t)\lra\cH^0_v(C_r).$$
Thus, if we denote by $\cA_t$ the full subcategory of $\rep^0 C_r$
consisting of $\Upsilon_{r,t}(V)$ with $V\in\rep^0 C_t$, then $\cA_t$ is equivalent
to $\rep^0 C_t$ and the subalgebra $\cH(\cA_t)$ of $\cH^0_v(C_r)$ is isomorphic to
$\cH^0_v(C_t)$. As a consequence of Proposition \ref{prim-in-subcat}, we obtain
the following result.

\begin{Prop} For each $1\leq t<r$ and $n\geq 1$, $(p_n^{(r)})_{\cA_t}$ is a primitive element
in $\cH(\cA_t)$. In other words,
$$p_n^{(r)}=\zeta_{r,t}(p_n^{(t)})+\text{other terms}.$$
Here ``other terms" means a combination of $[V]$ with $V\not\in\cA_t$.
\end{Prop}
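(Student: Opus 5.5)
The plan is to apply Proposition \ref{prim-in-subcat} to the full subcategory $\scrC=\cA_t$ of $\rep^0 C_r$, and then to identify the resulting primitive element with $\zeta_{r,t}(p_n^{(t)})$ using the one-dimensionality of primitive spaces in $\cH^0_v(C_t)$.

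First, I will check that $\cA_t$ is closed under extensions in $\rep^0 C_r$. The image of $\Upsilon_{r,t}$ is the subcategory of modules filtered by $S_1,\ldots,S_{t-1},S_t[r-t+1]$. These objects are pairwise orthogonal bricks, so the category they filter is extension-closed; this is the standard fact used in \cite[\S4]{Hub1}. Proposition \ref{prim-in-subcat} is stated for $\cH_v(Q,\sz)$, but its proof only uses two things: the formula for $\dt$ and closure of $\scrC$ under extensions. So it applies verbatim inside the bialgebra $\cH^0_v(C_r)$. Hence $(p_n^{(r)})_{\cA_t}$ is primitive in $\cH(\cA_t)$, which is the first assertion.

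Second, I will transport this element through the equivalence $\rep^0 C_t\simeq\cA_t$. Under this equivalence $\dz_t=(1,\ldots,1)\in\bbN^t$ corresponds to the dimension vector $\dz$ of $C_r$. The reason is that $\udim S_t[r-t+1]$ covers the vertices $t,\ldots,r$ once each, so $n\dz_t\mapsto n\dz$. One also needs $\zeta_{r,t}$ to intertwine both the twisted multiplications and the restricted comultiplication $\dt_{\cA_t}$. This holds because Hall numbers and automorphism groups are preserved by the exact fully faithful embedding, and Euler forms agree on $\cA_t$, since the images are orthogonal bricks with the same Hom/Ext dimensions as in $C_t$. Therefore $\zeta_{r,t}^{-1}((p_n^{(r)})_{\cA_t})$ lies in $\cH^0_v(C_t)^{\prim}_{n\dz_t}$. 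This space equals $\bbC x_n=\bbC p_n^{(t)}$ by \cite[Thm~5.4]{Hen} for $t>1$, and for $t=1$ it is one-dimensional and spanned by $p_n$. So $(p_n^{(r)})_{\cA_t}=c\,\zeta_{r,t}(p_n^{(t)})$ for some scalar $c$.

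Finally, I will show $c=1$ by comparing the coefficients of indecomposables. In $p_n^{(r)}$ the indecomposables of dimension $n\dz$ are exactly the $S_i[rn]$, each with coefficient $1$, by \eqref{prim-elt-cyc-r}. The indecomposables of $\cA_t$ of this dimension are the $t$ objects $\Upsilon_{r,t}(S_j[tn])$, $j=1,\ldots,t$, and each is some $S_i[rn]$. Hence each appears in $(p_n^{(r)})_{\cA_t}$ with coefficient $1$. Since $p_n^{(t)}$ also has coefficient $1$ on each $S_j[tn]$, we get $c=1$, which gives the displayed identity.

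The main obstacle I expect is the compatibility of the twist. The Euler form $\lr{-,-}$ of $C_r$ restricted to $\cA_t$ must match that of $C_t$, otherwise $\zeta_{r,t}$ would only be an isomorphism of untwisted Hall algebras. I would verify this directly on the generating bricks via $\dim\Hom-\dim\Ext^1$, using that $\Upsilon_{r,t}$ is exact and fully faithful, with Ext preserved by extension-closedness.
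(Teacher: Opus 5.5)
Your proposal is correct and follows the paper's route: the paper obtains the statement directly as a consequence of Proposition \ref{prim-in-subcat}, applied to the extension-closed subcategory $\cA_t$. The extra steps you supply are exactly what the paper leaves implicit in its ``In other words'': compatibility of $\zeta_{r,t}$ with the twisted multiplication and comultiplication (Euler forms, Hall numbers, automorphism groups), one-dimensionality of $\cH^0_v(C_t)^{\prim}_{n\dz}$, and fixing the scalar $c=1$ via the coefficient $1$ on the indecomposables $S_i[rn]$.
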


We remark that there are several ways to embed $\rep^0 C_t$ into $\rep^0 C_r$.
For example, for each sequence ${\bf r}=(r_1,\ldots,r_t)$ of positive
integers with $r=r_1+\cdots +r_t$ and a fixed vertex $1\leq j\leq r$, there is an embedding
$\Upsilon_{\bf r}: \rep^0 C_t\ra \rep^0 C_r$ taking
$$S_i\lmto S_{j+r_1+\cdots+r_{i-1}}[r_i],\;\,\forall\,1\leq i\leq t.$$
Then with respect to the embedding $\Upsilon_{\bf r}$, an analogous result of the proposition
above holds. Hence, this provides an inductive way to understand primitive
elements $p_n^{(r)}$ in $\cH(\cA_r)$. In particular, take $t=1$ and $j=1$.
This gives an embedding
$$\Upsilon_{r,1}: \rep^0 C_1\lra \rep^0 C_r,\;\, S_1\lmto S_1[r].$$
For each partition $\lz=(\lz_1,\ldots,\lz_m)\vdash n$, set
$$I_\lz^{(r)}=\bigoplus_{i=1}^m S_1[\lz_i r]\in \rep^0 C_r.$$
  Then
\begin{equation} \label{prim-elt-cyclic-decomp}
p_n^{(r)}=\sum_{\lz \vdash n}\bigg(\prod_{s=1}^{\ell(\lz)-1}(1-q^s)\bigg)[I^{(r)}_\lz]
+\text{other terms}.
\end{equation}

As a conclusion of discussions above, we obtain the following description
of primitive elements in $\cH_v(C_r)$. Let $\bbF_q[T]$
denote the polynomial ring in one indeterminate $T$ over $\bbF_q$ and set
$$\bbA_q^1:=\bbA_{\bbF_q}^1=\spec (\bbF_q[T]).$$
It is well known that $\rep_{\bbF_q} C_r$ admits a decomposition
$$\rep_{\bbF_q} C_r=\rep^0_{\bbF_q} C_r\times \prod_{x\in \bbA_q^1\backslash\{0\}} \cT_x,$$
 where for each $x\in \bbA_q^1\backslash\{0\}$, $\cT_x$ is equivalent to $\rep^0_{\bbF_{q^d}} C_1$
with $d=\deg(x)$. In particular, for such an $x$ with $\deg(x)=d$, $\cT_x$ contains a
unique (up to isomorphism) simple object (called quasi-simple) $E_x$ of dimension vector
$d\dz$ (with $\End(E_x)\cong\bbF_{q^d}$) and indecomposable representations $E_x[n]$
of quasi-length $n$ for all $n\geq 1$. For each partition $\lz=(\lz_1,\ldots,\lz_m)$, set
$$I_\lz(x)=\bigoplus_{i=1}^m E_x[\lz_i]\in \add\cT_x.$$
Then by modifying the primitive element $p_n$ given in \eqref{prim-elt-Jordan},
we obtain a primitive element
$$p_n(x)=\sum_{\lz \vdash n}\bigg(\prod_{s=1}^{\ell(\lz)-1}(1-q^{ds})\bigg)[I_\lz(x)]$$
in $\cH_v(C_r)$ of dimension vector $dn\dz$. Consequently, we obtain the following
result; see \cite[Prop.~5.7]{Hen}.

\begin{Prop} \label{prim-elt-cyclic} For each $n\geq 1$, the set
$$\{p_n^{(r)}\}\cup \{p_m(x)\mid 0\not=x\in \bbA_q^1\;\text{ with }\; m\deg(x)=n\}$$
 forms a basis of $\cH_v(C_r)^\prim_{n\dz}$.
\end{Prop}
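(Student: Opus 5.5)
The plan is to use the block decomposition of $\rep_{\bbF_q}C_r$ into $\rep^0 C_r$ and the tubes $\cT_x$, $0\neq x\in\bbA_q^1$. Writing $\cH_x$ for $\cH(\add\cT_x)$ and $\cH^0=\cH^0_v(C_r)$, every representation splits uniquely as a direct sum of a nilpotent part and parts in the various $\cT_x$. There are no nonzero Hom or Ext between different blocks, so the Euler form vanishes between them and the Hall numbers factor. This gives a bialgebra isomorphism
$$\cH_v(C_r)\cong\cH^0\otimes\bigotimes_{x}\cH_x$$
(restricted tensor product), since the comultiplication of a direct sum of modules from distinct blocks also factors.

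Each factor is a connected graded Hopf algebra generated by primitives. The primitive elements of a tensor product of connected graded bialgebras are the direct sum of the primitives of the factors: for $x=\sum a\otimes b$ with both degrees positive, primitivity is forced to fail, by looking at the $(\deg a,\deg b)$ component of $\dt$. Hence
$$\cH_v(C_r)^\prim_{n\dz}=(\cH^0)^\prim_{n\dz}\oplus\bigoplus_{x}(\cH_x)^\prim_{n\dz}.$$

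Next I identify the summands. By the result quoted above (Hubery together with \cite[Thm~5.4]{Hen}), $(\cH^0)^\prim_{n\dz}=\bbC x_n=\bbC p_n^{(r)}$. For $x$ with $\deg x=d$, the category $\cT_x$ is equivalent to $\rep^0_{\bbF_{q^d}}C_1$ with $E_x\mapsto S$. Since $\udim E_x=d\dz$ and $\lr{d\dz,d\dz}=0$, the twist is trivial, so $\cH_x$ is isomorphic, up to this grading rescaling, to the classical Hall algebra over $\bbF_{q^d}$. There the degree-$m$ primitive space is one-dimensional, spanned by $p_m$ with $q$ replaced by $q^d$, and this corresponds to $p_m(x)$. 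Therefore $(\cH_x)^\prim_{n\dz}$ is $\bbC p_m(x)$ if $md=n$ and zero otherwise. Taking the union over all summands gives the claimed basis.

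The main obstacle is the tensor-product compatibility of the comultiplication. One must check that $F^{M\oplus N}_{X,Y}$, with $M,N$ in complementary blocks, decomposes as a sum over block-compatible $X=X'\oplus X''$, $Y=Y'\oplus Y''$ of products of the block Hall numbers. One must also check that the factors $a_Xa_Y/a_M$ and the powers of $v$ match. I would verify this by noting that submodules of $M\oplus N$ split along the blocks, because $\Hom$ between different blocks is zero, and that automorphism groups factor in the same way.
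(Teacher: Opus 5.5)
Your proposal is correct and follows the same route as the paper. Split $\rep_{\bbF_q}C_r=\rep^0_{\bbF_q}C_r\times\prod_{x\neq 0}\cT_x$, take $\bbC p_n^{(r)}$ on the nilpotent block from Hubery and \cite[Thm~5.4]{Hen}, and take the classical primitive $p_m$ (with $q$ replaced by $q^d$) on each $\cT_x\simeq\rep^0_{\bbF_{q^d}}C_1$; the paper leaves the assembly to \cite[Prop.~5.7]{Hen}, while you spell out the factorization of Hall numbers, automorphism groups and the trivial twist ($\dz$ is in the radical of the Euler form), together with the splitting of primitives in a tensor product of connected graded coalgebras, which works provided the ``both degrees positive'' argument uses the degree in each tensor factor separately rather than the total dimension vector.
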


\section{Regular primitive elements for tame hereditary algebras}

In this section we consider the algebra $A={\frak A}(Q,\sz;q)$ over
$\bbF_q$ associated with an acyclic tame quiver $Q$ with automorphism $\sz$, and
make a comparison between the dimension of the space of
primitive elements in $\cH_v(Q,\sz)$ and that of regular primitive
elements with a fixed dimension vector.

\medskip

Throughout this section, $Q=(Q_0,Q_1)$ always denotes an acyclic tame quiver
with automorphism $\sz$. Then $A={\frak A}(Q,\sz;q)$ is a tame hereditary
$\bbF_q$-algebra, and
$$\cF_{Q,\sz}=\{n\dz\mid n\geq 1\},$$
 where $\dz$ denotes the minimal positive imaginary root of ${\frak g}_{Q,\sz}$.

As in Section 3, let $\scrR_A$ be the full subcategory of $A\mod$
consisting of regular $A$-modules. It is well known from \cite{DR,R76}
(see also \cite{R84,GR92}) that the Auslander--Reiten quiver of $\scrR_A$
consists of infinitely many tubes. Moreover, if $\sz=\id$, then all the tubes in
$\scrR_A$ are indexed by the projective line $\bbP_{\bbF_q}^1$ of $\bbF_q$,
that is, the set of irreducible homogeneous polynomials in $\bbF_q[T_0,T_1]$
of two variables $T_0$ and $T_1$. For this reason, we use $\bbP_{\bbF_q,\sz}^1$
to index the set of all tubes in $\scrR_A$ for $A={\frak A}(Q,\sz;q)$ with
an arbitrary automorphism $\sz$. For each $x\in \bbP_{\bbF_q,\sz}^1$,
let $\cT_x$ denote the corresponding tube in $\scrR_A$ and let $r_x$ be its rank.
Then almost all tubes in $\scrR_A$ are homogeneous, i.e., those $\cT_x$ with $r_x=1$,
and there are at most $3$ non-homogeneous tubes. Moreover, each homogeneous
tube $\cT_x$ admits a unique simple object $E_x$ (i.e., the quasi-simple module
in $\cT_x$).

\begin{Rem} \label{homog-tube-sigma}
In fact, every tube in $A\mod$ can also be obtained by folding those
in $\ofq Q\mod$ via the Frobenius twist functor $\ofq Q\mod\ra \ofq Q\mod$
induced by $\sz$; see \cite[\S2.2]{DDPW} and \cite{DOP}. More precisely,
folding homogeneous tubes in $\ofq Q\mod$ always gives rise to homogeneous ones in
$A\mod$, while folding non-homogeneous tubes in $\ofq Q\mod$ give either
non-homogeneous or homogenous ones in $A\mod$. Furthermore, if a homogeneous tube $\cT_x$
in $A\mod$ is obtained by folding homogeneous tubes of $F$-period $p$ in
$\ofq Q\mod$, then its quasi-simple module $E_x$ satisfies
$$\udim E_x=p\dz\;\text{ and }\;\;\End(E_x)\cong\bbF_{q^p}.$$
If a homogeneous tube $\cT_x$ in $A\mod$ is obtained by folding
non-homogeneous tubes in $\ofq Q\mod$, then there are two positive integers
$d_x$ and $e_x$ with $d_x\mid e_x$ such that
$$\udim E_x=d_x\dz\;\text{ and }\;\, \End(E_x)\cong \bbF_{q^{e_x}}.$$
We remark that if $\sz=\id$, then each non-homogeneous tube in $\ofq Q\mod$
gives a non-homogeneous tube with the same rank in $A\mod$, and, moreover,
$d_x=e_x$ for all homogeneous tubes $\cT_x$.
\end{Rem}

\begin{Examples} (1) Let $(Q,\sz)$ be the quiver with automorphism given in
Example \ref{aff-A11} with $A={\frak A}(Q,\sz;q)$. Then there are three
tubes of rank $2$ in $\ofq Q\mod$, one of them is folded into a homogeneous
tube in $A\mod$, say $\cT_{x_0}$, while the other two are folded
into a homogeneous tube, say $\cT_{x_1}$. Moreover, their quasi-simple modules
$E_{x_0}$ and $E_{x_1}$ satisfy
$$\udim E_{x_0}=\dz,\;\End(E_{x_0})\cong \bbF_{q^2},\;
\udim E_{x_1}=2\dz,\;\text{ and }\;\, \End(E_{x_1})\cong \bbF_{q^4}.$$

(2) Let $Q$ be the quiver with automorphism $\sz$ which has $n=2 \ell$ vertices
($\ell\geq 2$):
 \begin{center}
\begin{pspicture}(-3.4,-.8)(10,0.9)
\put(-1.2,-0.1){$Q:$}
\psdot*[dotsize=3pt](.4,0) \psdot*[dotsize=3pt](1,.5)
\psdot*[dotsize=3pt](1,-.5) \psdot*[dotsize=3pt](2,.5)
\psdot*[dotsize=3pt](2,-.5) \psdot*[dotsize=3pt](4,0.5)
\psdot*[dotsize=3pt](4,-0.5) \psdot*[dotsize=3pt](5,0.5)
\psdot*[dotsize=3pt](5,-0.5) \psdot*[dotsize=3pt](5.6,0)
 \psline{->}(.4,0)(1,.5)
\psline{->}(.4,0)(1,-.5) \psline{->}(1,.5)(2,.5)
\psline{->}(1,-.5)(2,-.5) \psline{-}(2,.5)(2.5,.5)
\psline{-}(2,-.5)(2.5,-.5)
\put(1.1,-0.1){$\sz$}
\psline[linestyle=dotted,linewidth=1pt](2.6,.5)(3.4,.5)
\psline[linestyle=dotted,linewidth=1pt](2.6,-.5)(3.4,-.5)
\psline{->}(3.5,.5)(4,.5) \psline{->}(3.5,-.5)(4,-.5)
\psline{->}(4,.5)(5,.5) \psline{->}(4,-.5)(5,-.5)
\psline{->}(5,.5)(5.6,0) \psline{->}(5,-.5)(5.6,0)
\psline[linestyle=dashed,dash=1pt .8pt]{<->}(1,0.43)(1,-0.43)
\psline[linestyle=dashed,dash=1pt .8pt]{<->}(2,0.43)(2,-0.43)
\psline[linestyle=dashed,dash=1pt .8pt]{<->}(4,0.43)(4,-0.43)
\psline[linestyle=dashed,dash=1pt .8pt]{<->}(5,0.43)(5,-0.43)
%\uput[d](3.2,-.9){{\bf FIG.~1}\;($\widetilde A_{2\ell-1}$)}
\end{pspicture}
\end{center}
Then the underlying graph of $\ggz(Q,\sz)$ is of type ${\widetilde B}_\ell$.
It is known that there are two non-homogeneous tubes of rank $\ell$ in $\ofq Q\mod$,
which are folded into a unique non-homogeneous tube $\cT_{x_0}$ of rank $\ell$ in
$A\mod$ such that the sum of dimension vectors of non-isomorphic
quasi-simple modules in $\cT_{x_0}$ equals $2\dz$.

\end{Examples}

Each tube $\cT_x$ in $A\mod$ gives
an exact subcategory $\add\cT_x$ of $\scrR_A$ which defines a subalgebra
$\cH(\cT_x):=\cH(\add\cT_x)$ of $\cH(\scrR_A)$. Since there are no nonzero
morphisms and no nontrivial extensions between modules in distinct tubes, all the subalgebras
$\cH(\cT_x)$ are closed under the comultiplication $\dt_\scrR$ in $\cH(\scrR_A)$,
and thus, each $\cH(\cT_x)$ is a sub-bialgebra of $\cH(\scrR_A)$. Therefore,
$$\cH(\scrR_A)^{\prim}_{n\dz}=\bigoplus_{x\in \bbP_{\bbF_q,\sz}^1}
\cH(\cT_x)^{\prim}_{n\dz},\;\,\forall\, n\geq 1.$$

For each $x\in \bbP_{\bbF_q,\sz}^1$ with $r_x>1$ (i.e., $\cT_x$ is a
non-homogeneous tube), the sum of dimension vectors of $r_x$ non-isomorphic
quasi-simple modules in $\cT_x$ equals $d_x\dz$ for some positive
integer $d_x\geq 1$ (note that $d_x=1$ if $\sz=\id$), and there is an
equivalence
$$\Rep^0_{\bbF_{q^{d_x}}}C_{r_x}\cong \add\cT_x,$$
which induces an isomorphism of bialgebras
$$\cH_{v^{d_x}}^0(C_{r_x})\cong \cH(\cT_x).$$
Then for each $m\geq 1$, the primitive element $p_m^{(r_x)}$ in
$\cH_{v^{d_x}}^0(C_{r_x})$ defined in \eqref{prim-elt-cyc-r} gives rise to a
(normalized) primitive element $p_m(x)$ in $\cH(\cT_x)_{md_x\dz}$ (by replacing
$q$ by $q^{d_x}$) which has
the form
$$p_m(x)=\sum_{i=1}^{r_x} [M_i]+\text{other terms},$$
where $M_1,\ldots,M_{r_x}$ are all pairwise non-isomorphic indecomposable
modules in $\cT_x$ with dimension vector $md_x\dz$.

Now let $\cT_x$ be a homogeneous tube in $A\mod$ with
$$\udim E_x=d_x\dz\;\text{ and }\;\, \End(E_x)\cong \bbF_{q^{e_x}},$$
 where $d_x,e_x\geq 1$ satisfy $d_x\mid e_x$. Note that $d_x=e_x$ unless
 $\cT_x$ is obtained by folding non-homogeneous tubes in $\ofq Q\mod$;
see the remark above. In this case, we have an equivalence
$$\Rep^0_{\bbF_{q^{e_x}}}C_1\cong \add\cT_x,$$
as well as a bialgebra isomorphism
$$\cH_{v^{e_x}}^0(C_1)\cong \cH(\cT_x).$$
 Thus, for each $m\geq 1$, the primitive element $p_m$ in
$\cH_{v^{e_x}}^0(C_1)$ defined in \eqref{prim-elt-Jordan}
 gives a primitive element
$$p_m(x)=\sum_{\lz \vdash m}\prod_{s=1}^{\ell(\lz)-1}(1-q^{se_x})
[I_\lz(x)]\in \cH(\cT_x)_{md_x\dz}^\prim,$$
where for a partition $\lz=(\lz_1,\lz_2,\ldots,\lz_t) \vdash m$ with
$t=\ell(\lz)$, $I_\lz(x)=\oplus_{i=1}^t E_x[\lz_i]$ with $E_x[\lz_i]$
the indecomposable module in $\cT_x$ of quasi-length $\lz_i$ (or, equivalently,
of dimension vector $\lz_id_x\dz$).

For notational convenience, for each $x\in \bbP_{\bbF_q,\sz}^1$, we call
$d_x$ the degree of $x$, denoted by $\deg(x)=d_x$. If $\cT_x$ is non-homogeneous,
we set $e_x=d_x$.

As an analogy of Proposition \ref{prim-elt-cyclic}, we obtain the following result;
see also \cite[Sect.~6.3]{Hen}.

\begin{Prop} \label{prim-elt-reg-tame} For each $n\geq 1$, the set
$$\{p_m(x)\mid x\in \bbP_{\bbF_q,\sz}^1\;\text{ with }\; m \deg(x)=n\}$$
 forms a basis of $\cH_v(\scrR_A)^\prim_{n\dz}$. In particular, we have
$$\dim \cH(\scrR_A)_{n\dz}^\prim=\sharp\{x\in \bbP_{\bbF_q,\sz}^1\mid \deg(x)|n\}.$$
\end{Prop}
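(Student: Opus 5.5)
The plan is to reduce everything to the tube decomposition already established. Just before the statement it is shown that
$$\cH(\scrR_A)^{\prim}_{n\dz}=\bigoplus_{x\in \bbP_{\bbF_q,\sz}^1}\cH(\cT_x)^{\prim}_{n\dz}.$$
It therefore suffices to prove two things for each tube $\cT_x$. First, $\cH(\cT_x)^{\prim}_{n\dz}=0$ unless $\deg(x)\mid n$. Second, when $n=m\deg(x)$, the space $\cH(\cT_x)^{\prim}_{n\dz}$ is one-dimensional and spanned by $p_m(x)$. The basis statement and the dimension formula then follow at once, since distinct $x$ contribute distinct direct summands.

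For the vanishing claim, every module in $\add\cT_x$ has dimension vector a nonnegative combination of dimension vectors of quasi-simples of $\cT_x$. If $\cT_x$ is homogeneous, its only quasi-simple is $E_x$, so every dimension vector lies in $\bbN d_x\dz$. Hence $\cH(\cT_x)_{n\dz}=0$ unless $d_x\mid n$.

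The non-homogeneous case is less immediate. Under the equivalence $\Rep^0_{\bbF_{q^{d_x}}}C_{r_x}\cong\add\cT_x$, the grading of $\cH(\cT_x)$ by $\bbN I$ is the pushforward of the $\bbN(\bbZ/r_x)$-grading of $\cH^0(C_{r_x})$. The homogeneous components $\cH(\cT_x)_{n\dz}$ may then pick up several cyclic-quiver degrees. I will argue that primitivity is checked degree by degree on the cyclic-quiver side. This holds because $\dt$ is graded and the linear map on dimension vectors is additive, so a primitive element of $\cH(\cT_x)_{n\dz}$ decomposes into primitive homogeneous pieces of $\cH^0(C_{r_x})$. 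The primitive elements of $\cH^0_{v^{d_x}}(C_{r_x})$ live only in degrees that are simples $S_i$ or multiples $m\dz_{C_{r_x}}$, by the Sevenhant--Van den Bergh support statement applied to the cyclic quiver, which is recalled in Section 4. The simples map to quasi-simple dimension vectors, which are real roots and not multiples of $\dz$ (here $r_x>1$). The multiple $m\dz_{C_{r_x}}$ maps to $md_x\dz$. So only $n=md_x$ survives.

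For the one-dimensionality claim, I use the bialgebra isomorphisms $\cH(\cT_x)\cong\cH^0_{v^{d_x}}(C_{r_x})$, respectively $\cH^0_{v^{e_x}}(C_1)$. On the cyclic side, the space of primitive elements of degree $m\dz$ is $\bbC x_m=\bbC p_m^{(r_x)}$ (from \cite[Thm~5.4]{Hen}). In the homogeneous case it is $\bbC p_m$, since $\dim\cH^0_v(C_1)^\prim_m=1$ with $v$ replaced by $v^{e_x}$. Transporting these along the isomorphisms gives exactly $p_m(x)$, which is nonzero. One point to check is that the Ringel twist $v^{\lr{-,-}}$ on $\cH(\cT_x)$ agrees with the twist on the cyclic side at $v^{d_x}$ or $v^{e_x}$. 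This is a standard Euler form comparison for tubes, and in any case primitivity is insensitive to rescaling basis elements homogeneously.

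I expect the main obstacle to be the non-homogeneous degree bookkeeping in the vanishing claim. There I must make sure that no primitive element of $\cH^0(C_{r_x})$ of non-$\dz$-multiple degree maps into degree $n\dz$ for $A$; the argument above handles this via the support of primitives. With both claims in hand, summing over $x$ yields the stated basis and $\dim\cH(\scrR_A)^\prim_{n\dz}=\sharp\{x\mid\deg(x)\mid n\}$.
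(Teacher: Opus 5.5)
Your proposal is correct and is essentially the argument the paper leaves implicit in the discussion before the statement. You split $\cH(\scrR_A)^{\prim}_{n\dz}$ over the tubes $\cT_x$ and transport the one-dimensional primitive spaces of $\cH^0_{v^{d_x}}(C_{r_x})$, resp.\ $\cH^0_{v^{e_x}}(C_1)$, along the bialgebra isomorphisms with $\cH(\cT_x)$, which yields $p_m(x)$. Your extra degree bookkeeping for non-homogeneous tubes is sound: the support statement you need is the one from Section~2, and it applies because $\cH^0_v(C_r)$ is a sub-bialgebra of $\cH_v(C_r)$. It can be shortcut, though, since the quasi-simple dimension vectors of a tube are linearly independent, so $\cH(\cT_x)_{n\dz}$ is the single cyclic-quiver degree $(n/d_x)(1,\ldots,1)$ when $d_x\mid n$ and is zero otherwise.
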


For each $n\geq 1$, let $I_{Q,\sz}(n\dz,q)$ denote the number of isoclasses of
indecomposable $A$-modules of dimension vector $n\dz$, and let $\mult\,n\dz$ be
the multiplicity of $n\dz$ in $\fkg_{Q,\sz}$, i.e.,
$\mult\,n\dz=\dim ({\fkg}_{Q,\sz})_{n\dz}$.

The following result is given in \cite{HX,ZZG} for $\sz=\id$, and in \cite{Obul, DOP}
for the remaining cases.

\begin{Lem} \label{codim=1} For each $n\geq 1$,
$$\dim\cH_v(Q,\sz)_{n\dz}^\prim=I_{Q,\sz}(n\dz,q)-{\rm mult}\,n\dz
=\sum_{s\mid n}\vphi_s(q),$$
where $\vphi_s(q)$ denotes the number of monic irreducible polynomials
of degree $s$ over $\bbF_q$.
\end{Lem}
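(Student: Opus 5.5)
The plan is to compare two expressions for the graded dimension of $\cH_v(Q,\sz)$: one from the Krull--Schmidt count of isoclasses, and one from a PBW-type description coming from the Borcherds algebra picture. This comparison gives the first equality. The second equality then reduces to counting indecomposables of dimension vector $n\dz$ tube by tube.

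\emph{Step 1 (counting isoclasses).} By Krull--Schmidt, isoclasses of $A$-modules are finite multisets of indecomposables. Hence
$$\sum_{\bfd}\dim\cH_v(Q,\sz)_\bfd\,X^\bfd=\prod_{\az\in\Phi^+(Q,\sz)}(1-X^\az)^{-I_{Q,\sz}(\az,q)}.$$
Here $I_{Q,\sz}(\az,q)$ is the number of indecomposables of dimension vector $\az$, and only positive roots occur, by Kac's theorem in its species/folded form. For real roots we have $I_{Q,\sz}(\az,q)=1=\mult\,\az$.

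\emph{Step 2 (the Borcherds algebra side).} By Sevenhant--Van den Bergh (recalled in the introduction), $\cH_v(Q,\sz)$ is the positive part of a quantized generalized Kac--Moody algebra $\fkg'$. Its simple roots are the $i\in I$, together with $\dim\cH_v(Q,\sz)^\prim_{n\dz}$ extra imaginary simple roots of degree $n\dz$ for each $n\geq 1$ (recall that $\cF_{Q,\sz}=\{n\dz\}$).
\begin{itemize}
\item Since $(n\dz,i)=0$ for all $i$ and $(n\dz,m\dz)=0$, the Borcherds--Serre relations force the extra generators to commute with all $e_i$ and with each other.
\item Hence $\fkg'^{+}=\fkg^{+}_{Q,\sz}\oplus\mathfrak a$, where $\mathfrak a$ is abelian with $\dim\mathfrak a_{n\dz}=\dim\cH_v(Q,\sz)^\prim_{n\dz}$.
\item A PBW argument for the quantum group, or a specialization/character argument, then gives graded dimension $\prod_\az(1-X^\az)^{-\mult_{\fkg'}\az}$, where $\mult_{\fkg'}n\dz=\mult\,n\dz+\dim\cH^\prim_{n\dz}$ and the real multiplicities are unchanged.
\end{itemize}
Comparing with Step 1 by induction on $\bfd$ yields $I_{Q,\sz}(n\dz,q)=\mult\,n\dz+\dim\cH_v(Q,\sz)^\prim_{n\dz}$. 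This is the first equality.

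\emph{Step 3 (counting indecomposables).} Indecomposables of dimension vector $n\dz$ are regular, so we count them tube by tube using the description preceding Proposition \ref{prim-elt-reg-tame}.
\begin{itemize}
\item A homogeneous tube $\cT_x$ contributes exactly one indecomposable if $\deg(x)\mid n$, and none otherwise.
\item A non-homogeneous tube of rank $r_x$ contributes $r_x$ indecomposables when $d_x\mid n$.
\end{itemize}
For $\sz=\id$, the points of $\bbP^1_{\bbF_q}$ of degree dividing $n$ number $1+\sum_{s\mid n}\vphi_s(q)$ (the extra point is $\infty$). Together with the standard identity $\sum_x(r_x-1)=|Q_0|-2$, this gives
$$I_Q(n\dz,q)=\sum_{s\mid n}\vphi_s(q)+|Q_0|-1=\sum_{s\mid n}\vphi_s(q)+\mult\,n\dz,$$
using $\mult\,n\dz=|Q_0|-1$ for untwisted affine types.

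The main obstacle is the twisted case $\sz\neq\id$. Here $\mult\,n\dz$ may depend on $n$ (as in type $A^{(2)}_2$), and tubes fold in the irregular ways described in Remark \ref{homog-tube-sigma}. I would handle this by counting $F$-stable tubes over $\ofq$ via the Frobenius twist, and checking case by case against Kac's table of twisted root multiplicities. The most likely approach is to verify that for each $n$, the non-homogeneous tubes of $\ofq Q\mod$ that are folded into homogeneous ones exactly compensate the drop of $\mult\,n\dz$ for $n$ not divisible by the twisting order.
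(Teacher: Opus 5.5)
The paper gives no argument for this lemma; it quotes it from \cite{HX,ZZG} for $\sz=\id$ and from \cite{Obul,DOP} otherwise. Your Steps 1--2 are a sound derivation of the first equality, uniformly in $\sz$. This grants the Sevenhant--Van den Bergh/Deng--Xiao presentation and the fact that $\cC_v(Q,\sz)$ has the graded dimension of $U(\fkg^+_{Q,\sz})$. Since $n\dz$ lies in the radical of $(-,-)$, the extra generators are central, so $\cH_v(Q,\sz)\cong\cC_v(Q,\sz)\otimes\bbC[\text{extra generators}]$. Comparing exponents with the Krull--Schmidt product then gives $I_{Q,\sz}(n\dz,q)=\mult\,n\dz+\dim\cH_v(Q,\sz)^\prim_{n\dz}$. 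Step 3 is also complete for $\sz=\id$. You tacitly use there that every non-homogeneous tube has degree $1$, which holds because its quasi-simples are exceptional.

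The gap is the second equality for $\sz\neq\id$, which you only announce; two facts are needed there. (a) $\sharp\{x\in\bbP^1_{\bbF_q,\sz}\mid \deg(x)\mid n\}=1+\sum_{s\mid n}\vphi_s(q)$ and $I_{Q,\sz}(n\dz,q)=\sum_{\deg(x)\mid n}r_x$. For this you must show that $F_{Q,\sz;q}$ acts on the parameter line $\bbP^1(\ofq)$ of tubes of $\ofq Q\mod$ as $\tau\circ{\rm Fr}_q$, with $\tau$ a projective transformation induced by $\sz$. By Lang's theorem this is conjugate to ${\rm Fr}_q$, so its orbits of size $s$ are counted like closed points of degree $s$ of $\bbP^1_{\bbF_q}$. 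You also need that an $F$-orbit of $p$ tubes, homogeneous or not, folds to a single tube $\cT_x$ with $\deg(x)=p$, because its quasi-simples add up to the fold of $p$ times the minimal imaginary root of $Q$, i.e.\ to $p\dz$. Its rank $r_x$ is the number of $F$-orbits of quasi-simples in those $p$ tubes. (b) The identity $\mult\,n\dz=1+\sum_{\deg(x)\mid n}(r_x-1)$, which must be checked case by case against Kac's table of twisted multiplicities. The paper, too, only asserts it ``by a case-by-case check''. Then $I_{Q,\sz}(n\dz,q)-\mult\,n\dz=\sharp\{x\mid\deg(x)\mid n\}-1=\sum_{s\mid n}\vphi_s(q)$. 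Without (a) and (b) the twisted case is not proved.

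Your heuristic for the twisted case is also off. In type $A^{(2)}_2$ (Example \ref{aff-A11}) one has $\mult\,n\dz=1$ for all $n$; $A^{(2)}_{2\ell}$ is exactly the twisted family with constant imaginary multiplicities. There the three rank-$2$ tubes over $\ofq$ (one $F$-stable, two interchanged) fold to homogeneous tubes of degrees $1$ and $2$. These simply occupy the places of a degree-$1$ and a degree-$2$ point of $\bbP^1_{\bbF_q}$, so nothing has to be compensated. The dependence on $n$ occurs for instance in $D^{(2)}_{\ell+1}$ (the $\widetilde B_\ell$ example), where $\mult\,n\dz$ is $1$ for odd $n$ and $\ell$ for even $n$. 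It is matched there by the single non-homogeneous tube of rank $\ell$ and degree $2$, which contributes $\ell$ indecomposables of dimension vector $n\dz$ exactly when $2\mid n$. It is not matched by tubes folding into homogeneous ones.
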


Further, on the one hand, we have
$$I_{Q,\sz}(n\dz,q)=\sum_{x\in \bbP_{\bbF_q,\,\sz}^1,\,\deg(x)|n}r_x.$$
On the other hand, a case-by-case check shows that
$$\mult\,n\dz=1+\sum_{x\in \bbP_{\bbF_q,\,\sz}^1,\,\deg(x)|n}(r_x-1).$$
We conclude from the equalities above that
$$\aligned
\dim\cH_v(Q,\sz)_{n\dz}^\prim&=I_{Q,\sz}(n\dz,q)-{\rm mult}\,n\dz\\
&=I_{Q,\sz}(n\dz,q)-\big(1+\sum_{x\in \bbP_{\bbF_q,\,\sz}^1,\,\deg(x)|n}(r_x-1)\big)\\
&=\sharp\{x\in \bbP_{\bbF_q,\sz}^1\mid \deg(x)|n\}-1\\
&=\dim \cH(\scrR_A)_{n\dz}^\prim-1,
\endaligned$$
 that is, we obtain the following result; see \cite[6.4]{Hen} for
the quiver case.

\begin{Prop} \label{codim=1} For each $n\geq 1$,
$$\dim\cH(\scrR_A)_{n\dz}^\prim=\dim \cH_v(Q,\sz)_{n\dz}^\prim+1.$$
\end{Prop}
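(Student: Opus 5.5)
The plan is to combine the two dimension counts already in hand and verify that the remaining terms cancel. By Proposition \ref{prim-elt-reg-tame},
$$\dim\cH(\scrR_A)_{n\dz}^\prim=\sharp\{x\in \bbP_{\bbF_q,\sz}^1\mid \deg(x)\mid n\},$$
while Lemma \ref{codim=1} gives
$$\dim\cH_v(Q,\sz)_{n\dz}^\prim=I_{Q,\sz}(n\dz,q)-\mult\,n\dz .$$
The statement therefore reduces to the identity
$$I_{Q,\sz}(n\dz,q)-\mult\,n\dz=\sharp\{x\mid \deg(x)\mid n\}-1,$$
which we obtain from two auxiliary formulas, one for $I_{Q,\sz}(n\dz,q)$ and one for $\mult\,n\dz$.

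\emph{Counting indecomposables.} An indecomposable module of dimension vector $n\dz$ is regular, because preprojective and preinjective indecomposables have real root dimension vectors. It therefore lies in a unique tube $\cT_x$. If $\cT_x$ is homogeneous with $\udim E_x=d_x\dz$, it contains exactly one indecomposable of dimension vector $n\dz$, namely $E_x[n/d_x]$, and only when $d_x\mid n$. If $\cT_x$ is non-homogeneous, the equivalence $\add\cT_x\cong\Rep^0_{\bbF_{q^{d_x}}}C_{r_x}$ shows that the indecomposables of dimension vector $md_x\dz$ are the $S_i[mr_x]$ for $i=1,\ldots,r_x$. Hence the tube contributes $r_x$ indecomposables when $d_x\mid n$ and none otherwise. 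Adding over tubes gives
$$I_{Q,\sz}(n\dz,q)=\sum_{x,\ \deg(x)\mid n} r_x .$$

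\emph{Imaginary multiplicity.} Next we establish
$$\mult\,n\dz=1+\sum_{x,\ \deg(x)\mid n}(r_x-1).$$
For $\sz=\id$, $\mult\,n\dz=|Q_0|-1$, and $|Q_0|-2=\sum(r_x-1)$ over the exceptional tubes, all of degree $1$, which is the standard rank formula for tame quivers. For nontrivial $\sz$ we check each twisted affine type against Kac's tables \cite{Kac90}, where $\mult\,n\dz$ depends on $n$ modulo the twisting order, together with the folding of tubes in Remark \ref{homog-tube-sigma} and the explicit $d_x$, $r_x$ in each case (for instance Example \ref{aff-A11}). Only non-homogeneous tubes contribute to the sum, and each does so exactly when $d_x\mid n$.

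Subtracting the two formulas leaves $\sum_{\deg(x)\mid n}1-1$, which is the required identity. The main obstacle is the case-by-case verification of the multiplicity formula in the twisted types. There one must match, for each affine type, the degrees $d_x$ of the folded non-homogeneous tubes with the periodic pattern of $\mult\,n\dz$. The first thing I would try is to use the fact that $\mult\,n\dz$ equals the number of $\sz$-orbits on the imaginary root space of the unfolded affine algebra fixed at level $n$, and to match these orbits with the tubes.
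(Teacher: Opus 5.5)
Your proposal is correct and is essentially the paper's own argument: combine Proposition~\ref{prim-elt-reg-tame} with the lemma $\dim\cH_v(Q,\sz)^\prim_{n\dz}=I_{Q,\sz}(n\dz,q)-\mult\,n\dz$, then use $I_{Q,\sz}(n\dz,q)=\sum_{\deg(x)\mid n}r_x$ and $\mult\,n\dz=1+\sum_{\deg(x)\mid n}(r_x-1)$ and subtract; the multiplicity formula is left, as in the paper, to a case-by-case check. One correction to your description of that check: foldings need not give twisted types (e.g.\ $\widetilde E_6$ with its order-$3$ rotation gives $G_2^{(1)}$, with $\mult\,n\dz=2$ for all $n$, and its two rank-$3$ tubes fold into homogeneous tubes with $e_x=3$), so the untwisted non-simply-laced cases must be checked as well.
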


\bigskip

\section{The proof of Theorem \ref{Main-thm-1}}

This section is mainly devoted to the proof of Theorem \ref{Main-thm-1}. We keep all the
notations in previous sections which will be used without explanation throughout.

\medskip

The following result can be deduced from \cite[Thm~6.9]{Hen}.
However, we provide a direct proof here.

\begin{Lem} \label{key-lemma} Let $C_r$ be the cyclic quiver with $r$ vertices
and for each $n\geq 1$, set
$$\one_{n\dz}^{(r)}=\sum_{{[M],\,M\in \Rep^0_{\bbF_q}C_r}\atop{\udim M=n\dz}}[M]\in \cH^0(C_r).$$
Then
$$\big\{p_n^{(r)}, \one_{n\dz}^{(r)}\big\}
=\sum_{\lz \vdash n}\frac{\prod_{s=1}^{\ell(\lz)-1}(1-q^s)}{a_\lz(q)}.$$
\end{Lem}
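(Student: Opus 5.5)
The plan is to realize $\cH^0(C_r)$ inside the Ringel--Hall algebra of a suitable acyclic tame quiver, and then to compare two linear functionals that both vanish on the genuine primitive elements. For $r=1$ there is nothing to prove: $\one^{(1)}_{n\dz}=\sum_{\lz\vdash n}[I_\lz]$, so $\{p_n,\one^{(1)}_{n}\}=\sum_\lz\prod_{s=1}^{\ell(\lz)-1}(1-q^s)/a_\lz(q)$ follows from \eqref{prim-elt-Jordan}. So assume $r>1$.

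\emph{Setup.} Let $Q$ be an acyclic quiver of type $\wt A$ with $\sz=\id$ and $A=\bbF_qQ$, oriented so that:
\begin{itemize}
\item $\scrR_A$ has one non-homogeneous tube $\cT_{x_0}$ of rank $r$ (for example $r$ arrows one way and one arrow the other way around the cycle);
\item all other tubes $\cT_y$ are homogeneous.
\end{itemize}
Then $\add\cT_{x_0}\simeq\Rep^0_{\bbF_q}C_r$, $\one^{(r)}_{n\dz}$ corresponds to the sum of the $[M]$ with $M\in\add\cT_{x_0}$ and $\udim M=n\dz$, and $p_n^{(r)}$ corresponds to $p_n(x_0)$. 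Since different tubes are orthogonal, $\{p_n(x_0),\one^\reg_{n\dz}\}=\{p_n^{(r)},\one^{(r)}_{n\dz}\}$. So the lemma reduces to computing $\bbI^\reg_{n\dz}(p_n(x_0))$.

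\emph{Two functionals.} First, Proposition \ref{inclusion-reg-prim} gives $\cH(A)^\prim_{n\dz}\subseteq\Ker\bbI^\reg_{n\dz}$. Second, choose an orthogonal exceptional pair $(E_1,E_2)$ in $A\mod$ with $\udim E_1+\udim E_2=\dz$, so that $\scrE=\scrE(E_1,E_2)$ is equivalent to Kronecker modules. It should be chosen so that:
\begin{itemize}
\item every homogeneous tube $\cT_y$ lies entirely in $\scrE$;
\item $\scrE\cap\add\cT_{x_0}$ is the image of the embedding $\Upsilon_{r,1}$, i.e. $\add\{S_1[mr]\}$ (after relabelling the vertices).
\end{itemize}
Corollary \ref{kernel-OEP} then gives a second functional $g=\{-,\one^\scrE_{n\dz}\}$ vanishing on $\cH(A)^\prim_{n\dz}$. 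Since primitive elements are supported on regular modules, we may restrict $g$ to $\cH(\scrR_A)^\prim_{n\dz}$, where only the regular part of $\one^\scrE_{n\dz}$ matters.

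\emph{Proportionality.} By Proposition \ref{codim=1} (the dimension statement), $\cH(A)^\prim_{n\dz}$ has codimension $1$ in $\cH(\scrR_A)^\prim_{n\dz}$. Hence $f:=\bbI^\reg_{n\dz}$ and $g$, both vanishing on this hyperplane, are proportional. On a homogeneous tube $\cT_y$ they coincide, because the whole tube lies in $\scrE$. On $x_0$, by \eqref{prim-elt-cyclic-decomp} and because $\Upsilon_{r,1}$ is fully faithful (so $a_{I_\lz^{(r)}}=a_\lz(q)$), we get
$$g(p_n(x_0))=\sum_{\lz\vdash n}\frac{\prod_{s=1}^{\ell(\lz)-1}(1-q^s)}{a_\lz(q)}.$$
So as soon as $f$ and $g$ agree and are nonzero on some basis vector $p_m(y)$ with $\cT_y$ homogeneous, proportionality forces $f=g$, and in particular $f(p_n(x_0))$ equals the claimed sum.

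\emph{Main obstacles.} I expect two delicate points.
\begin{itemize}
\item Nonvanishing. We need $f(p_m(y))\neq0$ for some homogeneous $y$ with $m\deg(y)=n$. If all these values vanished, $f$ would be determined by its value on $p_n(x_0)$ alone, and then I would compare it with a second realization (swapping the roles of the tubes, or using a different exceptional pair) to reach a contradiction. Alternatively, I would verify nonvanishing directly in small degree via the explicit formula for $a_\lz(q)$ in \eqref{auto-group-partition}.
\item Existence of $(E_1,E_2)$. I would construct the pair explicitly: take $E_2$ simple projective and $E_1$ the corresponding module of dimension vector $\dz-\udim E_2$, sitting at the mouth of $\cT_{x_0}$. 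Then I would check that this realizes the standard Kronecker embedding into $\wt A$, whose image meets $\cT_{x_0}$ exactly in $\add\{S_1[mr]\}$.
\end{itemize}
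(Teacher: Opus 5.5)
Your argument follows the paper's proof step for step. Your quiver with $r$ arrows one way and one arrow the other is the paper's $\wh C_{r+1}$, and the rest matches as well: the comparison of $\bbI^\reg_{n\dz}$ with $g=\{-,\one^\scrE_{n\dz}\}$ via the codimension-one statement, the choice of $E_2$ simple projective with $\udim E_1=\dz-\udim E_2$, and the evaluation of $p_n(x_0)_\scrE$ through \eqref{prim-elt-cyclic-decomp}. As written, however, the argument leaves one necessary step unproved, and one of your claims is false.

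\textbf{Missing step: nonvanishing.} You leave this open, and your fallbacks do not cover general $n$. Checking small $n$ via \eqref{auto-group-partition} says nothing for all $n$, and ``a second realization'' is not an argument. Without this step you only know the two functionals are proportional. If they vanished on every homogeneous $p_m(y)$, nothing would pin down the constant, and $\bbI^\reg_{n\dz}(p_n(x_0))$ would be undetermined. The paper closes this in one line by taking $m=1$:
\begin{itemize}
\item The homogeneous tubes are indexed by $\bbP^1_{\bbF_q}$ minus the single rational point $x_0$, so for every $n\geq 1$ there is a homogeneous $\cT_y$ with $\deg(y)=n$.
\item Then $p_1(y)=[E_y]$ with $\Aut(E_y)\cong\bbF_{q^n}^\times$.
\item Since $E_y\in\scrE$, this gives $\bbI^\reg_{n\dz}(p_1(y))=g(p_1(y))=1/(q^n-1)\neq 0$, which forces $\bbI^\reg_{n\dz}=g$.
\end{itemize}

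\textbf{False claim about $E_1$.} $E_1$ is not at the mouth of $\cT_{x_0}$. $E_2$ is projective of defect $-1$ and $\udim E_1+\udim E_2=\dz$, so $E_1$ has defect $+1$ and is preinjective. This is also what makes $\Ext^1(E_1,E_2)$ two-dimensional, so that $\scrE$ is of Kronecker type. In the labelling of $\wh C_{r+1}$, $E_1$ is the module supported on $1\to 2\to\cdots\to r$ with identity maps. The quasi-simple of the rank-one tube $\scrE\cap\cT_{x_0}$ is instead the extension $E_\infty$ of $E_1$ by $E_2$. It has dimension vector $\dz$ and socle $\wh S_0\oplus\wh S_r$, and corresponds to $S_1[r]$. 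With this correction, your identification of $\scrE\cap\add\cT_{x_0}$ with the image of $\Upsilon_{r,1}$ is right.
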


\medskip

\begin{proof} If $r=1$, this follows directly from the definition
of $p_n^{(1)}$ and $\one_{n\dz}^{(1)}$.

Now suppose $r>1$ and consider the following quiver $\widehat C_{r+1}$
\begin{center}
\begin{pspicture}(-3,-.6)(3.6,1.6)
\psset{linewidth=0.6pt, arrowsize=3.8pt}
\psset{xunit=.8cm,yunit=.7cm}
\psdot*(-3,0) \psdot*(-1.7,0) \psdot*(-.4,0) \psdot*(2.3,0)
\psdot*(3.6,0)\psdot*(.3,1.2) \uput[u](.3,1.2){$_{0}$}
\uput[d](-3,0){$_1$} \uput[d](-1.7,0){$_2$} \uput[d](-.4,0){$_3$}
\uput[d](2.3,0){$_{r-1}$} \uput[d](3.6,0){$_{r}$}
\psline{->}(-3,0)(-1.7,0) \psline{->}(-1.7,0)(-.4,0)
\psline(-.4,0)(0,0)
\psline[linestyle=dotted,linewidth=1pt](0,0)(1.4,0)\psline{->}(1.4,0)(2.3,0)
\psline{->}(2.3,0)(3.6,0) \psline{->}(3.6,0)(.3,1.2)
\psline{->}(-3,0)(.3,1.2)
\end{pspicture}
\end{center}
with vertex set $\wh I:=\{0,1,\ldots,r\}$. Then $A=\bbF_q\wh C_{r+1}$ is a finite
dimensional tame hereditary algebra. To avoid confusion, for each $i\in\wh I$,
let $\wh S_i$ denote the corresponding simple $A$-module and let $\wh\dz$ be the
associated minimal positive imaginary root.

On the one hand, the full subcategory $\scrR_A$ consisting of all regular $A$-modules
admits infinitely many tubes $\cT_x$, $x\in \bbP_{\bbF_q}^1$, one of which is
a tube of rank $r$, denoted by $\cT_\infty$, and all the others are homogeneous.
By Proposition \ref{prim-elt-reg-tame}, for each $n\geq 1$, we obtain a basis
$$\{p_m(x)\mid x\in \bbP_{\bbF_q}^1\;\text{ with }\; m \deg(x)=n\}$$
 of $\cH_v(\scrR_A)^\prim_{n\wh\dz}$, where $p_m(x)$ is the normalized
primitive element in $\cH(\cT_x)$. Further, by Proposition \ref{inclusion-reg-prim},
the linear function
$$\bbI^\reg_{n\wh\dz}: \cH(\scrR_A)^{\prim}_{n\dz}\lra \bbC,\;
z\lmto \{z,\one_{n\wh\dz}^\reg\}$$
 satisfies that
 $$\cH(A)^{\prim}_{n\dz}\subseteq \Ker \bbI^\reg_{n\wh\dz}.$$
Since for each $n\geq 1$, there exists $x\in \bbP_{\bbF_q}^1$ with $\deg(x)=n$
such that $\cT_x$ is a homogeneous tube, it follows that
$$\bbI^\reg_{n\wh\dz}(p_1(x))=\{p_1(x), \one_{n\wh\dz}^\reg\}=\frac{1}{q^n-1}\not=0.$$
 Hence, $\bbI^\reg_{n\wh\dz}\not=0$. This together with
Proposition \ref{codim=1} implies that
 $$\cH(A)^{\prim}_{n\dz}=\Ker \bbI^\reg_{n\wh\dz},$$
that is, Theorem \ref{Main-thm-1} holds for the quiver $\wh C_{r+1}$.

On the other hand, let $E_1$ be the indecomposable representation of
$\wh C_{r+1}$ with $\udim E_1=\wh\dz-\udim \wh S_0$, and set $E_2=\wh S_0$.
Then $(E_1,E_2)$ is an orthogonal exceptional pair in $A\mod$ such that the full
subcategory $\scrE=\scrE(E_1,E_2)$ contains all homogeneous tubes $\cT_x$, and that
$\scrE\cap\cT_\infty$ becomes a homogeneous tube whose quasi-simple
module $E_\infty$ is the indecomposable representation of $\wh C_{r+1}$ with
$\udim E_\infty=\wh\dz$ and ${\rm soc} E_\infty=\wh S_0\oplus \wh S_r$.
As stated in Section 3, $\scrE$ gives rise to a linear function
$$\bbI^\scrE_{n\wh\dz}: \cH(\scrR_A)^{\prim}_{n\dz}\lra \bbC,\;
z\lmto \{z,\one_{n\wh\dz}^\scrE\},$$
 which is nonzero by a similar argument as in the proof of
$\bbI^\reg_{n\wh\dz}\not=0$. Combining this with Corollary \ref{kernel-OEP} shows that
$$\cH(A)^{\prim}_{n\dz}=\Ker \bbI^\scrE_{n\wh\dz}.$$

 Since for each homogeneous tube $\cT_x$ with $\deg(x)=1$, we have
$$\bbI^\reg_{n\wh\dz}(p_n(x))=\bbI^\scrE_{n\wh\dz}(p_n(x)),$$
 we conclude that $\bbI^\reg_{n\wh\dz}=\bbI^\scrE_{n\wh\dz}$ as linear functions.
This together with \eqref{prim-elt-cyclic-decomp} implies particularly that
$$\{p_n(\infty),\one_{n\wh\dz}^\reg\}=\{p_n(\infty),\one_{n\wh\dz}^\scrE\}
=\{p_n(\infty)_{\scrE},\one_{n\wh\dz}^\scrE\}
=\sum_{\lz \vdash n}\frac{\prod_{s=1}^{\ell(\lz)-1}(1-q^s)}{a_\lz(q)}.$$

 Clearly, there is an embedding from $\rep_{\bbF_q}^0 C_r$ into $A\mod$
 via taking
$$S_i\lmto \wh S_i,\;\forall\, 2\leq i\leq r,\;S_1\lmto \wh S_{1,0},$$
 where $\wh S_{1,0}$ is the indecomposable $A$-module of dimension $2$ with
 top $\wh S_1$ and socle $\wh S_0$. Moreover, via this embedding,
$\rep_{\bbF_q}^0 C_r$ is identified with the full subcategory $\add \cT_\infty$ of $A\mod$.
Thus, we have an algebra isomorphism
$$\cH^0(C_r)\stackrel{\cong}{\lra}\cH(\cT_\infty)$$
 which takes
$$p_n^{(r)}\lmto p_n(\infty),\;\;\one_{n\dz}^{(r)}\lmto \one_{n\wh\dz}^{{\rm add}\cT_\infty}.$$
 Therefore,
$$\big\{p_n^{(r)}, \one_{n\dz}^{(r)}\big\}=\{p_n(\infty),\one_{n\wh\dz}^{{\rm add}\cT_\infty}\}
=\{p_n(\infty),\one_{n\wh\dz}^\reg\}
=\sum_{\lz \vdash n}\frac{\prod_{s=1}^{\ell(\lz)-1}(1-q^s)}{a_\lz(q)}.$$
 This finishes the proof of the lemma.
\end{proof}

\noindent{\bf Proof of Theorem \ref{Main-thm-1}}. Let $Q$ be an arbitrary tame acyclic
quiver and $\sz$ be an automorphism of $Q$. Then $A={\frak A}(Q,\sz;q)$ is a tame
hereditary algebra over $\bbF_q$, and for each fixed positive integer $n$,
by Proposition \ref{prim-elt-reg-tame}, we have a basis
$$\{p_m(x)\mid x\in \bbP_{\bbF_q,\sz}^1\;\text{ with }\; m \deg(x)=n\}$$
of $\cH_v(\scrR_A)^\prim_{n\dz}$. Since $\cH(\cT_x)\cong \cH_{v^{e_x}}^0(C_{r_x})$
for each $x\in \bbP_{\bbF_q,\sz}^1$, where $r_x$ is the rank of $\cT_x$, it
follows from Lemma \ref{key-lemma} that if $m\deg(x)=n$, then
$$\{p_m(x),\one_{n\dz}^\reg\}=\sum_{\lz \vdash m}
\frac{\prod_{s=1}^{\ell(\lz)-1}(1-q^{se_x})}{a_\lz(q^{e_x})}.$$
 By Propositions \ref{inclusion-reg-prim} and \ref{codim=1}, it remains to
 show that the linear function
$$\bbI^\reg_{n\dz}: \cH(\scrR_A)^{\prim}_{n\dz}\lra \bbC$$
is nonzero. Since there always exists a tube $\cT_x$ with $\deg(x)=n$,
we have $p_1(x)\in \cH(\scrR_A)^{\prim}_{n\dz}$ satisfying
$$\{p_1(x), \one_{n\dz}^\reg\}=\frac{1}{q^{e_x}-1}\not=0.$$
Therefore, $\bbI^\reg_{n\dz}$ is nonzero. This finishes the proof.

\medskip

To prove Theorem \ref{Main-thm-2}, we need the following lemma. Its proof involves
Fourier transforms of Ringel--Hall algebras and will be given in next section.

\begin{Lem} \label{value-xi(q,n)} For each prime power $q$ and positive integer $n$,
$$\sum_{\lz \vdash n}\frac{\prod_{s=1}^{\ell(\lz)-1}(1-q^s)}{a_\lz(q)}
=\frac{1}{q^n-1}.$$
\end{Lem}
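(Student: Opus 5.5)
The plan is to work entirely inside the classical Hall algebra $\cH^0_v(C_1)$ and read the left-hand side as a Green pairing. By Lemma \ref{key-lemma} with $r=1$, or directly from \eqref{prim-elt-Jordan}, the left-hand side equals $\{p_n,\one_n\}$, where $\one_n=\sum_{\lz\vdash n}[I_\lz]$. So the claim becomes
$$\{p_n,\one_n\}=\frac{1}{q^n-1}.$$
I will compute this pairing by transporting everything to symmetric functions, which is the ``Fourier transform'' step. For $C_1$ the Euler form vanishes, so the twisted and untwisted structures coincide.

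Step 1 (generating function for $\one=\sum_n\one_n$). By Riedtmann's formula,
$$\sum_{[M]}F^M_{X,Y}\,\frac{a_Xa_Y}{a_M}=\frac{|\Ext^1(X,Y)|}{|\Hom(X,Y)|}=1,$$
since $\lr{\udim X,\udim Y}=0$ for the Jordan quiver. Hence
$$\dt(\one_n)=\sum_{a+b=n}\one_a\otimes\one_b,$$
so $\one$ is group-like. The Hall algebra is a commutative and cocommutative Hopf algebra generated by the primitive elements $p_m$, and $\dim\cH^0_v(C_1)^\prim_m=1$. Therefore we can write
$$\one=\exp\Big(\sum_{m\ge1}\beta_m p_m\Big)$$
for uniquely determined scalars $\beta_m$. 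Pairing with $p_n$ and using adjointness, every product of two or more positive-degree factors pairs to zero against a primitive element. This gives
$$\{p_n,\one_n\}=\beta_n\{p_n,p_n\},$$
so it remains to determine $\beta_n$ and $\{p_n,p_n\}$.

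Step 2 (Macdonald's isomorphism). Use $\psi:\cH^0_v(C_1)\otimes\mathbb Q\to\Lambda$, $[I_\lz]\mapsto q^{-n(\lz)}P_\lz(x;t)$ with $t=q^{-1}$ (\cite[Ch.~III]{Mac}). Under $\psi$:
\begin{itemize}
\item $p_n$ goes to the power sum $p_n(x)$, by the identity $p_n=\sum_\lz(1-t)\cdots(1-t^{\ell(\lz)-1})\,t^{n(\lz)}P_\lz$ in \cite[III.7]{Mac}, up to the normalisation in \eqref{prim-elt-Jordan};
\item the Green form becomes $q^{-n}$ times the Hall--Littlewood scalar product, using \eqref{auto-group-partition} and $\lr{P_\lz,P_\lz}_t=b_\lz(t)^{-1}$.
\end{itemize}
Since $\langle p_\lz,p_\mu\rangle_t=\dz_{\lz\mu}z_\lz\prod_i(1-t^{\lz_i})^{-1}$, I expect $\{p_n,p_n\}=q^{-n}\,n/(1-q^{-n})$.

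Step 3. Identify $\psi(\one)$. Because $\psi(\one)$ is group-like in $\Lambda$ and is determined by its degree-one part $[I_{(1)}]\mapsto h_1$, I expect $\psi(\one)=\sum_n h_n$, i.e. $\exp(\sum p_m/m)$. Equivalently, this is the identity $\sum_{|\lz|=n}t^{n(\lz)}P_\lz(x;t)=h_n$ from \cite[III.2, Ex.~1]{Mac}. Hence $\beta_n=1/n$, and
$$\{p_n,\one_n\}=\frac1n\cdot\frac{n\,q^{-n}}{1-q^{-n}}=\frac{1}{q^n-1}.$$

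The main obstacle is bookkeeping of normalisations: the power of $q$ relating the Green form to $\lr{-,-}_t$, and the exact scalar relating $p_n$ to $\psi^{-1}(p_n(x))$. To guard against errors, I would fix these constants using the case $n=1$, where both sides are $1/(q-1)$. I would also check $n=2$ directly: the left-hand side is
$$\frac{1}{q^2-q}+\frac{1-q}{(q^2-1)(q^2-q)}=\frac{1}{q^2-1},$$
which agrees with the claim.
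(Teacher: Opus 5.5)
Your argument is correct, but it takes a genuinely different route from the paper. The paper never leaves the Ringel--Hall setting. It applies the Fourier transform $\Phi:\cH_v(K_2)\to\cH_v(C_2)$ to the primitive element $p_n(0)-p_n(\infty)$, and uses \cite{DM} (Fourier transforms preserve primitivity), the basis of $\cH_v(C_2)^{\prim}_{n\dz}$, and the vertex-swapping symmetry of $C_2$ to force $\Phi(p_n^{K_2})([nS_1[2]])=\Phi(p_n^{K_2})([nS_2[2]])$. It then evaluates both sides explicitly. One side is computed by counting matrices conjugate to $J_\lz$, which produces your left-hand side up to the factor $q^{-n^2/2}|\GL_n(\bbF_q)|$. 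The other side is computed through the additive-character sum $\sum_{X\in\GL_n(\bbF_q)}\psi(\tr X)=(-1)^nq^{n(n-1)/2}$. Your proof replaces all of this machinery (Theorem \ref{Main-thm-1} for $K_2$, Hubery's results on $C_2$, the Fourier transform) by Macdonald's dictionary between the classical Hall algebra and $\Lambda$ with the Hall--Littlewood scalar product. It is shorter and explains the identity: $\one=\exp(\sum_m p_m/m)$, so $\{p_n,\one_n\}=\frac1n\{p_n,p_n\}$. It also yields the companion identity $\{p_n,p_n\}=n/(q^n-1)$ from the paper's closing remark at no extra cost. What the paper's route buys is an illustration of Fourier transforms inside its own framework, independent of symmetric function theory.

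Two small remarks on rigour.

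First, the normalisations you were worried about are exact. With $t=q^{-1}$, Macdonald's expansion $p_n(x)=\sum_{|\lz|=n}t^{n(\lz)}\prod_{i=1}^{\ell(\lz)-1}(1-t^{-i})P_\lz(x;t)$ gives $\psi(p_n)=p_n(x)$ on the nose for your isomorphism $\psi$. Also, \eqref{auto-group-partition} says $a_\lz(q)=q^{n+2n(\lz)}b_\lz(q^{-1})$ with $b_\lz(t)=\prod_{i\geq 1}\prod_{j=1}^{m_i(\lz)}(1-t^j)$. Hence $\{x,y\}=q^{-n}\langle\psi(x),\psi(y)\rangle_t$ in degree $n$. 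Note that a check at $n=1$ could not have detected an $n$-dependent constant anyway.

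Second, a group-like element is not determined by its degree-one part: $\exp(p_1+cp_2)$ is group-like for every $c$. So $\psi(\one_n)=h_n$ genuinely rests on the identity $\sum_{|\lz|=n}t^{n(\lz)}P_\lz(x;t)=h_n$, which you do cite. Once you have it, Step 1 is unnecessary. Since $h_n=\sum_{\mu\vdash n}z_\mu^{-1}p_\mu$, you get directly $\{p_n,\one_n\}=q^{-n}\langle p_n,h_n\rangle_t=q^{-n}/(1-q^{-n})=1/(q^n-1)$.
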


As a consequence of the lemma and Theorem \ref{Main-thm-1}, we obtain the following
result which strengthens \cite[Cor.~6.10]{Hen} and covers Theorem \ref{Main-thm-2}
given in the introduction.

\begin{Thm} \label{prim-elt-space-basis}
Let $n\geq 1$ and fix $x\in \bbP_{\bbF_q,\,\sz}^1$ with $n=m\deg(x)$ for some $m\geq 1$.
Then the set
$$\bigg\{ p_m(x)-\frac{q^{te_y}-1}{q^{me_x}-1}p_t(y)\;\bigg|\; y\in
\bbP_{\bbF_q,\;\sz}^1,\; y\not=x,\;\text{ and }\;\,t\deg(y)=n\bigg\}$$
is a basis of $\cH_v(Q,\sz)^{\prim}_{n\dz}$. If, moreover,
$\sz=\id$, then the set
$$\bigg\{ p_m(x)-p_t(y)\;\bigg|\; y\in \bbP_{\bbF_q}^1,\; y\not=x,\;
\text{ and }\;\,t\deg(y)=n\bigg\}$$
is a basis of $\cH_v(Q)^{\prim}_{n\dz}$.
\end{Thm}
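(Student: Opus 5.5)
The plan is to combine Theorem~\ref{Main-thm-1} with Lemma~\ref{value-xi(q,n)} and Proposition~\ref{prim-elt-reg-tame}. By Theorem~\ref{Main-thm-1}, $\cH_v(Q,\sz)^{\prim}_{n\dz}=\Ker\bbI^\reg_{n\dz}$, where $\bbI^\reg_{n\dz}$ is regarded as a linear function on $\cH(\scrR_A)^{\prim}_{n\dz}$. By Proposition~\ref{prim-elt-reg-tame}, the space $\cH(\scrR_A)^{\prim}_{n\dz}$ has basis $\{p_t(y)\mid y\in\bbP^1_{\bbF_q,\sz},\ t\deg(y)=n\}$. Hence it suffices to compute $\bbI^\reg_{n\dz}(p_t(y))$ for each basis vector and then describe the kernel of a linear functional in terms of that basis.

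First, I evaluate the functional on each basis element. For $t\deg(y)=n$, the proof of Theorem~\ref{Main-thm-1} (via Lemma~\ref{key-lemma}, applied over $\bbF_{q^{e_y}}$ through $\cH(\cT_y)\cong\cH^0_{v^{e_y}}(C_{r_y})$) gives
$$\{p_t(y),\one^\reg_{n\dz}\}=\sum_{\lz\vdash t}\frac{\prod_{s=1}^{\ell(\lz)-1}(1-q^{se_y})}{a_\lz(q^{e_y})}.$$
Here I should check that pairing against $\one^\reg_{n\dz}$ agrees with pairing against $\one^{{\rm add}\cT_y}$. This holds because $p_t(y)$ is supported in $\add\cT_y$ and the Green form is diagonal. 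Applying Lemma~\ref{value-xi(q,n)} with $q$ replaced by the prime power $q^{e_y}$ gives
$$c_y:=\bbI^\reg_{n\dz}(p_t(y))=\frac{1}{q^{te_y}-1}\neq 0.$$

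Next comes the linear algebra. Let $x$ be the fixed point with $m\deg(x)=n$. For each $y\ne x$ with $t\deg(y)=n$, the element $p_m(x)-\frac{c_x}{c_y}p_t(y)$ lies in the kernel. Substituting the values above, $c_x/c_y=\frac{q^{te_y}-1}{q^{me_x}-1}$, which matches the stated coefficient. These elements are linearly independent, since each involves a distinct basis vector $p_t(y)$ with nonzero coefficient. Their number is $\dim\cH(\scrR_A)^{\prim}_{n\dz}-1$, and by Proposition~\ref{codim=1} this equals $\dim\cH_v(Q,\sz)^{\prim}_{n\dz}$. Alternatively, the dimension follows because the kernel of a nonzero functional has codimension one. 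So the set is a basis.

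For $\sz=\id$, Remark~\ref{homog-tube-sigma} gives $d_y=e_y$ for every $y$, so $te_y=t\deg(y)=n=me_x$. The coefficient therefore equals $1$, which yields the second statement.

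The only substantive input beyond bookkeeping is Lemma~\ref{value-xi(q,n)}, whose proof is deferred to the next section. I also need to confirm that it applies verbatim over $\bbF_{q^{e_y}}$, which it does since it is stated for every prime power. A minor point is to verify $e_x=d_x$ in the non-homogeneous case by convention, so that the formula covers all tubes uniformly.
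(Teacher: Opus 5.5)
Your proposal is correct and follows the paper's route. The paper states this theorem as a direct consequence of Theorem~\ref{Main-thm-1}, Proposition~\ref{prim-elt-reg-tame}, the evaluation
$$\{p_t(y),\one^{\reg}_{n\dz}\}=\sum_{\lz\vdash t}\frac{\prod_{s=1}^{\ell(\lz)-1}(1-q^{se_y})}{a_\lz(q^{e_y})}$$
from the proof of Theorem~\ref{Main-thm-1}, and Lemma~\ref{value-xi(q,n)} applied over $\bbF_{q^{e_y}}$, followed by the codimension-one linear algebra you carry out. Your extra checks are the right ones: diagonality of the Green form lets you replace $\one^{\reg}_{n\dz}$ by the sum over the tube, and $e_y=d_y$ when $\sz=\id$, which makes the coefficient equal to $1$.
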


\medskip

\section{The proof of Lemma \ref{value-xi(q,n)}}

This section is devoted to proving Lemma \ref{value-xi(q,n)}. The strategy is to
apply Fourier transforms on Ringel--Hall algebras of the Kronecker quiver and
the cyclic quiver with two vertices.

%We first recall the notion of Fourier transforms for Ringel--Hall algebras
%of quivers given in \cite{L90,SVd2}.
%Analogously, we can define another Fourier transform $\ol{f}$ by
%\begin{equation*}
%  \ol{f}(u,w)=\wh{f}(u,-w)=q^{-{\rm dim} V/2}\sum_{v\in V} f(u,v)\overline{\psi}(\lr{v,w}),
%\end{equation*}
%where $\overline{\psi}(\lr{v,w})$ is the complex conjugate of ${\psi}(\lr{v,w})$.
%We have the following well-known properties about $\wh{f}$ and $\ol{f}$ which will be used later.
%
%\begin{Lem}[{\cite[Lemma 5.1]{SVd1}}]\label{Lem:Fourier_property}
%{\rm (1)} $\ol{\wh{f}}=\wh{\ol{f}}=f$, i.e., $\ol{f}$ is the Fourier inverse transform of $\wh{f}$.
%%    \item[(2)] If $U,V,W$ are, respectively, a $H$-set and $H$-representations for a finite group $H$, and if $\lr{-,-}$ is $H$-invariant, then $\widehat{hf}=h\hat{f}$ for $h\in H$,
%
%{\rm (2)} $(\wh{f},\,\ol{g})=(f,g)$.
%\end{Lem}
Let $Q=(Q_0=I,Q_1)$ be a finite quiver. In order to define Fourier transforms of Ringel--Hall
algebras, we give an alternative definition of the Ringel--Hall algebra $\cH_v(Q)$ in terms
of complex valued functions over representation spaces of $Q$ due to Lusztig \cite{L90}.
More precisely, for each $\bfd=(d_i)_i\in\bbN I$, let $V=\oplus_{i\in I}V_i$ be an $I$-graded
vector space of dimension vector $\bfd$ over $\bbF_q$ and define
$$E_V=E_{Q,V}:=\prod_{\rho\in Q_1}\Hom_k(V_{\tail\rho},V_{\head\rho}).$$
Then a point $x=(x_\rho)_\rho$ of $E_V$ gives rise to a
representation $V(x)=(V(x)_i,V(x)_\rho)$ of $Q$ over $\bbF_q$ with $V(x)_i=V_i$ for
$i\in I$ and $V(x)_\rho=x_\rho$ for $\rho\in Q_1$. The group
$G_V:=\prod_{i\in I}{\rm GL}(V_i)$ acts on $E_V$ by conjugation
$$(g_i)_i\cdot(x_\rho)_\rho=(g_{\head\rho}x_\rho g_{\tail\rho}^{-1})_\rho,$$
and the $G_V$-orbits ${\mathfrak O}_x=G_V\!\cdot\! x$ in $E_V$ correspond bijectively to the
isoclasses $[V(x)]$ of representations of $Q$ with dimension vector $\bfd$.

Furthermore, let $\cF(Q)_V$ denote the set of all $G_V$-invariant functions
$E_V\ra\bbC$. Then $\cF(Q)_V$ is a complex vector space with a basis
all the characteristic functions $\chi_{\fkO_x}$ of $G_V$-orbits in $E_V$.
Since, up to isomorphism, the vector space $\cF(Q)_V$ only depends on $\bfd$, we simply
set $\cF(Q)_\bfd=\cF(Q)_V$. Finally, set
$$\cF(Q)=\bigoplus_{\bfd\in\bbN I}\cF(Q)_\bfd.$$
 Take $\bfd,\bfd',\bfd''\in\bbN Q_0$ with $\bfd=\bfd'+\bfd''$ and let $V, V',V''$ be
$I$-graded $\bbF_q$-vector spaces of dimension vectors $\bfd,\bfd',\bfd''$, respectively, satisfying
$V_i=V_i'\oplus V_i''$ for $i\in I$. Then for $f\in\cF(Q)_{\bfd'}=\cF(Q)_{V'}$ and
$g\in \cF(Q)_{\bfd''}=\cF(Q)_{V''}$, define their convolution product $f*g\in\cF(Q)_{\bfd}=\cF(Q)_V$
by setting
$$f*g(x)=v^{\lr{\bfd',\bfd''}}\sum_{U\subset V(x)}f(V(x)/U)g(U),\;\,\forall\,x\in E_V,$$
 where the sum is over all subrepresentations $U$ of $V(x)$. By \cite{L90},
$\cF(Q)$ is an associative algebra, and there is an algebra isomorphism
$$\cF(Q)\lra \cH_v(Q),\;\chi_{\fkO_x}\lmto [V(x)].$$
In what follows, we will simply identify $\cF(Q)$
with $\cH_v(Q)$. Thus, each element in $\cH_v(Q)$ is also viewed as a function
on the associated representation space, that is, for each $M\in\Rep_{\bbF_q}Q$, $[M]$
is viewed as the characteristic function $\chi_{[M]}=\chi_{\fkO_M}$, and in general,
each $x=\sum_{[M]} \lz_M[M]\in \cH_v(Q)$ is identified with the function
$\sum_{[M]}\lz_M\chi_{[M]}$.

Now we are ready to define Fourier transforms. Let $E$ be a subset of
$Q_1$ and denote by $Q'$ the quiver obtained from $Q$ by reversing all
the arrows in $E$. Given $\bfd\in\bbN I$, let $V=\oplus_{i\in I}V_i$ be an $I$-graded
$\bbF_q$-vector space of dimension vector $\bfd$ and set
$$X_V=\prod_{\rho\in Q_1\backslash E}\Hom_k(V_{\tail\rho},V_{\head\rho}), \;
Y_V=\prod_{\rho\in E}\Hom_k(V_{\tail\rho},V_{\head\rho}),\;
Y'_V=\prod_{\rho\in E}\Hom_k(V_{\head\rho},V_{\tail\rho}).$$
Then
$$E_{Q,V}=X_V\times Y_V \;\text{ and }\; E_{Q',V}=X_V\times Y_V',$$
 and there is a non-degenerate bilinear form defined by
$$\lr{-,-}:Y_V\times Y_V'\lra \bbF_q,\;((C_\rho),(D_\rho))\longmapsto
\sum_{\rho\in E}\tr(C_\rho D_\rho).$$
 Let us fix a non-trivial additive character $\psi:\bbF_q\ra \bbC^\times$.
For each $f\in\cF(Q)_\bfd=\cF(Q)_V$, that is, a $G_V$-invariant function
$f:E_{Q,V}=X_V\times Y_V\ra\bbC$, define
$${\wh f}:E_{Q',V}=X_V\times Y'_V\lra\bbC$$
in $\cF(Q')_\bfd$ by setting
$${\wh f}(x,y')=q^{-{\rm dim} Y_V/2}\sum_{y\in Y_V}f(x,y)\psi(\lr{y,y'}),\;\forall\,
(x,y')\in X_V\times Y'_V.$$
 This gives a $\bbC$-linear map
$$\Phi_\bfd: \cF(Q)_V\lra \cF(Q')_V,\;f\longmapsto \wh f.$$
Finally, we put
$$\Phi_{Q',Q}=(\Phi_\bfd)_\bfd:\cF(Q)=\bigoplus_{\bfd\in\bbN I}\cF(Q)_\bfd\lra
\bigoplus_{\bfd\in\bbN I}\cF(Q')_\bfd=\cF(Q'),$$
 called the Fourier transform.

%Similarly, by defining ${\ol f}:E_{Q',V}=X_V\times Y'_V\ra\bbC$
%in $\cF(Q')_\bfd$ via
%$${\ol f}(x,y')=q^{-{\rm dim} Y_V/2}\sum_{y\in Y_V}f(x,y)\oline{\psi}(\lr{y,y'}),$$
%we obtain a $\bbC$-linear map $\ol\Phi_\bfd: \cF(Q)_V\ra \cF(Q')_V,\;f\mapsto \ol f$,
%and thus, a map
%$$\ol\Phi_{Q',Q}=(\ol\Phi_\bfd)_\bfd:\cF(Q)=\bigoplus_{\bfd\in\bbN I}\cF(Q)_\bfd\lra
%\bigoplus_{\bfd\in\bbN I}\cF(Q')_\bfd=\cF(Q').$$
%\begin{Rem} By the definition, both $\Phi_{Q',Q}$ and $\ol\Phi_{Q',Q}$ depend on the
%choice of $\psi$. Thus, we sometimes write $\Phi_{Q',Q}^\psi=\Phi_{Q',Q}$ and
%$\ol\Phi_{Q',Q}^\psi=\ol\Phi_{Q',Q}$ in order to emphasize the role of $\psi$.
%
%\end{Rem}
By identifying $\cF(Q)$ and $\cF(Q')$ with
$\cH_v(Q)$ and $\cH_v(Q')$, respectively, we view $\Phi_{Q',Q}$ as
a map $\cH_v(Q)\ra\cH_v(Q')$. The following lemma is stated in \cite[\S13]{L90}; see
\cite[Lem.~5.1\ \& Prop.~7.1]{SVd1}
for a proof.

\begin{Lem} \label{Lusztig-SV} Keep all the notation above. Then
$\Phi_{Q',Q}: \cH_v(Q)\ra\cH_v(Q')$ is an isomorphism of $\bbC$-algebras.
% with the inverse $\ol\Phi_{Q,Q'}: \cH_v(Q')\ra\cH_v(Q)$.
\end{Lem}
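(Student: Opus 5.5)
We reduce to the case where $E$ consists of a single arrow $\rho:i\to j$. Reversing the arrows of $E$ one at a time factors $\Phi_{Q',Q}$ as a composition of single-arrow transforms. Indeed, since $Y_V=\prod_{\rho\in E}\Hom(V_{\tail\rho},V_{\head\rho})$ and the pairing $\lr{-,-}$ is a sum over $\rho\in E$, the character factors as $\psi(\lr{y,y'})=\prod_{\rho}\psi(\tr(C_\rho D_\rho))$, and the normalising constant $q^{-\dim Y_V/2}$ is multiplicative. So it suffices to treat one arrow.

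\textbf{Step 1: bijectivity.} First we check that $\wh f$ is again $G_V$-invariant. This holds because $\tr((g_jC g_i^{-1})(g_iDg_j^{-1}))=\tr(CD)$, so the pairing is $G_V$-invariant. Next, let $\ol f$ be defined by the same formula with $\psi$ replaced by its conjugate $\ol\psi$. The orthogonality relation
$$\sum_{y'\in Y'_V}\psi(\lr{y,y'})=q^{\dim Y_V}\delta_{y,0},$$
which uses the non-degeneracy of the pairing and the non-triviality of $\psi$, gives Fourier inversion: the conjugate transform $\ol\Phi_{Q,Q'}$ is a two-sided inverse of $\Phi_{Q',Q}$. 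Hence each $\Phi_\bfd$ is a linear isomorphism $\cF(Q)_\bfd\to\cF(Q')_\bfd$, and in particular the two spaces have the same dimension.

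\textbf{Step 2: multiplicativity.} This is the main step. Take $f\in\cF(Q)_{V'}$ and $g\in\cF(Q)_{V''}$ with $V=V'\oplus V''$. We rewrite $f*g$ as a sum over flags. By $G_V$-invariance it suffices to evaluate $\wh{f*g}$ at points $(x,y')$ for which $V''$ is a subrepresentation of $V(x,y')$ for $Q'$, summed over the $G_V$-translates. Concretely, we express
$$(f*g)(z)=v^{\lr{\bfd',\bfd''}}\,\frac{1}{|P|\cdot(\text{stabiliser factor})}\sum_{g\in G_V}\,(f\times g)(\pi(g^{-1}z)),$$
where the sum is restricted to those $g$ with $g^{-1}z$ in the parabolic subspace $E^{\le}$ of block upper-triangular points (those preserving $V''$), $P$ is the parabolic subgroup, and $\pi$ is the projection to the block diagonal $E_{V'}\times E_{V''}$.

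For the arrow $\rho$, block upper-triangular $y\in\Hom(V_i,V_j)$ decomposes as
$$(y_{11},y_{22},y_{12}),\qquad y_{12}\in \Hom(V''_i,V'_j),$$
and the integrand does not depend on $y_{12}$. Pairing with $y'$ and summing over $y_{12}$ produces $q^{\dim\Hom(V''_i,V'_j)}$ times the indicator that the component $y'_{21}\in\Hom(V'_j,V''_i)$ vanishes. This vanishing is exactly the condition that $V''$ is a subrepresentation for $Q'$. The diagonal sums then reproduce $\wh f\times\wh g$ on $E_{Q',V'}\times E_{Q',V''}$. Thus $\wh{f*g}=c\cdot \wh f*'\wh g$ for an explicit power $c$ of $v$.

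\textbf{Step 3: matching the constants.} This is the delicate bookkeeping and the step I expect to be the main obstacle. The power $c$ collects three contributions:
\begin{itemize}
\item the twists $v^{\lr{\bfd',\bfd''}_Q}$ versus $v^{\lr{\bfd',\bfd''}_{Q'}}$, which differ by $v^{d'_jd''_i-d'_id''_j}$ (reversing $\rho$ changes the Euler form only through the term $-d'_i d''_j$);
\item the normalisations $q^{-\dim Y/2}$, namely $q^{-d_id_j/2}$ for $V$ against $q^{-(d'_id'_j+d''_id''_j)/2}$ for the factors;
\item the factor $q^{d''_id'_j}$ from Step 2, together with the ratio of the sizes of the $Q$- and $Q'$-parabolic subspaces, which governs how many flags each point contributes.
\end{itemize}
I would check that these multiply to $1$ by writing $d_id_j=(d'_i+d''_i)(d'_j+d''_j)$. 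Should a discrepancy of the form $v^{\pm(d'_id''_j-d''_id'_j)}$ remain, it must be absorbed by the difference between the two Euler-form twists; this is precisely why the twisted multiplication is required. Steps 1 and 2 together then give that $\Phi_{Q',Q}$ is an algebra isomorphism.
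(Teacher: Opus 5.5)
The paper does not prove this lemma; it refers to \cite[\S13]{L90} and \cite[Lem.~5.1 \& Prop.~7.1]{SVd1}. Your outline is the standard verification: Fourier inversion for bijectivity, then a direct computation of $\wh{f*g}$ via the parabolic decomposition. Step~1 and the single-arrow reduction are fine. The gap is Step~3, which carries all the content of multiplicativity. You do not carry it out, and your list of contributions is not the right one. Your fallback, that a leftover power of $v$ ``must be absorbed by the Euler-form twists'', is circular: those twists are already one of your listed contributions, so nothing is left to absorb anything.

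The root of the problem is in Step~2, where the off-diagonal block is on the wrong side.

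\emph{Which block is free.} In the paper's convolution the $\bfd''$-part is the \emph{sub}. So $y\in\Hom(V_i,V_j)$ preserves $V''$ iff its $V_i''\to V_j'$ block vanishes, and the free block is $\Hom(V_i',V_j'')$, not $\Hom(V_i'',V_j')$. Summing it against $\psi(\tr(yy'))$ gives $q^{d_i'd_j''}$ times the indicator that the $V_j''\to V_i'$ block of $y'$ vanishes. That is exactly $y'(V_j'')\subseteq V_i''$. Your condition, that $y'_{21}\in\Hom(V_j',V_i'')$ vanishes, says instead that $V'$ is $Q'$-stable.

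\emph{No parabolic-subspace ratio.} Write $(f*g)(z)=v^{\lr{\bfd',\bfd''}}|P|^{-1}\sum_{h\in G_V,\,h^{-1}z\in E^{\le}}(f\times g)(\pi(h^{-1}z))$, where $P$ is the stabiliser of the flag $V''\subset V$; the factor is exactly $|P|^{-1}$, with no further stabiliser term. Substitute $\tilde y=h^{-1}y$ and use $G_V$-invariance of $\lr{-,-}$. The off-diagonal sum then produces the indicator of $E^{\le}_{Q'}$ by itself. The same $|P|^{-1}\sum_{h\in G_V}$ appears in $\wh f*'\wh g$, because $P$ does not depend on orientation.

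\emph{The constant.} Only three factors remain, and using $d_id_j=(d_i'+d_i'')(d_j'+d_j'')$ they give
\[
v^{\lr{\bfd',\bfd''}_Q-\lr{\bfd',\bfd''}_{Q'}}\cdot q^{-d_id_j/2}\,q^{(d_i'd_j'+d_i''d_j'')/2}\cdot q^{d_i'd_j''}
=v^{d_i''d_j'-d_i'd_j''}\cdot v^{d_i'd_j''-d_i''d_j'}=1 .
\]
With your factor $q^{d_i''d_j'}$ in place of $q^{d_i'd_j''}$ you would get $v^{2(d_i''d_j'-d_i'd_j'')}$ instead. For example, for $[S_1][S_2]$ on $1\to 2$ this gives $v^{-2}$, whereas both $\Phi([S_1][S_2])$ and $[S_1'][S_2']$ equal $[S_1'\oplus S_2']$.

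With this correction your argument is complete. The correction can be made for all $\rho\in E$ at once, since the off-diagonal sums and the Euler-form differences factor over the arrows, so the single-arrow reduction is not even needed.
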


\begin{Rem} Fourier transforms can also be defined on Ringel--Hall
algebras of $\bbF_q$-algebras associated with quivers with automorphism;
see \cite{Ma-C}.
\end{Rem}

Let us look at a simple example of the Fourier transform
$$\Phi_{Q',Q}: \cH_v(Q)\lra \cH_v(Q'),$$
where the quivers $Q$ and $Q'$ are given by
\begin{center}
\setlength{\unitlength}{1mm}
\begin{picture}(100,10)
 \put(15,5){\circle*{1}} \put(30,5){\circle*{1}}
 \put(70,5){\circle*{1}} \put(85,5){\circle*{1}}
\put(16,5){\vector(1,0){13}} \put(84,5){\vector(-1,0){13}}

\put(0,5){$Q:$} \put(55,5){$Q':$}
\put(11,4){$1$} \put(32,4){$2$,}
\put(66,4){$1$} \put(87,4){$2$.}
\put(21,5.8){$\az$} \put(76,5.8){$\az'$}
\end{picture}
\end{center}
 Let $S_i$ and $P_i$ denote the simple and projective representation of
$Q$ corresponding to vertex $i$, respectively, while let $S'_i$ and
$P'_i$ denote the simple and projective representation of
$Q'$ corresponding to vertex $i$, respectively. Then
$\Phi_{Q',Q}([S_i])=[S_i']$ for $i=1,2$, and
$$\aligned
\Phi_{Q',Q}([P_1])&=\Phi_{Q',Q}(v[S_1][S_2]-[S_2][S_1])=v[S'_1][S'_2]-[S'_2][S'_1]\\
&=-v^{-1}[P'_2]+(v-v^{-1})[S_1'\oplus S_2']).
\endaligned$$
Hence, for each $n\geq 1$,
$$\Phi_{Q',Q}([P_1]^n)=\big( \Phi_{Q',Q}([P_1])\big)^n=(-1)^nv^{-n}[P_2']^n+\text{other terms}.$$
Since
$$[nP_1]=\frac{v^{-n(n-1)}}{[n]!}[P_1]^n\;\text{ and }\;\, [n P_2']=\frac{v^{-n(n-1)}}{[n]!}[P'_2]^n,$$
where $[n]!=[1][2]\cdots [n]$ with $[s]=(v^s-v^{-s})/(v-v^{-1})$ for $1\leq s\leq n$,
 we have
$$\big(\Phi_{Q',Q}([nP_1])\big)([nP_2'])=(-1)^nv^{-n}.$$

On the other hand, for each matrix $X\in \bbF_q^{n\times n}$, let $V(X)$
be the representation of $Q$ given by
$$V(X)_1=V(X)_2=\bbF_q^n\;\text{ and }\;\, V(X)_\az=X.$$
Then $V(X)\cong nP_1$ if and only if $X$ is invertible. Since
$$n P_2'\cong\; \bbF_q^n \stackrel{I_n}{\longleftarrow} \bbF_q^n,$$
where $I_n$ is the identity matrix of size $n$, it follows from the definition
of $\Phi_{Q',Q}$ that
$$\aligned
\big(\Phi_{Q',Q}([n P_1])\big)([n P_2'])&=q^{-n^2/2}\sum_{X \in {\bbF_q}^{n \times n}}
\chi_{[nP_1]}([V(X)])\psi(\tr (XI_n))\\
  &=q^{-n^2/2}\sum_{X \in \GL_n(\bbF_q)}\psi(\tr\,X),
\endaligned$$
where $\psi$ is the non-trivial additive character of $\bbF_q$ arising in
the definition of $\Phi_{Q',Q}$.

Consequently, we obtain the equality
\begin{equation} \label{Fourier-transf-A_2}
\sum_{X \in \GL_n(\bbF_q)}\psi(\tr\, X)=(-1)^nq^{n(n-1)/2},
\end{equation}
which is useful later on.

\bigskip

\noindent{\bf Proof of Lemma \ref{value-xi(q,n)}.} Consider the Fourier transform
$$\Phi:=\Phi_{C_2,K_2}: \cH_v(K_2)\lra \cH_v(C_2)$$
associated with the Kronecker quiver $K_2$ and the cyclic quiver $C_2$ of two vertices:
\begin{center}
\setlength{\unitlength}{1mm}
\begin{picture}(100,15)
 \put(15,6){\circle*{1}} \put(35,6){\circle*{1}}
 \put(70,6){\circle*{1}} \put(90,6){\circle*{1}}
\put(16,7){\vector(1,0){18}} \put(16,5){\vector(1,0){18}}
\put(71,7){\vector(1,0){18}} \put(89,5){\vector(-1,0){18}}
\put(0,5){$K_2:$} \put(55,5){$C_2:$}
\put(11,5){$1$} \put(37,5){$2$} \put(66,5){$1$} \put(92,5){$2$}
\put(24,8){$\az$} \put(24,1){$\bz$} \put(79,8){$\az$} \put(79,1){$\bz'$}
\end{picture}
\end{center}
which is defined by inverting the arrow $\bz$ in $K_2$. Note that
$$\cF_{K_2}=\cF_{C_2}=\{m\dz\mid m\geq 1\}\;\text{ with $\dz=(1,1)$.}$$

By the definition, for given representations $M$ and $N$ of $K_2$
and $C_2$, respectively, which are depicted as follows:
\begin{center}
\setlength{\unitlength}{1mm}
\begin{picture}(100,14)
  \put(-1,5){$M:$} \put(53,5){$N:$}
  \put(16,7){\vector(1,0){18}} \put(16,5){\vector(1,0){18}}
  \put(10,5){$\bbF_q^m$} \put(35,5){$\bbF_q^n$,}
  \put(23,9){$X_1$} \put(23,0){$X_2$}
  \put(71,7){\vector(1,0){18}} \put(89,5){\vector(-1,0){18}}
  \put(65,5){$\bbF_q^m$} \put(90,5){$\bbF_q^n$,}
  \put(78,9){$Y_1$} \put(78,0){$Y_2$}
\end{picture}
\end{center}
we have
\begin{equation} \label{formula-Fourier}
\big(\Phi([M])\big)([N])=q^{-mn/2}\sum_{Z\in \bbF_q^{n\times m}}\chi_{[M]}
\big([L(Z)]\big)\psi(\tr(ZY_2)),
\end{equation}
 where $L(Z)$ is given by
\begin{center}
\setlength{\unitlength}{1mm}
\begin{picture}(60,14)
  \put(-5,5){$L(Z):$}
  \put(16,7){\vector(1,0){18}} \put(16,5){\vector(1,0){18}}
  \put(10,5){$\bbF_q^m$} \put(35,5){$\bbF_q^n$.}
  \put(23,9){$Y_1$} \put(23,0){$Z$}
\end{picture}
\end{center}

For each $m\geq 1$, consider the following indecomposable representations of $K_2$:
\begin{center}
\setlength{\unitlength}{1mm}
\begin{picture}(100,12)
  \put(-2.8,5){$I_m(0)\!:$} \put(50,5){$I_m(\infty)\!:$}
  \put(16,7){\vector(1,0){18}} \put(16,5){\vector(1,0){18}}
  \put(10,5){$\bbF_q^m$} \put(35,5){$\bbF_q^m$,}
  \put(23,9){$I_m$} \put(23,0){$J_m$}
  \put(71,7){\vector(1,0){18}} \put(71,5){\vector(1,0){18}}
  \put(65,5){$\bbF_q^m$} \put(90,5){$\bbF_q^m$,}
  \put(78,9){$J_m$} \put(78,0){$I_m$}
\end{picture}
\end{center}
 where $J_m$ is the nilpotent Jordan block of size $m$. Indeed, all the
indecomposables $I_m(0)$ (resp., $I_m(\infty)$), $m\geq 1$,
furnish a complete set of indecomposable representations in a homogeneous
tube $\cT_0$ (resp., $\cT_\infty$) in the Auslander--Reiten quiver of $\Rep_{\bbF_q}K_2$.
For each partition $\lz=(\lz_1,\ldots,\lz_t)\vdash n$, set
$$I_\lz(0)=\bigoplus_{i=1}^t I_{\lz_i}(0)\;\text{ and }\;\,
I_\lz(\infty)=\bigoplus_{i=1}^t I_{\lz_i}(\infty).$$
In other words, $I_\lz(0)$ and $I_\lz(\infty)$ have the form
\begin{center}
\setlength{\unitlength}{1mm}
\begin{picture}(100,14)
  \put(-2.8,5){$I_\lz(0)\!:$} \put(50,5){$I_\lz(\infty)\!:$}
  \put(16,7){\vector(1,0){18}} \put(16,5){\vector(1,0){18}}
  \put(10,5){$\bbF_q^n$} \put(35,5){$\bbF_q^n$,}
  \put(23,9){$I_n$} \put(23,1){$J_\lz$}
  \put(71,7){\vector(1,0){18}} \put(71,5){\vector(1,0){18}}
  \put(65,5){$\bbF_q^n$} \put(90,5){$\bbF_q^n$,}
  \put(78,9){$J_\lz$} \put(78,1){$I_n$}
\end{picture}
\end{center}
 where $J_\lz$ denotes the block diagonal matrix ${\rm diag}(J_{\lz_1},\ldots,J_{\lz_t})$.
Then attached to the tubes $\cT_0$ and $\cT_\infty$, we have two regular primitive
elements in $\cH_v(K_2)$
$$p_n(0)=\sum_{\lz \vdash n}\bigg(\prod_{s=1}^{\ell(\lz)-1}(1-q^s)\bigg)[I_\lz(0)]\;
\text{ and }\;\, p_n(\infty)=\sum_{\lz \vdash n}
\bigg(\prod_{s=1}^{\ell(\lz)-1}(1-q^s)\bigg)[I_\lz(\infty)]$$
whose difference
$$p_n(0)-p_n(\infty)=:p_n^{K_2}$$
 is a primitive element in $\cH_v(K_2)^\prim_{n\dz}$ by Theorem \ref{prim-elt-space-basis};
 see also \cite[Cor.~6.10]{Hen}.

According to \cite[Thm~3.5]{DM}, $\Phi(p_n^{K_2})$ is a primitive element in
$\cH_v(C_2)^\prim_{n\dz}$ which, by Proposition \ref{prim-elt-cyclic}, is a
$\bbC$-linear combination of the elements in the basis set
$$\{p_n^{(2)}\}\cup \{p_m(x)\mid 0\not=x\in \bbA_q^1\;\text{ and }\; m\deg(x)=n\}$$
of $\cH_v(C_2)^\prim_{n\dz}$.

As in Section 4, for each $i\in\{1,2\}$ and $m\geq 1$,
let $S_i[m]$ be the indecomposable nilpotent representation of $C_2$ of dimension $m$
with top $S_i$. We are going to compute
$$\Phi(p_n^{K_2})([nS_1[2]])\;\text{ and }\;\,\Phi(p_n^{K_2})([nS_2[2]]).$$

It is clear that $C_2$ admits an automorphism $\tau$ which swaps vertices $1$
and $2$, as well as arrows $\az$ and $\bz'$. Then $\tau$ induces a bialgebra
isomorphism
$$\tau: \cH_v^0(C_2)\lra \cH_v^0(C_2)$$
 taking particularly $[S_1[m]]\mapsto [S_2[m]]$ and $[S_2[m]]\mapsto [S_1[m]]$.
Then $\tau(p_n^{(2)})$ is again a primitive element in $\cH_v^0(C_2)$ of dimension
vector $n\dz$, which is thus a multiple of $p_n^{(2)}$. By \eqref{prim-elt-cyc-r},
$p_n^{(2)}$ has the form
$$p_n^{(2)}=[S_1[2n]]+[S_2[2n]]+\text{other terms}.$$
 This forces that $\tau(p_n^{(2)})=p_n^{(2)}$. Because $\tau([nS_1[2]])=[nS_2[2]]$, we also have
$$p_n^{(2)}([nS_1[2]])=p_n^{(2)}([nS_2[2]]).$$
 Since for each $0\not=x\in \bbA_q^1$ with $m\deg(x)=n$, $p_m(x)$ does not
support on $[nS_1[2]]$ and $[nS_2[2]]$, it follows that
$$\Phi(p_n^{K_2})([nS_1[2]])=\Phi(p_n^{K_2})([nS_2[2]]).$$
For notational simplicity, we write
$$M_1:=nS_1[2]\;\text{ and }\;\, M_2:=nS_2[2],$$
which clearly have the form
\begin{center}
\setlength{\unitlength}{1mm}
\vspace{-4mm}
\begin{picture}(100,22)
  \put(-5,9){$M_1$:} \put(50,9){$M_2$:}
  \put(16,11){\vector(1,0){18}} \put(34,9){\vector(-1,0){18}}
  \put(9,9){$\bbF_q^n$} \put(35,9){$\bbF_q^n$,}
  \put(23,13){$I_n$} \put(23,4){$0$}

  \put(71,11){\vector(1,0){18}} \put(89,9){\vector(-1,0){18}}
  \put(64,9){$\bbF_q^n$} \put(90,9){$\bbF_q^n$.}
  \put(78,13){$0$} \put(78,4){$I_n$}
\end{picture}
\end{center}
On the one hand, we have
$$\Phi(p_n^{K_2})([M_1])=\sum_{\lz \vdash n}\bigg(\prod_{s=1}^{\ell(\lz)-1}(1-q^s)\bigg)
\bigg(\Phi([I_\lz(0)])([M_1])-\Phi([I_\lz(\infty)])([M_1])\bigg).$$
Applying the formula in \eqref{formula-Fourier} shows that for
a partition $\lz \vdash n$, $\Phi([I_\lz(\infty)])([M_1])=0$ and that
$$\aligned
\Phi([I_\lz(0)])([M_1])&=q^{-n^2/2}\sum_{X\in\bbF_q^{n\times n}:X\sim J_\lz}\psi(0)
=q^{-n^2/2}|\{X \in \bbF_q^{n \times n}\mid X\sim J_\lz\}|\\
&=q^{-n^2/2}\frac{\lvert \GL_n(\bbF_q) \rvert}{a_\lz(q)}=q^{-n^2/2}
\prod_{i=0}^{n-1}(q^n-q^i)\frac{1}{a_\lz(q)},
\endaligned$$
 where $X\sim J_\lz$ means that $X$ and $J_\lz$ are conjugate in
 $\bbF_q^{n\times n}$. Therefore,
$$\aligned
\Phi(p_n^{K_2})([M_1])&=\sum_{\lz \vdash n}
\bigg(\prod_{s=1}^{\ell(\lz)-1}(1-q^s)\bigg) \Phi([I_\lz(0)])([M_1])\\
&=q^{-n^2/2}\sum_{\lz \vdash n}\bigg(\prod_{s=1}^{\ell(\lz)-1}(1-q^s)\bigg)
\bigg(\prod_{i=0}^{n-1}(q^n-q^i)\frac{1}{a_\lz(q)}\bigg)\\
  &=q^{-n^2/2}\prod_{i=0}^{n-1}(q^n-q^i)
\bigg(\sum_{\lz \vdash n}\frac{\prod_{s=1}^{\ell(\lz)-1}(1-q^s)}{a_\lz(q)}\bigg).
\endaligned$$

On the other hand, we have
$$\Phi(p_n^{K_2})([M_2])=\sum_{\lz \vdash n}\bigg(\prod_{s=1}^{\ell(\lz)-1}(1-q^s)\bigg)
\bigg(\Phi([I_\lz(0)])([X_2])-\Phi([I_\lz(\infty)])([M_2])\bigg).$$
Again, by applying the formula in \eqref{formula-Fourier}, we have
$$\aligned
& \Phi([I_\lz(0)])([M_2])=0\;\;\text{ for all $\lz\vdash n$, and}\\
& \Phi([I_\lz(\infty)])([M_2])\not=0 \;\;\text{ only if $\lz=(1^n)$.}
\endaligned$$
Hence,
$$\aligned
\Phi(p_n^{K_2})([M_2])&=-\prod_{i=1}^{n-1}(1-q^i)\Phi([I_{(1^n)}(\infty)])([M_2])\\
&=-q^{-n^2/2}\prod_{i=1}^{n-1}(1-q^i)\sum_{X \in \GL_n(k)}\psi(\tr X)\\
&=-q^{-n^2/2}\prod_{i=1}^{n-1}(1-q^i)(-1)^nq^{n(n-1)/2}   \qquad \text{by (\ref{Fourier-transf-A_2})}\\
&=q^{-n^2/2}\prod_{i=1}^{n-1}(q^n-q^i).
\endaligned$$
 We conclude that
$$q^{-n^2/2}\prod_{i=0}^{n-1}(q^n-q^i)\bigg(\sum_{\lz \vdash n}
\frac{\prod_{s=1}^{\ell(\lz)-1}(1-q^s)}{a_\lz(q)}\bigg)
=q^{-n^2/2}\prod_{i=1}^{n-1}(q^n-q^i),$$
that is,
$$\sum_{\lz \vdash n}\frac{\prod_{s=1}^{\ell(\lz)-1}(1-q^s)}{a_\lz(q)}
=\frac{1}{q^n-1}.$$
This finishes the proof.

\begin{Rem} It might be interesting to compare the identity above with the following two identities
$$\sum_{\lz \vdash n}\frac{\big(\prod_{s=1}^{\ell(\lz)-1}(1-q^s)\big)^2}
{a_\lz(q)}=\frac{n}{q^n-1}$$
and
$$\sum_{\lz \vdash n}\frac{1}{a_{\lz}(q)}=\frac{q^{n(n-1)/2}}{(q-1)(q^2-1)\cdots (q^n-1)},$$
which are given in \cite[Ch.~III]{Mac} and \cite[Ex.~I]{Hua}, respectively.
\end{Rem}

\bigskip

{\bf{Acknowledgements:}} The authors are grateful to Jiuzhao Hua for helpful discussions.
This work was supported by the National Natural Science Foundation of China
(Grant No. 12031007).


\bigskip

\bigskip

\begin{thebibliography}{99}

\bibitem{BG} A. Berenstein and J. Greenstein, {\em Primitively generated Hall algebras},
Pacific J. Math. {\bf 281} (2016), 287--331.

%\bibitem{BGP} I.~N. Bernstein, I.~M. Gelfand and V.~A. Ponomarev,
%{\em Coxeter functors and Gabriel's theorem}, Uspehi Math. Nauk
%{\bf 28} (1973), 19--33.

\bibitem{Bor} R. Borcherds, {\em Generalized Kac--Moody algebras}, J. Algebra
{\bf 115} (1988), 501--512.

\bibitem{BS} T. Bozec and O. Schiffmann, {\em Counting absolutely cuspidals for quivers},
Math. Z. {\bf 292} (2019), 133--149.

\bibitem{CB} W. Crawley-Boevey, {\em Exceptional sequences of representations of quivers},
Canad. Math. Soc. Conference Proc. {\bf 14} (1993), 117--124.

\bibitem{DD05} B. Deng and J. Du, {\em Monomial bases for quantum affine
${\frak sl}_n$}, Adv. Math. {\bf 191} (2005), 276--304.

\bibitem{DD06}  B. Deng and J. Du, {\em Frobenius morphisms and representations
of algebras}, Trans. Amer. Math. Soc. {\bf 358} (2006), 3591--3622.

\bibitem{DDF}
B. Deng, J. Du and Q. Fu, {\em A Double Hall Algebra Approach to Affine Quantum
Schur--Weyl Theory}, London Math. Soc. Lecture Note Series 401, Cambridge, 2012.

\bibitem{DDPW} B. Deng, J. Du, B. Parshall and J. Wang, {\em Finite Dimensional
Algebras and Quantum Groups}, Mathematical Surveys and Monographs
Volume 150, Amer. Math. Soc., Providence 2008.

\bibitem{DM} B. Deng and C. Ma, {\em Fourier transforms on Ringel--Hall algebras},
Algebr. Represent. Theory {\bf 26} (2023), 1913--1930.

\bibitem{DOP} B. Deng and K. Obul and Y. Pang, {\em Representations of quivers
with automorphisms over finite fields} (in Chinese), Sci. Sin. Math. {\bf 47}, (2017), 1--20.

%\bibitem{DX1} B. Deng and J. Xiao, {\em On double Ringel--Hall
%algebras}, J. Algebra {\bf 251} (2002), 110--149.

%\bibitem{DX2} B. Deng and J. Xiao, {\em Ringel--Hall algebras and Lusztig's
%symmetries}, J. Algebra {\bf 255} (2002), 357--372.

\bibitem{DX3} B. Deng and J. Xiao, {\em A new appraoch to Kac's
theorem on representations of valued quivers},  Math. Z. {\bf 245}
(2003), 183--199.

%\bibitem{DXZ} B. Deng, J. Xiao and M. Zhao, {\em Canonical bases and $q$-deformed Fock spaces},
%in preparation.

\bibitem{DR} V. Dlab and C.~M. Ringel, {\em Indecomposable
Representations of Graphs and Algebras}, Memoirs Amer. Math. Soc.,
no.~173, Amer. Math. Soc., Providence, 1976.

\bibitem{DF} P.~W. Donovan and M.~R. Freislich,
{\em The Representation Theory of Finite Graphs and Associated
Algebras}, Carleton Math. Lecture Notes, no.~5, 1973.


\bibitem{GR92} P. Gabriel and A.~V. Roiter, {\em
Representations of Finite-Dimensional Algebras}, with a chapter by
B. Keller, Encyclopaedia Math. Sci., no.~73, Algebra, VIII,
Springer-Verlag, Berlin, 1992.

\bibitem{GL} W. Geigle and H. Lenzing, Perpendicular categories with applications
to representations and sheaves, J. Algebra {\bf 144} (1991), 273--343.

\bibitem{Gr95} J.~A. Green, {\em Hall algebras, hereditary algebras and quantum groups},
Invent. Math. {\bf 120} (1995), 361--377.

%\bibitem{Guo} J.~Y. Guo, {\em The Hall polynomials of a cyclic serial algebra},
%Comm. Algebra {\bf 23} (1995), 743--751.

\bibitem{Ha} P. Hall, {\em The algebra of partitions}, in: {\em Proceedings of the 4th
Canadian Mathematical Congress, Banff 1957}, University of Toronto
Press, Toronto, 1959, pp.~147--159.

\bibitem{Hen} L. Hennecart, {\em Isotropic cuspidal functions in the Hall algebra of a quiver},
Int. Math. Res. Not. IMRN {\bf 2021}, no. 15, 11514--11564.

\bibitem{Hua} J. Hua, {\em Counting representations of quivers over finite fields},
J. Algebra {\bf 226} (2000), 1011--1033.

\bibitem{HX} J. Hua and J. Xiao, {\em On Ringel--Hall algebras of tame
hereditary algebras}, Algebr. Represent. Theory {\bf 5} (2002), 527--550.

\bibitem{Hub1} A. Hubery, {\em Symmetric functions and the center of
the Ringel--Hall algebra of a cyclic quiver}, Math. Z. {\bf 251} (2005), 705--719.

\bibitem{Hub2} A. Hubery, {\em Three presentations of the Hopf algebra
${\mathcal U}_v({\widehat{\frak{gl}}_n})$}, preprint, 2009.

\bibitem{Hub3} A. Hubery, {\em Ringel--Hall algebras of cyclic quivers}, S$\tilde{\rm a}$o
Paulo J. Math. Sci. 4 (2010), no. 3, 351--398.

\bibitem{Kac90} V. Kac, {\em Infinite Dimensional Lie Algebras}, 3rd ed.,
Cambridge University Press, 1990.

%\bibitem{Kan} S. J. Kang, {\em Quantum deformations of generalized
%Kac-Moody algebras and their modules}, J. Algebra {\bf 175} (1995), 1041--1066.

\bibitem{L90} G. Lusztig, {\em Canonical bases arising from quantized enveloping algebras},
J. Amer. Math. Soc. {\bf 3} (1990), 447--498.

\bibitem{L99} G. Lusztig, {\em Aperiodicity in quantum affine $\frak{gl}_n$},
Asian J. Math. {\bf 3}(1999), 147--177.

%\bibitem{L93} G. Lusztig, {\em Introduction to Quantum
%Qroups}, Progress in Mathematics 110, Birkh\"{a}user Boston, Inc., Boston, MA, 1993.

\bibitem{Ma-C} C. Ma, {\em Fourier transforms and Ringel--Hall algebras
of valued quivers}, Proc. Amer. Math. Soc. {\bf 149} (2021), no. 5, 1875--1887.

\bibitem{Ma} R. Ma, {\em Primitive elements in the Hall algebra of a cyclic
quiver}, arXiv:2403.18065.

\bibitem{Mac} I.~G. Macdonald,
{\em Symmetric Functions and Hall Polynomials}, 2nd ed., Clarendon
Press, Oxford, 1995.

\bibitem{Obul} A. Obul, {\em Minimal generating system of Ringel-Hall algebras of
affine valued quivers}, J. Algebra {\bf 297} (2006), 311--332.

\bibitem{Na} L.~A. Nazarova, {\em Representations of quivers of infinite type},
Math. USSR Izvestija Ser. Mat. {\bf 7}(1973), 752--791.

%\bibitem{Rein} M. Reineke, {\em Counting rational points of quiver moduli},
%Int. Math. Res. Not. 2006, Art. ID 70456, 19 pp.

\bibitem{R76} C.~M. Ringel, {\em Representations of K-species and bimodules},
J. Algebra {\bf 41} (1976), 269--302.

\bibitem{R84} C.~M. Ringel, {\em Tame Algebras and Integral Quadratic
Forms}, Lecture Notes in Mathematics, no.~1099, Springer-Verlag,
Berlin, 1984.

\bibitem{R90a} C.~M. Ringel, {\em Hall algebras},
in: {\em Topics in Algebra, Part 1}, S. Balcerzyk {\it et al.}
(eds.), Banach Center Publications, no.~26, 1988, pp.~433--447.

%\bibitem{R90b} C.~M. Ringel, {\em Hall polynomials for the
%representation-finite hereditary algebras}, Adv. Math. {\bf
%84}(1990), 137--178.

\bibitem{R90c} C.~M. Ringel, {\em Hall algebras and quantum groups},
Invent. Math. {\bf 101} (1990), 583--592.

\bibitem{R93a} C.~M. Ringel, {\em The composition algebra of a cyclic
quiver, Towards an explicit description of the quantum groups of
type $\tilde A_n$}, Proc. London Math. Soc. {\bf 66}(1993),
507--537.

\bibitem{R93} C.~M. Ringel,
{\em Hall algebras revisited}, in {\em Quantum Deformations of
Algebras and Their Representations}, A. Joseph \&\ S. Shnider
(eds.), Israel Mathematical Conference Proceedings, no.~7, Bar-Ilan
University, Bar-Ilan, 1993, pp.~171--176.

\bibitem{R94} C.~M. Ringel, {\em The braid group action on the set of exceptional
sequences of a hereditary artin algebra}, In: Contemp. Math. {\bf 171}, Amer. Math. Soc.,
Providence, 1994, pp. 339--352.

%\bibitem{R96} C.~M. Ringel, {\em Green's theorem on Hall algebras}, Representation theory
%of algebras and related topics (Mexico City, 1994), 185--245, CMS Conf. Proc., 19, Amer. Math. Soc.,
%Providence, RI, 1996.


\bibitem{SVd1} B. Sevenhant and M. Van den Bergh, {\em On the double of the
Hall algebra of a quiver}, J. Algebra {\bf 221} (1999), 135--160.

\bibitem{SVd2} B. Sevenhant and M. Van den Bergh, {\em A relation between
a conjecture of Kac and the structure of the Hall algebra},  J. Pure
Appl. Algebra {\bf 160} (2001), 319--332.

\bibitem{Sch} O. Schiffmann,
{\em The Hall algebra of a cyclic quiver and canonical bases of
Fock spaces}, Internat. Math. Res. Notices {\bf 8} (2000), 413--440.

\bibitem{Scho} A. Schofield, Semi-invariants of quivers, J. London Math. Soc. {\bf 43} (1991),
383--395.

\bibitem{St} E. Steinitz, {\em Zur Theorie der Abel'schen Gruppen},
Jahrsber. Deutsch. Math-Verein. {\bf 9}(1901), 80--85.

%\bibitem{X97} J. Xiao, {\em Drinfeld double and Ringel--Green theory of Hall algebras},
%J. Algebra {\bf 190} (1997), 100--144.

\bibitem{VV} M. Varagnolo and E. Vasserot, {\em On the
decomposition matrices of the quantized Schur algebra}, Duke Math.
J. {\bf 100}(1999), 267--297.

\bibitem{Zel} A. V. Zelevinsky, {\em Representations of Finite
Classical Groups}, Lecture Notes in Mathematics, no.~869,
Springer-Verlag, Berlin-New York, 1981.

\bibitem{Zh-p} P. Zhang, {\em Triangular decomposition of the composition algebra of
the Kronecker algebra}, J. Algebra {\bf 184} (1996), 159--174.

\bibitem{ZZG} P. Zhang, Y. Zhang and J. Guo {\em Minimal generators of Ringel--Hall
algebras of affine quivers}, J. Algebra {\bf 239} (2001), 675--204.

\end{thebibliography}
\end{document}